\newtheorem{theorem}{Theorem}[section]
\newtheorem{corollary}[theorem]{Corollary}
\newtheorem{lemma}[theorem]{Lemma}
\newtheorem{proposition}[theorem]{Proposition}
\newtheorem{question}[theorem]{Question}
\theoremstyle{definition}
\newtheorem{remark}[theorem]{Remark}
\newtheorem{definition}[theorem]{Definition}
\newtheorem{construction}[theorem]{Construction}
\newtheorem{example}[theorem]{Example}
\def\cF{{\mathcal F}}
\def\cS{{\mathcal S}}
\def\x{{{\bf x}}}
\def\bQ{{\mathbb Q}}
\def\bR{{\mathbb R}}
\def\bZ{{\mathbb Z}}
\def\del{{\partial}}
\def\Z{\mathbb{Z}}
\def\Q{\mathbb{Q}}
\def\T{\mathbb{T}}
\def\SS{\mathfrak{S}}
\def\CF {\widehat{\operatorname{CF}}}
\def\HF {\widehat{\operatorname{HF}}}
\def\CFK {\widehat{\operatorname{CFK}}}
\def\HFK {\widehat{\operatorname{HFK}}}
\def\spincs {\mathfrak{s}}
\newcommand{\RP}{\mathbb{R} \mathrm{P}}
\newcommand{\abs}[1] {\left\lvert #1 \right\rvert}
\newcommand{\gen}[1] {\langle #1 \rangle}
\def\Th{^{\text{th}}}
\def\minus{\smallsetminus}
\def\co{\colon\thinspace}
\def\conn{\mathbin{\#}}
  \DeclareMathOperator{\rank}{rank}
\DeclareMathOperator{\coker}{coker}  
\DeclareMathOperator{\Sym}{Sym} \DeclareMathOperator{\Spin}{Spin}
\DeclareMathOperator{\PD}{PD} 
\DeclareMathOperator{\sgn}{sgn} \DeclareMathOperator{\sign}{sign}
\DeclareMathOperator{\per}{per}
\definecolor{purple}{rgb}{1,0,1}
\definecolor{darkblue}{rgb}{0,0,0.5}
\definecolor{darkred}{rgb}{0.5,0,0}
\definecolor{darkgreen}{rgb}{0,0.5,0}
\begin{document}

\title[Strong Heegaard diagrams and strong L-spaces]%
{Strong Heegaard diagrams and strong L-spaces}

\author[Joshua Evan Greene]{Joshua Evan Greene}
\address{Department of Mathematics, Boston College \\ Chestnut Hill, MA 02467}
\email{joshua.greene@bc.edu}

\author[Adam Simon Levine]{Adam Simon Levine}
\address{Department of Mathematics, Princeton University \\ Princeton, NJ 08540}
\email{asl2@math.princeton.edu}

\thanks{JEG was supported by NSF grant DMS-1207812 and an Alfred P. Sloan Research Fellowship.  ASL was supported by NSF grants DMS-1004622 and DMS-1405378.}

\begin{abstract}
We study a class of 3-manifolds called strong L-spaces, which by definition admit a certain type of Heegaard diagram that is particularly simple from the perspective of Heegaard Floer homology.  We provide evidence for the possibility that every strong L-space is the branched double cover of an alternating link in the three-sphere.  For example, we establish this fact for a strong L-space admitting a strong Heegaard diagram of genus two via an explicit classification.  We also show that there exist finitely many strong L-spaces with bounded order of first homology; for instance, through order eight, they are connected sums of lens spaces.  The methods are topological and graph theoretic.  We discuss many  related results and questions.
\end{abstract}

\maketitle


\section{Introduction} \label{sec: introduction}

The purpose of this paper is to study a family of $3$-manifolds called {\em strong L-spaces}.  These manifolds are defined by a combinatorial condition on Heegaard diagrams, and they arise naturally in the context of Heegaard Floer homology.

In its simplest form, the Heegaard Floer homology \cite{OSz3Manifold} of a closed, oriented $3$-manifold $Y$ is a finitely generated abelian group $\HF(Y)$, defined as follows. We present $Y$ by means of a Heegaard diagram $H$, consisting of a closed, oriented surface $S$ of genus $g$ and two disjoint unions of embedded circles in $S$, $\alpha = \alpha_1 \cup \dots \cup \alpha_g$ and $\beta = \beta_1 \cup \dots \cup \beta_g$, each of which spans a $g$-dimensional subspace of $H_1(S;\Z)$ and which intersect each other transversally. To such a diagram, we associate a chain complex $\CF(H)$, which is freely generated by the set $\SS(H)$ of unordered $g$-tuples of points in $S$ with one point on each $\alpha$ circle and one point on each $\beta$ circle.
By adapting the machinery of Lagrangian Floer homology, Ozsv\'ath and Szab\'o define a differential $\partial$ on $\CF(H)$ that also depends on some additional choices of analytic data.  They prove that the homology $H_*(\CF(H),\partial)$ depends only on the $3$-manifold $Y$ and not on the specific choice of Heegaard diagram or analytic data.  This homology group is denoted $\HF(Y)$.


Define the \emph{determinant} $\det(Y)$ of a $3$-manifold $Y$ to be the order of $H_1(Y;\Z)$ if this group is finite (i.e. when $Y$ is a rational homology sphere) and $0$ otherwise.
With respect to a natural $\Z/2$--grading, the Euler characteristic of $\CF(Y)$ is equal to $\det(Y)$ \cite{OSzProperties}. As a result, for any Heegaard diagram $H$ presenting $Y$, we have
\begin{equation}
\label{eq: CF-HF-det}
\abs{\SS(H)} = \rank \CF(H) \ge \rank \HF(Y) \ge \det(Y).
\end{equation}
If $\det(Y) \ne 0$ and $\HF(Y)$ is free abelian of rank $\det(Y)$, $Y$ is called an \emph{L-space}.  In view of \eqref{eq: CF-HF-det}, such manifolds can be seen as having the simplest possible Heegaard Floer homology.\footnote{Some authors define $Y$ to be an L-space under the weaker condition that $\rank \HF(Y) = \det(Y)$, which can be easier to verify. We refer to such a $Y$ as an \emph{L-space mod torsion}. In fact, there is no known example of a rational homology sphere $Y$ for which $\HF(Y)$ contains torsion; the two definitions may be equivalent.} Examples of L-spaces include $S^3$, lens spaces (whence the name), all manifolds with finite fundamental group, and branched double covers of non-split alternating (or, more generally, quasi-alternating) knots and links in $S^3$ \cite{OSzDouble}. The classification of L-spaces is one of the major outstanding questions in Heegaard Floer theory. For instance, it is conjectured that a rational homology sphere $Y$ is an L-space if and only if $\pi_1(Y)$ is not left-orderable.  This conjecture is known to hold (at least mod torsion) for many classes of manifolds, including all geometric, non-hyperbolic manifolds \cite{BoyerGordonWatson}.

The minimum size of $\SS(H)$, ranging over all Heegaard diagrams $H$ for a rational homology sphere $Y$, may be viewed as a measure of the topological complexity of $Y$.  This quantity is called the \emph{simultaneous trajectory number} of $Y$ and is denoted $M(Y)$ \cite[Section 1.2]{OSzProperties}.  Both $\det(Y)$ and $\rank(\HF(Y))$ provide lower bounds on $M(Y)$ by \eqref{eq: CF-HF-det}. The second author and Lewallen \cite{LevineLewallen} introduced the following definition:
\begin{definition}
\label{d: SLS}
A closed, oriented $3$-manifold $Y$ is called a \emph{strong L-space} if $M(Y) = \det(Y)$.  A Heegaard diagram $H$ for $Y$ for which $\abs{\SS(H)} = \det(Y)$ is called a \emph{strong Heegaard diagram}.
\end{definition}
\noindent
By \eqref{eq: CF-HF-det}, a strong L-space is an L-space. In view of the conjecture mentioned above, the second author and Lewallen proved that the fundamental group of a strong L-space is not left-orderable \cite{LevineLewallen}. As a simple direct consequence, a strong L-space cannot admit an $\bR$-covered taut foliation.  In fact, Ozsv\'ath and Szab\'o established the much deeper result that an L-space does not admit any co-orientable taut foliation \cite{KazezRoberts,OSzGenus}.

Our motivating problem is to describe the homeomorphism types of strong L-spaces and strong Heegaard diagrams. As we shall see, the condition of being a strong L-space is quite restrictive, more so than being an L-space. For instance, the only strong L-space with determinant $1$ is $S^3$, so the Poincar\'e homology sphere is an L-space but not a strong L-space \cite{OSzProperties,LevineLewallen}.

The standard genus-$1$ Heegaard diagram for a lens space $L(p,q)$ (consisting of a single $\alpha$ curve and a single $\beta$ curve on a torus, intersecting $p$ times) is clearly a strong diagram, so lens spaces are strong L-spaces.  A broader source of examples derives from work of the first author, who showed that the double cover of $S^3$ branched along a non-split alternating link is a strong L-space \cite[Corollary 3.5]{GreeneSpanning}.  Note that these spaces subsume lens spaces, which are branched double covers of two-bridge links, although the strong diagrams for these spaces in \cite{GreeneSpanning} do not have genus $1$. Although we can generate many families of strong Heegaard diagrams, we were unable to produce any new examples of strong L-spaces besides the ones just mentioned.  Indeed, our main results support an affirmative answer to the following question:

\begin{question}
\label{q: alternating}
Is every strong L-space the branched double cover of an alternating link in $S^3$?
\end{question}

As a first approach to Question \ref{q: alternating}, recall that the determinant of a link $L \subset S^3$ is the absolute value of its (single-variable) Alexander polynomial evaluated at $-1$: $\det(L) = \abs{\Delta_{L}(-1)}$. Let $\Sigma(L)$ denote the double cover of $S^3$ branched over $L$.  Then $\det(L) = \det(\Sigma(L))$, in part justifying our terminology.  A classical theorem of Bankwitz and Crowell asserts that there exist finitely many alternating links of bounded determinant \cite{BankwitzAlternating,CrowellNonalternating}.  Therefore, an affirmative answer to Question \ref{q: alternating} would imply the same about strong L-spaces.  We deduce this fact directly as a corollary of the following result, which we prove using topological and graph-theoretic methods:

\begin{theorem}
\label{t: finite}
There exist finitely many rational homology spheres with bounded simultaneous trajectory number.
\end{theorem}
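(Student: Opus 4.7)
The plan is to show that any rational homology sphere $Y$ with $M(Y) \leq N$ admits a Heegaard diagram of combinatorial complexity bounded purely in terms of $N$, and then to conclude from the standard finiteness of closed $3$-manifolds admitting data of bounded size. I would begin by fixing a Heegaard diagram $H = (\Sigma, \alpha, \beta)$ for $Y$ realizing $|\SS(H)| = M(Y) \leq N$, and forming the $g \times g$ non-negative integer matrix $A$ with entries $A_{ij} = |\alpha_i \cap \beta_j|$, so that $\per(A) = |\SS(H)| \leq N$ and the signed version of $A$ has absolute determinant $\det(Y) \leq N$.

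Next I would reduce $H$ via moves that preserve or decrease $|\SS(H)|$. First, isotope each pair $(\alpha_i, \beta_j)$ to minimize its geometric intersection, removing bigons and only decreasing entries of $A$. Then perform \emph{generalized destabilizations}: whenever some row of $A$ has a unique nonzero entry $A_{ij} = 1$, handleslide every other $\alpha_k$ over $\alpha_i$ to kill its intersections with $\beta_j$. Because $\alpha_i$ meets only $\beta_j$, this modifies only column $j$; Laplace expansion along row $i$ gives $\per(A) = A_{ij} \cdot \per(A_{\hat{i}, \hat{j}}) = \per(A_{\hat{i}, \hat{j}})$, and the right side is independent of column $j$, so $\per$ is preserved. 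A subsequent standard destabilization lowers the genus by one. Analogous moves handle unique row-entries of size $k \geq 2$; each extracts a lens-space connect summand $L(k, \cdot)$ and contributes a factor $k$ to $\det(Y)$, so only boundedly many such moves can occur (since the total factor is at most $\det(Y) \leq N$), and each such $k$ is itself at most $N$. After these exhaustive reductions I arrive at a reduced diagram $H'$ with $\per \leq N$, no bigons, and every row and column of its intersection matrix $A'$ having at least two nonzero entries.

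The core step is to bound the genus $g'$ and total intersection number $n'$ of $H'$ in terms of $N$. Let $G$ denote the bipartite multigraph whose adjacency matrix is $A'$: by construction $G$ has minimum degree at least $2$, every edge lies in some perfect matching, and the number of perfect matchings is $\per(A') \leq N$. A purely graph-theoretic bound is impossible here, since the $2g$-cycle multigraph has permanent $2$ for every $g$; the embedding of $\alpha \cup \beta$ in the Heegaard surface $\Sigma_{g'}$ must be brought in. Applying Euler's formula to the cell decomposition of $\Sigma_{g'}$ cut out by $\alpha \cup \beta$ yields exactly $n' - 2g' + 2$ complementary regions, and I would argue that once $g'$ is sufficiently large, at least one such region must have a short boundary — a bigon, triangle, or square — triggering a further isotopy or handleslide reducing the diagram, contradicting the assumption that all reductions have been exhausted. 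Iterating this forces $g' + n' \leq F(N)$ for some explicit $F$, yielding only finitely many combinatorial types of $H'$ and hence finitely many $Y$.

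The main obstacle is this combinatorial-topological inequality: the permanent bound is too weak on its own, so the argument must extract genuine surface-topological input from the region structure of the diagram to rule out cycle-type obstructions. I expect it to proceed along the lines of the region-counting and spanning-surface techniques central to alternating-link theory and to the first author's earlier work on strong Heegaard diagrams.
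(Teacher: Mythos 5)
Your reduction phase is close in spirit to the paper's, but the core step --- bounding the genus and total intersection number of the reduced diagram --- is exactly the part you leave unproven, and the mechanism you sketch for it would not work. In a Heegaard diagram the complementary regions of $\alpha \cup \beta$ have boundary arcs alternating between $\alpha$ and $\beta$, so every region has an even number of sides: there are no triangles, and square regions are ubiquitous and trigger no reduction (in the standard genus-$1$ diagram of $L(p,q)$ \emph{every} region is a square). So ``large genus forces a short region forcing a further move'' cannot be the engine of the proof as stated. Moreover, your reduced condition (at least two nonzero entries in each row and column of $A'$) still admits the $2g$-cycle pattern you yourself identify, and you have not arranged $1$-extendibility, i.e.\ that every intersection point lies in some generator, which is what any lower bound on the number of perfect matchings in terms of edges would require.

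The paper closes precisely this gap by pushing the reduction one step further, after which a purely graph-theoretic bound \emph{does} suffice --- so your claim that graph theory alone cannot work is mistaken at the level of reduction the paper reaches. First, Proposition \ref{p: 1-extendible} (via Hetyei's theorem and the splitting Lemma \ref{l: connected sum}) produces a $1$-extendible diagram with the same number of generators. Second, Lemma \ref{l: decompose} handles any curve with at most two intersection points, including one meeting two \emph{different} curves once each: a generator-count-preserving sequence of handleslides splits off a genus-$1$ summand ($S^3$, destabilized away, or $\RP^3$). Iterating (Lemma \ref{l: standard form}) yields $H = (\conn^n H_0) \conn H''$ with $M(Y) = 2^n \abs{\SS(H'')}$ and minimum degree at least $3$ in the intersection multigraph of $H''$ --- this is exactly what disposes of your cycle-graph obstruction. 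Then the Lov\'asz--Plummer bound for matching-covered bipartite graphs, $d \ge m - n + 2$ where $d$ is the number of perfect matchings, combined with $m \ge 3n/2$ from minimum degree $3$, gives $n \le 2(d-2)$ and $m \le 3(d-2)$, so there are finitely many possible intersection graphs; $n \le \log_2 d$ bounds the number of $\RP^3$ summands; and a soft plumbing argument (Lemma \ref{l: finite homeo}) shows each graph underlies only finitely many diagrams up to homeomorphism. No Euler-characteristic or region-counting input is needed beyond this.
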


\begin{corollary}
\label{c: finite}
There exist finitely many strong L-spaces with bounded determinant. \qed
\end{corollary}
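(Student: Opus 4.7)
The plan is very short: the corollary is an immediate consequence of Theorem~\ref{t: finite} combined with the definition of a strong L-space.

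First I would unpack the definition. By Definition~\ref{d: SLS}, a closed oriented $3$-manifold $Y$ is a strong L-space precisely when it is a rational homology sphere (so that $\det(Y)$ is finite and nonzero) and its simultaneous trajectory number satisfies $M(Y) = \det(Y)$. Thus an upper bound on $\det(Y)$ translates directly into the same upper bound on $M(Y)$.

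Next I would invoke Theorem~\ref{t: finite}, which asserts that there are only finitely many rational homology spheres (up to homeomorphism) with bounded simultaneous trajectory number. Fixing any constant $N$ and letting $\mathcal{L}_N$ denote the set of strong L-spaces $Y$ with $\det(Y) \le N$, each such $Y$ satisfies $M(Y) = \det(Y) \le N$ and is a rational homology sphere, so $\mathcal{L}_N$ is contained in the finite set supplied by Theorem~\ref{t: finite}. Hence $\mathcal{L}_N$ is itself finite, which is exactly the content of Corollary~\ref{c: finite}.

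There is no real obstacle here; the only thing to check is that the hypothesis "strong L-space" does force $\det(Y)$ to equal $M(Y)$, which is literally the definition, and that $\det(Y)$ is finite, which is built into Definition~\ref{d: SLS} via the requirement that $M(Y) = \det(Y) \ne 0$. All of the substantive work lies upstream in the proof of Theorem~\ref{t: finite}; the corollary itself is a one-line deduction, which is why the statement in the excerpt is marked \qed.
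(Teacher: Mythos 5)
Your proposal is correct and matches the paper exactly: the corollary is stated with \qed precisely because it is the immediate deduction you give, namely that $M(Y) = \det(Y)$ for a strong L-space, so a bound on the determinant bounds the simultaneous trajectory number and Theorem \ref{t: finite} applies. Nothing further is needed.
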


\noindent

By contrast, there exist infinitely many irreducible L-spaces with the same determinant.  Reducible examples are easy to exhibit, since Heegaard Floer homology satisfies a K\"unneth principle for connected sums.  Thus, for example, the connected sum of any L-space with arbitrarily many copies of the Poincar\'e sphere is an L-space with the same determinant.  For irreducible examples, the Seifert fibered spaces of type $(2,2,n)$ all have determinant $4$ and finite fundamental group, so they are L-spaces. (It is unknown whether there exist infinitely many irreducible L-spaces with determinant less than $4$.) Additionally, the first author and Watson \cite{GreeneWatsonQA} gave an infinite family of hyperbolic manifolds with determinant $25$ that are L-spaces mod torsion.

Using similar techniques to those in the proof of Theorem \ref{t: finite}, we prove:

\begin{theorem} \label{t: le8}
If $Y$ is a strong L-space with $\det(Y) \le 8$, then $Y$ is the branched double cover of an alternating link.  Specifically, $Y$ is a connected sum of lens spaces.
\end{theorem}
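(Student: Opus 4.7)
The plan is to combine the graph-theoretic framework of the proof of Theorem~\ref{t: finite} with strong induction on $d = \det(Y)$. The base case $d=1$ forces $Y = S^3$, as noted in the introduction.

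For the inductive step, I first dispose of the reducible case. If $Y$ admits an essential $2$-sphere, write $Y = Y_1 \conn Y_2$ nontrivially. Gluing Heegaard diagrams along a small disk near an intersection point is multiplicative on both $|\SS(H)|$ and $\det$, and every Heegaard diagram for $Y_1 \conn Y_2$ may be reduced (via standard moves) to one obtained by such a gluing. Consequently a strong diagram for $Y$ yields strong diagrams for each summand, realizing each $Y_i$ as a strong L-space with $\det(Y_i) < d$; induction then decomposes $Y$ as a connected sum of lens spaces.

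Assume therefore that $Y$ is irreducible. The main claim becomes: every irreducible strong L-space with $\det(Y) \le 8$ is a lens space. My approach is to bound the Heegaard genus $g$ of a strong diagram $H$ for $Y$ from above in terms of $d$. I would argue that for an irreducible strong diagram of genus $g$, one has $|\SS(H)| \ge f(g)$ for a function with $f(3) \ge 9$, using a reduction argument: find an $\alpha$-curve meeting some $\beta$-curve exactly once (or otherwise exhibit local structure that allows a destabilization), and track how the permanent of the intersection matrix changes under the reduction, showing it cannot decrease too fast while preserving strongness. Once $g \le 2$, the genus-$2$ classification promised in the introduction identifies $Y$ as the branched double cover of an alternating link, and the hypothesis $d \le 8$ further restricts this link to be two-bridge, so $Y$ is a lens space; the $g \le 1$ cases are lens spaces directly.

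The main obstacle is the genus bound. Naive estimates of the permanent in terms of matrix size are far too weak, since an intersection matrix can be arbitrarily large while its permanent stays at $1$. What is required is a sharp lower bound on $\per(M)$ that exploits both the strongness identity $\per(M) = |\det(\pm M)|$, which forbids extensive cancellation among signed matchings, and irreducibility, which forbids the block-diagonal structure in $M$ corresponding to connected sums. Identifying the correct graph-theoretic invariant that captures both constraints simultaneously, and pushing it through to rule out irreducible strong diagrams of genus $\ge 3$ when $d \le 8$, is where I expect the real work to lie.
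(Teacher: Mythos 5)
Your proposal has two genuine gaps, and they sit exactly where the paper's actual work is done. First, the reducible case: you assert that any Heegaard diagram for $Y_1 \conn Y_2$ can be converted by standard moves into a reducible diagram obtained by gluing, and hence that a strong diagram for $Y$ yields strong diagrams for the summands. Haken's theorem does give a reducible diagram after isotopies and handleslides, but whether this can be done \emph{without increasing} $\abs{\SS(H)}$ — equivalently, whether the summands of a strong L-space are strong L-spaces — is precisely the open problem raised in Question \ref{q: summands} of this paper; you cannot use it. The paper avoids topological reducibility altogether: its induction is on the genus of the diagram, and it only splits when the diagram is \emph{diagrammatically} reducible, which Lemma \ref{l: decompose} guarantees exactly when some attaching curve has at most two intersection points (a degree-$\le 2$ vertex of $G(H)$).

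Second, the irreducible case: the step you explicitly defer — ruling out strong diagrams of genus $\ge 3$ with at most $8$ generators once no small-degree curve is available — is the heart of the theorem, and your proposed mechanism (find a curve meeting another once, destabilize, track the permanent) cannot work in that regime, since after applying Proposition \ref{p: 1-extendible} and Lemma \ref{l: decompose} one is reduced to diagrams in which every vertex of $G(H)$ has degree at least $3$ (and no degree-$3$ vertex has parallel edges), so no such destabilization exists. The paper instead argues: for $g=3$, the P\'olya matrix $M(H)$ must dominate one of three explicit matrices, forcing $\abs{\SS(H)} = \per\abs{M(H)} \ge 16$; for $g=4$ and $g\ge 5$, the edge bound $\abs{\SS(H)} \ge m-n+2$ from Lov\'asz--Plummer combined with Voorhoeve's theorem (Lemma \ref{l: voorhoeve}, at least $9$ perfect matchings in a cubic bipartite graph on $\ge 8$ vertices) produces a cubic subgraph and a contradiction. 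Note also that one cannot hope to bound the genus of a strong diagram by the Heegaard genus of $Y$ (cf.\ Example \ref{ex: borromean}), so a matching-count argument of this kind is unavoidable. Your endgame (genus $\le 2$, Theorem \ref{t: genus2}, and $\det \le 8$ forcing a connected sum of two-bridge links via the Bankwitz--Crowell determinant/crossing-number inequality) does agree with the paper, but as written the proposal leaves both the reducible reduction and the genus bound unestablished.
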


\begin{figure}
\centering
\includegraphics[width=2in]{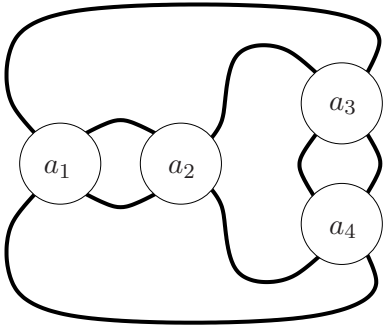}
\put(-129,58){$a_1$}
\put(-82,58){$a_2$}
\put(-21,82){$a_3$}
\put(-21,35){$a_4$}
\caption{Diagram of the link $L(a_1,a_2,a_3,a_4)$, $a_1,a_2,a_3,a_4 \in \Q \cup \{1/0\}$.}  \label{f:branchset}
\end{figure}

In another direction towards Question \ref{q: alternating}, we describe all strong L-spaces that admit strong Heegaard diagrams of genus 2. For $a_1, a_2, a_3, a_4 \in \Q \cup \{1/0\}$, let $L(a_1, a_2, a_3, a_4)$ denote the link presented by the the diagram in Figure \ref{f:branchset}, where the four balls are filled in with the rational tangles specified by $a_1, a_2, a_3, a_4$ (see Section \ref{ss:tangles}). Note that the diagram is alternating iff all $a_i$ have the same sign, where $0/1$ and $1/0$ are considered to have both signs. Setting $a_1 = a_2 = a_3 = a_4 = \pm1$ results in the minimal diagram $D$ of the figure eight knot, so the diagram can be regarded as substituting arbitrary rational tangles for the crossings in $D$. The branched double cover $\Sigma(L(a_1,a_2,a_3,a_4))$ is either
\begin{inparaenum}
\item a connected sum of one or two genus-1 manifolds,
\item a small Seifert fibered space, or
\item a graph manifold whose JSJ decomposition consists of two Seifert fibered spaces over $D^2$ with two exceptional fibers.
\end{inparaenum}

\begin{theorem} \label{t: genus2}
Let $Y$ be a strong L-space that admits a strong Heegaard diagram of genus $2$. Then for some $a_1, \dots, a_4 \in \Q \cup \{1/0\}$ of the same sign, $Y \cong \Sigma(L(a_1,a_2,a_3,a_4))$. In particular, $Y$ is the branched double cover of an alternating link in $S^3$.
\end{theorem}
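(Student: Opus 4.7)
The plan is to classify genus-2 strong Heegaard diagrams combinatorially and recognize each class as a branched double cover of a specified alternating link.

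\textbf{Algebraic constraints.} Let $H=(S,\alpha_1,\alpha_2,\beta_1,\beta_2)$ be a strong Heegaard diagram of genus 2 for $Y$, let $m_{ij}=|\alpha_i\cap\beta_j|$ denote the geometric intersection numbers, and let $\hat m_{ij}$ denote the corresponding signed intersection numbers. Then
\[
|\SS(H)|=m_{11}m_{22}+m_{12}m_{21}\quad\text{and}\quad\det(Y)=|\hat m_{11}\hat m_{22}-\hat m_{12}\hat m_{21}|.
\]
The strong equality $|\SS(H)|=\det(Y)$ combined with $|\hat m_{ij}|\le m_{ij}$ forces $|\hat m_{ij}|=m_{ij}$ for all $i,j$ and forces the two products in the determinant to have opposite signs. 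In particular, each pair $(\alpha_i,\beta_j)$ is coherently oriented at all of its intersections, so the curves of $H$ are already in bigon-free minimal position on $S$.

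\textbf{Region decomposition.} The complement $S\setminus(\alpha\cup\beta)$ decomposes into polygons whose boundary edges alternate between $\alpha$- and $\beta$-arcs, so each face has even valence $\ge 4$. With $V=\sum_{i,j}m_{ij}$ intersection points, $E=2V$ edges, and $F=V-2$ faces (from $\chi(S)=-2$), double-counting edge-face incidences yields
\[
\sum_{k\ge 4,\ k\text{ even}}(k-4)\,F_k=8.
\]
Combined with the coherent-sign condition, this identity lets me enumerate the possible distributions of face sizes and then classify how the two $\beta$-curves can sit in the four-holed sphere $S\setminus\alpha$ (and symmetrically how the $\alpha$-curves sit in $S\setminus\beta$). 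The target conclusion is that $\alpha\cup\beta$, up to homeomorphism of $S$, is the union of four parallel arc bundles arranged in the combinatorial pattern of the figure-eight diagram of Figure \ref{f:branchset}.

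\textbf{Recovery of the branched double cover.} In the resulting normal form, the four arc bundles are parameterized by their twist slopes $a_1,\ldots,a_4\in\Q\cup\{1/0\}$, and the coherent-sign condition forces these slopes all to share a sign, whence the diagram $L(a_1,\ldots,a_4)$ is alternating. It then remains to match $H$ with the strong genus-$2$ Heegaard diagram produced by the first author's construction \cite{GreeneSpanning} from the alternating diagram of $L(a_1,\ldots,a_4)$, which gives $Y\cong\Sigma(L(a_1,\ldots,a_4))$. Equivalently, one exhibits the hyperelliptic involution of $S$ and checks that it extends to an involution of $Y$ whose quotient is $S^3$ with branch set $L(a_1,\ldots,a_4)$.

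\textbf{Main obstacle.} The technical heart is the face-pattern classification: the identity $\sum(k-4)F_k=8$ admits many nonnegative integer solutions, and the coherent-sign condition together with the genus-$2$ topology of $S$ must be leveraged via case analysis — including pigeonhole bounds on the multiplicities $m_{ij}$ and a careful enumeration of how $\beta$-arcs can connect the four boundary circles of $S\setminus\alpha$ — to eliminate all but the single ``four parallel bundles'' model. This is where the paper's graph-theoretic machinery will be deployed most intensively.
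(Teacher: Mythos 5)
Your outline identifies the right strategy in broad strokes, but the argument has a genuine gap: the combinatorial classification that you defer to the ``main obstacle'' paragraph \emph{is} the proof, and you give no argument for it. Asserting that the identity $\sum(k-4)F_k=8$ plus coherence ``must be leveraged via case analysis'' to force the four-parallel-arc-bundle model is exactly the content of the paper's Proposition \ref{p: delta curves}, whose proof is a delicate, several-page analysis --- carried out not on $S\setminus(\alpha\cup\beta)$ but on $S\setminus(\alpha_1\cup\beta_1)$, where coherence forces every region to have nonpositive Euler measure, so the complement is rectangles plus either two octagons, a $12$-gon, an annulus, or a punctured torus; one must then rule out the octagon and punctured-torus cases, prove that arcs crossing the rectangular regions re-enter through the opposite side (Lemma \ref{l: 12gon identifications}), and invoke a normal-form lemma for coherent multicurves in an annulus (Lemma \ref{l: annulus}) to extract the separating curves $\delta_1,\delta_2$. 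None of this is supplied or even sketched in your proposal, and your version of the count is moreover not yet correct as stated: $F=V-2$ and the resulting identity assume every complementary region is a disk, which fails for genuine strong diagrams (e.g.\ near a connected-sum neck, or in the annulus/punctured-torus cases that the paper must explicitly analyze).

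A second, smaller but real gap is your claim that strongness forces $|\hat m_{ij}|=m_{ij}$ for \emph{all} $i,j$. Equality in $|\hat m_{11}\hat m_{22}-\hat m_{12}\hat m_{21}|\le m_{11}m_{22}+m_{12}m_{21}$ only forces coherence of a pair when the complementary product is nonzero; the paper's Section \ref{sec: preliminaries} gives an explicit strong genus-$2$ diagram with $\alpha_1\cap\beta_2=\emptyset$ and $\alpha_2\cap\beta_1$ incoherent. So the degenerate cases (some $m_{ij}=0$, or $m_{ij}=1$) are not covered by your coherence/bigon-free reduction; handling them requires modifying the diagram by handleslides and isotopies (Proposition \ref{p: 1-extendible}, Lemma \ref{l: connected sum}) to reduce to connected sums of lens spaces or to destabilize, which uses the rational homology sphere hypothesis and is not automatic. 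Finally, your last step (matching the normal form to $\Sigma(L(a_1,\dots,a_4))$ via the hyperelliptic involution or via \cite{GreeneSpanning}) is plausible but also unexecuted; the paper instead proves this directly by building the Heegaard splitting of the branched double cover from the tangle decomposition (Proposition \ref{p: hd}). As it stands, the proposal is a plan whose hardest steps remain to be done.
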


\noindent
Usui \cite{UsuiSmoothing1, UsuiSmoothing2} has obtained similar results.

%
A key ingredient for proving Theorems \ref{t: finite} and \ref{t: genus2} is Proposition \ref{p: 1-extendible}, which implies that every strong L-space admits a Heegaard diagram $H$ that is both strong and \emph{1-extendible}. In such a diagram, the signs with which the $\alpha$ and $\beta$ curves may intersect are highly constrained: all points of intersection between any two curves have the same sign, and the associated intersection matrix is  a P\'olya matrix (see Section \ref{sec: preliminaries}). Theorems \ref{t: finite} and \ref{t: le8} then follow from topological and graph theoretic arguments, and their proofs appear in Section \ref{sec: finiteness}. To prove Theorem \ref{t: genus2}, we show in Section \ref{sec: genus2} that every strong, 1-extendible Heegaard diagram of genus 2 has a standard form, which coincides precisely with a particular class of Heegaard diagrams for $\Sigma(L(a_1,a_2,a_3,a_4))$ for $a_1, a_2, a_3, a_4$ of the same sign.

In Section \ref{sec: simpleknots}, we prove some results concerning Floer simple knots in strong L-spaces that admit genus-2 strong diagrams.  The existence of such knots has applications to Dehn surgery and minimal genus problems. In Section \ref{sec: waves}, we discuss the connections between strong L-spaces and other notions pertaining to Heegaard splittings.  We close in Section \ref{sec: questions} with several questions motivated by the present work.

%
%

\subsection*{Acknowledgments}
We extend our foremost thanks to John Luecke, who had a great influence on this work. We also warmly thank Cameron Gordon, Sam Lewallen, and Nathan Dunfield for enjoyable, stimulating discussions.

\section{Preliminaries} \label{sec: preliminaries}

For an oriented, properly embedded curve $\alpha$ in a surface (possibly with boundary), let $-\alpha$ denote the same curve with the opposite orientation, and for any integer $n$, let $n \alpha$ denote a multi-curve obtained by taking $|n|$ parallel copies of $\sgn(n) \alpha$.  For oriented multi-curves $\alpha$ and $\beta$ that meet transversally, let $\alpha + \beta$ denote an oriented multi-curve obtained from $\alpha \cup \beta$ by forming the oriented resolution at every point of $\alpha \cap \beta$.  Note that $n \alpha$ and $\alpha + \beta$ specify unique multi-curves up to isotopy.  For oriented multi-curves $\alpha, \beta$ in an oriented surface $S$ that meet transversally, write $\alpha \cdot \beta$ for their algebraic intersection number and $\abs{\alpha \cap \beta}$ for their geometric intersection number. The multi-curves $\alpha$ and $\beta$ intersect \emph{coherently} if all intersection points have the same sign, i.e., if $\abs{\alpha \cdot \beta} = \abs{\alpha \cap \beta}$.

A Heegaard diagram $H$ consists of a surface $S$, a basepoint $z$, two collections of attaching curves $\alpha = \alpha_1 \cup \cdots \cup \alpha_g$ and $\beta = \beta_1 \cup \cdots \cup \beta_g$ specifying a pair of handlebodies, and a choice $o$ of orientation on $S$ and each $\alpha$ and $\beta$ curve:
\[
(S,z,\alpha,\beta,o).
\]
Henceforth, we generally suppress reference to the basepoint $z$ and orientation $o$, and simply refer to the diagram as $(S,\alpha,\beta)$.

Let $\Sym^g(S)$ denote the $g\Th$ symmetric power of $S$, the quotient of $S^{\times g}$ by the action of the symmetric group on $g$ letters.  It is a $2g$-dimensional manifold. Let $\T_\alpha$ (resp.~$\T_\beta$) be the image in $\Sym^g(S)$ of $\alpha_1 \times \cdots \times \alpha_g$ (resp.~$\T_\beta$); this is an embedded $g$-dimensional torus. The tori $\T_\alpha$ and $\T_\beta$ intersect transversally in a finite number of points; let $\SS(H) = \T_\alpha \cap \T_\beta$. A point of $\SS(H)$ is a tuple $\x = (x_1, \dots, x_g)$, where $x_i \in \alpha_i \cap \beta_{\sigma_{\x}(i)}$, for some permutation $\sigma_\x$. Elements of $\SS(H)$ are \emph{generators} of the group $\CF(H)$ discussed above.



For each $x \in \alpha_i \cap \beta_j$, let $\eta(x) \in \{\pm 1\}$ denote the local sign of intersection of $\alpha_i$ with $\beta_j$ at $x$. For $\x = (x_1, \dots, x_g) \in \SS(H)$, the local sign of intersection of $\T_\alpha$ and $\T_\beta$ is given by
\[
\eta(\x) = \sign(\sigma_\x) \prod_{i=1}^g \eta(x_i)
\]
where $\sign(\sigma_\x) \in \{\pm1\}$ is the signature of the permutation $\sigma_\x$.
Note that changing the orientation of $S$ or a single $\alpha$ or $\beta$ curve negates $\eta(\x)$ for all $\x \in \SS(H)$.

The {\em intersection matrix} $M(H)$ is the $g \times g$ matrix of integers whose $(i,j)$ entry is
\[
m_{i,j} = \alpha_i \cdot \beta_j = \sum_{x \in \alpha_i \cap \beta_j} \eta(x).
\]
Since $M(H)$ is a presentation matrix for $H_1(Y)$, we have $\det(Y) = \abs{\det(M(H))}$, which explains our choice of terminology. The permutation expansion of the determinant gives:
\begin{align*}
\det (M(H)) &= \sum_\sigma \sign(\sigma) m_{1,\sigma(1)} \cdots m_{g, \sigma(g)} \\
&= \sum_\sigma \sign(\sigma) \sum_{\substack{ \{(x_1, \dots, x_g) \mid \\ x_i \in \alpha_i \cap \beta_{\sigma(i)}\}}} \prod_{i=1}^g \eta(x_i) \\
&= \sum_{\x \in \SS(H)} \eta(\x).
\end{align*}
In particular, we see that
\[
\abs{\SS(H)} \ge \det(Y),
\]
as noted in the Introduction. The following two lemmas are immediate:

\begin{lemma}
\label{l: samesign}
$H$ is a strong Heegaard diagram if and only if all generators in $\SS(H)$ have the same sign. \qed
\end{lemma}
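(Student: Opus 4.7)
The lemma is essentially a one-line consequence of the computation just displayed in the excerpt, combined with the triangle inequality. I would organize it as follows.

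Starting from the identity
\[
\det(M(H)) = \sum_{\x \in \SS(H)} \eta(\x),
\]
and using that $\eta(\x) \in \{\pm 1\}$ for every $\x$, the plan is to apply the triangle inequality:
\[
\det(Y) \;=\; \abs{\det(M(H))} \;=\; \Bigl|\sum_{\x \in \SS(H)} \eta(\x)\Bigr| \;\le\; \sum_{\x \in \SS(H)} \abs{\eta(\x)} \;=\; \abs{\SS(H)}.
\]
This recovers the inequality $|\SS(H)| \geq \det(Y)$ noted above. The key observation is that equality in the triangle inequality, applied to a sum of $\pm 1$ values, holds if and only if all summands share a common sign.

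Thus I would conclude: $H$ is strong, i.e.\ $|\SS(H)| = \det(Y)$, precisely when the triangle inequality above is an equality, which happens precisely when all $\eta(\x)$ for $\x \in \SS(H)$ agree in sign. There is no real obstacle — the content is entirely contained in the permutation expansion of the determinant carried out immediately before the lemma, so the proof is just an explicit invocation of that formula followed by the triangle inequality. No additional machinery or case analysis is needed.
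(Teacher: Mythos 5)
Your argument is correct and is exactly the reasoning the paper has in mind: the lemma is stated as an immediate consequence of the permutation expansion $\det(M(H)) = \sum_{\x \in \SS(H)} \eta(\x)$, and making the equality case of the triangle inequality explicit, as you do, is the whole content. No gaps; this matches the paper's (omitted) proof.
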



\begin{lemma} \label{l: coherent}
If $H$ is a strong Heegaard diagram, and some generator in $\SS(H)$ includes a point of $\alpha_i \cap \beta_j$, then $\alpha_i$ and $\beta_j$ intersect coherently. \qed
\end{lemma}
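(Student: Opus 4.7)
The plan is to argue by contradiction using a single local swap of one coordinate of the generator $\x$. Suppose $\x = (x_1, \dots, x_g) \in \SS(H)$ with $x_i \in \alpha_i \cap \beta_j$, so $\sigma_\x(i) = j$, and suppose toward a contradiction that $\alpha_i$ and $\beta_j$ do not intersect coherently. Then there is another intersection point $x' \in \alpha_i \cap \beta_j$ with $\eta(x') = -\eta(x_i)$.

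The key observation is that we can replace the $i\Th$ coordinate of $\x$ with $x'$ to obtain a new tuple
\[
\x' = (x_1, \dots, x_{i-1}, x', x_{i+1}, \dots, x_g),
\]
which still lies in $\SS(H)$ because $x' \in \alpha_i \cap \beta_j$ and so $\sigma_{\x'} = \sigma_\x$. Comparing the defining products for $\eta(\x)$ and $\eta(\x')$, the sign of the permutation and the factors $\eta(x_k)$ for $k \neq i$ are identical, while the factor at position $i$ flips from $\eta(x_i)$ to $\eta(x') = -\eta(x_i)$. Therefore $\eta(\x') = -\eta(\x)$.

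This directly contradicts Lemma \ref{l: samesign}, which asserts that every generator of a strong Heegaard diagram has the same sign. Hence no such $x'$ exists, and all points of $\alpha_i \cap \beta_j$ share a common sign, which is precisely the definition of coherent intersection. There is no real obstacle here; the only thing to verify carefully is that the swap $\x \mapsto \x'$ produces a bona fide element of $\SS(H)$, and this is immediate because both $x_i$ and $x'$ belong to the same pair $(\alpha_i, \beta_j)$ so the underlying permutation is unchanged.
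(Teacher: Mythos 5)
Your proof is correct and is exactly the argument the paper has in mind when it declares the lemma ``immediate'': swapping the coordinate $x_i$ for an opposite-sign point $x' \in \alpha_i \cap \beta_j$ yields a generator of opposite sign, contradicting Lemma \ref{l: samesign}. The one point you flag---that the swapped tuple is genuinely in $\SS(H)$---is fine, since the $\alpha$ (and $\beta$) curves are pairwise disjoint, so $x'$ cannot coincide with any $x_k$, $k \neq i$, and the permutation $\sigma_{\x}$ is unchanged.
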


We call a Heegaard diagram $H$ {\em coherent} if all pairs of $\alpha$ and $\beta$ curves intersect coherently. As we shall see, the fact that (many of) the curves in a strong Heegaard diagram intersect coherently will be vital to our classification results. However, a strong diagram need not be coherent. For instance, if $H$ is a genus-2 Heegaard diagram in which the intersections $\alpha_1 \cap \beta_1$ and $\alpha_2 \cap \beta_2$ are coherent and non-empty, while $\alpha_1 \cap \beta_2 = \emptyset$, then $H$ is strong irrespective of the signs of points in $\alpha_2 \cap \beta_1$, since these points are not included in elements of $\SS(H)$. Such a diagram can be obtained by taking the connected sum of two standard Heegaard diagrams for lens spaces and isotoping the $\alpha$ curve of one summand so that it intersects the $\beta$ curve of the other summand. However, Proposition \ref{p: 1-extendible} below will enable us to restrict our attention to coherent Heegaard diagrams.

Given a Heegaard diagram $H$, the \emph{intersection graph} $G(H)$ is the bipartite graph with vertex set $A \sqcup B$,  where $A = \{a_1,\dots,a_g\}$ and $B = \{b_1,\dots,b_g\}$, and for which the set of edges joining $a_i$ and $b_j$ is $\{e_x \mid x \in \alpha_i \cap \beta_j\}$. Note that $\SS(H)$ is in natural one-to-one correspondence with the set of perfect matchings of $G(H)$.
The {\em degree} of a vertex $v$ in a graph $G$ is the number of edges in $G$ with an endpoint at $v$.  Write $\delta(G)$ for the minimum vertex degree  in $G$, and write $\delta(H) = \delta(G(H))$.  A graph $G$ is {\em $k$-extendible} if any $k$-tuple of disjoint edges extends to a perfect matching of $G$.  A Heegaard diagram $H$ is {\em $k$-extendible} if its intersection graph $G(H)$ is. In particular, $H$ is $1$-extendible if every point $x \in \alpha \cap \beta$ is contained in a generator $\x \in \SS(H)$. As an immediate consequence of the previous two lemmas, we have:

\begin{lemma} \label{l: all coherent}
A strong, $1$-extendible Heegaard diagram is coherent. \qed
\end{lemma}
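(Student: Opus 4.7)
The plan is to deduce this directly by combining Lemma \ref{l: coherent} with the $1$-extendibility hypothesis, running the argument pair-by-pair over all $(\alpha_i, \beta_j)$. Coherence of $H$ means that for every $i,j$, the set $\alpha_i \cap \beta_j$ consists of points of a single sign. When $\alpha_i \cap \beta_j = \emptyset$, this holds vacuously, so the content is only in the pairs that actually intersect.

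So fix a pair $(\alpha_i, \beta_j)$ with $\alpha_i \cap \beta_j \neq \emptyset$, and pick any $x \in \alpha_i \cap \beta_j$. The key step is to produce a generator of $\SS(H)$ containing $x$: since $H$ is $1$-extendible, the single edge $e_x$ in $G(H)$ extends to a perfect matching, which under the natural bijection between perfect matchings of $G(H)$ and elements of $\SS(H)$ yields a generator $\x \in \SS(H)$ with $x$ as one of its coordinates. Now apply Lemma \ref{l: coherent}: since $H$ is strong and $\x \in \SS(H)$ contains a point of $\alpha_i \cap \beta_j$, the curves $\alpha_i$ and $\beta_j$ intersect coherently.

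There is no real obstacle here; the statement is a formal consequence of the two preceding lemmas, and the $1$-extendibility hypothesis exists precisely to upgrade the conclusion of Lemma \ref{l: coherent} from ``those pairs $(\alpha_i, \beta_j)$ that appear in some generator'' to ``every intersecting pair.'' This is why the example earlier in the section (the connected sum of two lens space diagrams made strong but not coherent by an extraneous isotopy) does not contradict the lemma: that diagram fails to be $1$-extendible, because the extra intersection points in $\alpha_2 \cap \beta_1$ are isolated from every generator.
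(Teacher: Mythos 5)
Your proof is correct and is exactly the paper's intended argument: the paper states this lemma as an immediate consequence of Lemma \ref{l: coherent} together with the definition of $1$-extendibility (every intersection point lies in some generator), which is precisely the pair-by-pair reasoning you carry out. Your closing remark about the non-coherent strong example failing $1$-extendibility also matches the paper's discussion.
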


For a strong diagram $H$, the combinatorics of $M(H)$ and $G(H)$ are expressed by the closely related concepts of P\'olya matrices and Pfaffian orientations, respectively.  A {\em P\'olya matrix} is a $g \times g$ square matrix $M = (m_{i,j})$ for which all non-zero terms in the expansion
\[
\det(M) = \sum_\sigma \sign(\sigma) m_{1,\sigma(1)} \cdots m_{g, \sigma(g)}
\]
come with the same sign.  Equivalently, if we write $\abs{M}$ for the matrix $(\abs{m_{i,j}})$ and
\[
\per(N) = \sum_\sigma n_{1,\sigma(1)} \cdots n_{g, \sigma(g)}
\]
for the {\em permanent} of a $g \times g$ square matrix $N = (n_{i,j})$, then $M$ is a P\'olya matrix if
\[
\abs{\det(M)} = \per(\abs{M}).
\]
We refer to \cite{vy:pfaffian} for the definition of a Pfaffian orientation.

\begin{proposition}
\label{p: polya}
The following are equivalent conditions on a coherent Heegaard diagram $H$:
\begin{inparaenum}
\item
$H$ is strong,
\item
$M(H)$ is a P\'olya matrix, and
\item
$G(H)$ has a Pfaffian orientation.
\end{inparaenum}
\end{proposition}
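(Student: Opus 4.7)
The coherence hypothesis means that for each $(i,j)$ every point of $\alpha_i \cap \beta_j$ carries the same local intersection sign $\epsilon_{i,j} = \sgn(m_{i,j})$, and that $|m_{i,j}| = |\alpha_i \cap \beta_j|$. The plan is to reduce both equivalences to direct sign-counting arguments that exploit this.

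For (1) $\iff$ (2), I would stratify $\SS(H)$ by the associated permutations. Given $\sigma$, the generators $\x$ with $\sigma_\x = \sigma$ correspond to choices of a point in each $\alpha_i \cap \beta_{\sigma(i)}$, so there are $\prod_i |m_{i,\sigma(i)}|$ of them, each carrying the common sign
\[
\eta(\x) = \sgn(\sigma) \prod_i \epsilon_{i,\sigma(i)}.
\]
Substituting into the identities
\[
\sum_{\x \in \SS(H)} \eta(\x) = \sum_\sigma \sgn(\sigma) \prod_i m_{i,\sigma(i)} = \det M(H), \qquad |\SS(H)| = \per |M(H)|,
\]
and invoking Lemma \ref{l: samesign}, one sees that $H$ is strong iff every nonvanishing term $\sgn(\sigma) \prod_i m_{i,\sigma(i)}$ in the permutation expansion shares the same sign, i.e., iff $M(H)$ is a P\'olya matrix.

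For (2) $\iff$ (3), I plan to recast the same arithmetic in graph-theoretic language. Labeling each edge $e_x$ of $G(H)$ by the sign $\eta(x) = \epsilon_{i,j}$ (for $x \in \alpha_i \cap \beta_j$) endows $G(H)$ with the orientation canonically associated to the signed matrix $M(H)$. The perfect matchings of $G(H)$ biject with $\SS(H)$, and the signed enumeration of these matchings with respect to this orientation recovers $\det M(H)$. Thus $M(H)$ being P\'olya --- all nonvanishing terms of $\det M(H)$ sharing a sign --- is synonymous with this canonical orientation being Pfaffian, in the sense recorded in \cite{vy:pfaffian}.

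I expect no substantial obstacle: once coherence is invoked, the proof is essentially bookkeeping. The one point meriting care is a consistent handling of signs across the determinantal expansion, the permutation signs $\sgn(\sigma)$, the local intersection signs $\epsilon_{i,j}$, and their edge-sign encoding as a Pfaffian orientation of $G(H)$.
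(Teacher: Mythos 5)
Your argument is correct and takes essentially the same route as the paper: the equivalence of (1) and (2) is exactly what the paper calls the ``effective content'' of Lemma \ref{l: samesign}, obtained by combining coherence with the permutation expansion of $\det M(H)$ already recorded in Section \ref{sec: preliminaries}, and the equivalence with (3) is deferred to \cite{vy:pfaffian} in both treatments. The one caveat is that your sketch of (2) $\iff$ (3) directly establishes only that $M(H)$ is P\'olya iff the \emph{particular} orientation of $G(H)$ induced by the coherent signs is Pfaffian, whereas (3) asserts the existence of \emph{some} Pfaffian orientation; bridging that gap is precisely the content being outsourced to \cite{vy:pfaffian}, exactly as in the paper's own one-line proof.
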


\begin{proof}
The equivalence of (1) and (2) is the effective content of Lemma \ref{l: samesign}.  The equivalence of (2) and (3) follows from \cite{vy:pfaffian}.
\end{proof}

The sign pattern of the entries of a P\'olya matrix is highly constrained.  This fact plays a key role in the proof that the fundamental group of a strong L-space is not left-orderable \cite{LevineLewallen}. P\'olya matrices obey a deep structure theorem due independently to McCuaig \cite{mccuaig:polya} and Robertson, Seymour, and Thomas  \cite{rst:polya}.  We apply a result of their work in Section \ref{subsec: reducibility}.

\section{Extendibility} \label{sec: extendibility}

The purpose of this section is to show that every rational homology sphere admits a 1-extendible Heegaard diagram attaining its simultaneous trajectory number.  In particular, every strong L-space admits a 1-extendible, strong Heegaard diagram.  This result is very useful in the subsequent sections. The technical statement is as follows:

\begin{proposition}
\label{p: 1-extendible}
Let $H$ be a doubly-pointed Heegaard diagram for a rational homology sphere.  Then there exists a sequence of handleslides and isotopies in the complement of the basepoints that transforms $H$ into a 1-extendible Heegaard diagram $H'$ such that $\abs{\SS(H)} = \abs{\SS(H')}$. In particular, if $H$ is strong, then so is $H'$.
\end{proposition}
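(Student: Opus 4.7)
My plan is to iteratively reduce the number of intersection points that lie in no generator, using Heegaard moves that preserve $\abs{\SS(H)}$. Call $x \in \alpha_i \cap \beta_j$ \emph{useless} if no $\x \in \SS(H)$ contains $x$; equivalently, the edge $e_x$ of $G(H)$ lies in no perfect matching of $G(H)$. Since $H$ is 1-extendible precisely when every intersection point is non-useless, I would induct on the number of useless points, showing that whenever this number is positive, some finite sequence of handleslides and isotopies (in the complement of the basepoints) strictly decreases it while leaving $\abs{\SS(H)}$ unchanged.

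For the inductive step, pick a useless $x_0 \in \alpha_i \cap \beta_j$. Because $\abs{\SS(H)} \ge \det(Y) > 0$, the graph $G(H)$ has a perfect matching, but none contains $e_{x_0}$; equivalently, $G(H) \setminus \{a_i, b_j\}$ has no perfect matching. Hall's theorem then produces a \emph{tight barrier}: a subset $A_0 \subseteq A \setminus \{a_i\}$ with $b_j \in B_0 := N_{G(H)}(A_0)$ and $\abs{B_0} = \abs{A_0}$. On the surface $S$, this says that the curves $\{\alpha_k : a_k \in A_0\}$ meet \emph{only} the curves $\{\beta_\ell : b_\ell \in B_0\}$, with $\beta_j$ among them. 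Every perfect matching of $G(H)$ must restrict to a perfect matching between $A_0$ and $B_0$; the useless edge $e_{x_0}$ would ``poach'' $b_j$ from this block and so is combinatorially forbidden.

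With the barrier in hand, I would produce a Heegaard move eliminating $x_0$ in one of two ways. If $\alpha_i \cap \beta_j$ contains two useless points of opposite local sign, I would first arrange (via handleslides of other curves away from the region of interest) that the two cobound an innermost embedded bigon on $S$ disjoint from the basepoints, and then cancel them by a bigon isotopy. Since both canceled points are useless, $\SS(H)$ is unaffected. When no such opposite-sign useless pair exists, I would handleslide $\alpha_i$ over some $\alpha_k$ with $a_k \in A_0$, chosen so that the new curve $\alpha_i'$ gains no intersections outside $B_0$ (the barrier structure ensures this) and loses at least one useless intersection with some $\beta_\ell$ with $\ell \in B_0$. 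The barrier's block structure lets me match the perfect matchings of the old and new diagrams and verify that the permanents of the corresponding intersection sub-matrices coincide.

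The main obstacle is precisely this last assertion, since a generic handleslide preserves $\HF(Y)$ but not $\abs{\SS(H)}$. Ensuring the equality requires using the barrier's tightness to exhibit a permanent-preserving band sum, and then checking, with auxiliary finger moves if necessary, that the slide can be executed in the complement of the two basepoints without creating any new useless intersections. Once this one-step reduction is established, iterating yields a 1-extendible diagram $H'$ with $\abs{\SS(H')} = \abs{\SS(H)}$, and the ``in particular'' clause follows because the hypothesis $\abs{\SS(H)} = \det(Y)$ for a strong diagram depends only on $Y$.
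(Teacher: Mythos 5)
Your combinatorial setup is sound and in fact recovers exactly the structure the paper exploits: a useless point $x_0\in\alpha_i\cap\beta_j$ forces, via Hall's theorem, a tight set $A_0$ with $\abs{N(A_0)}=\abs{A_0}$, which says precisely that some $k$ of the $\alpha$ curves are disjoint from the complementary $g-k$ of the $\beta$ curves (this is the failure of Hetyei's $1$-extendibility criterion, the paper's starting point). The genuine gap is at the step you yourself flag as the ``main obstacle'': you never establish that your local moves can be carried out while preserving $\abs{\SS(H)}$ and avoiding the basepoints. Concretely, (a) two useless points of $\alpha_i\cap\beta_j$ with opposite local signs need not cobound a bigon --- they can persist when the curves are in minimal position --- and ``handlesliding other curves away from the region of interest'' is not a justified operation; such slides change intersections with other curves and can create or destroy generators; (b) in the other case, a handleslide of $\alpha_i$ over $\alpha_k$ generically \emph{increases} geometric intersection numbers, and the claims that it loses a useless intersection, creates no new ones, and preserves the permanent of the intersection data are asserted rather than proved. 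Since the whole induction rests on this one-step reduction, the argument is incomplete.

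The paper closes this gap by a different mechanism: rather than deleting useless points one at a time, it uses the block structure wholesale. Because $Y$ is a rational homology sphere, the $k$ $\alpha$ curves of the block together with the $g-k$ $\beta$ curves they miss represent linearly independent classes in $H_1(S;\Z)$, so their complement is a $2g$-punctured sphere containing a separating curve $\gamma$; radial isotopies and handleslides supported away from $z_1,z_2$ then push the remaining $\beta$ (resp.\ $\alpha$) curves to one side of $\gamma$, yielding a reducible diagram with a sign-preserving bijection of generators (Lemma \ref{l: connected sum}), and one concludes by induction on the genus of the summands; all useless points between the two blocks disappear at once. Note that this is where the rational homology sphere hypothesis enters geometrically (cf.\ the remark following Lemma \ref{l: connected sum}); your sketch uses that hypothesis only to guarantee a perfect matching and supplies no substitute for this geometric input, which is exactly what is needed to legitimize the Heegaard moves your induction requires.
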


\begin{corollary}
\label{c: 1-extendible}
A rational homology sphere $Y$ admits a 1-extendible Heegaard diagram $H$ for which $\abs{\SS(H)} = M(Y)$. \qed
\end{corollary}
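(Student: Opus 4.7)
The plan is to induct on the total intersection number $\abs{\alpha \cap \beta}$. The base case $\abs{\alpha \cap \beta} = 0$ occurs only when $g = 0$ (since otherwise the intersection matrix $M(H)$ would be zero, forcing $H_1(Y;\Z) = \Z^g$ and contradicting that $Y$ is a rational homology sphere), in which case $H$ is vacuously 1-extendible. For the inductive step, assume $H$ is not 1-extendible; I will produce a sequence of handleslides and isotopies in the complement of the basepoints that strictly decreases $\abs{\alpha \cap \beta}$ without changing $\abs{\SS(H)}$, and then invoke the inductive hypothesis.

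To identify the move, I extract a combinatorial obstruction to 1-extendibility. Since $H$ is not 1-extendible, some $x \in \alpha_i \cap \beta_j$ has its edge $e_x$ in $G(H)$ outside every perfect matching, equivalently $G(H) - \{a_i, b_j\}$ has no perfect matching. Because $G(H)$ itself has a perfect matching (as $\det Y \ne 0$), Hall's theorem yields a subset $A' \subseteq A \setminus \{a_i\}$ with $N(A') = B' \cup \{b_j\}$ for some $B' \subseteq B \setminus \{b_j\}$ of size $\abs{A'} - 1$. Consequently, every perfect matching of $G(H)$ bijects $A'$ onto $B' \cup \{b_j\}$, so every edge from $a_i$ into $B' \cup \{b_j\}$, including $e_x$, is wasted.

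Using this structure, the topological move is a targeted handleslide. I choose $k \in A'$ for which $b_j$ is matched with $a_k$ in some perfect matching and slide $\alpha_i$ over $\alpha_k$ along an arc disjoint from the basepoints. Because the curves $\{\alpha_\ell : \ell \in A'\}$ meet only the curves $\{\beta_m : m \in B' \cup \{j\}\}$, the new intersections acquired by the slid $\alpha_i$ lie inside this ``saturated'' block, and hence all of them are again wasted. With a careful choice of slide arc, the wasted intersections of the new $\alpha_i$ with $\{\beta_m : m \in B' \cup \{j\}\}$ pair off into innermost bigons in the complement of the basepoints, which I then cancel by isotopy. Since every removed intersection point corresponds to a wasted edge, $\abs{\SS(H)}$ is preserved while $\abs{\alpha \cap \beta}$ strictly decreases, completing the induction.

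The main technical hurdle is the last step: verifying that the handleslide arc can always be selected, and the subsequent isotopies arranged, so that the wasted edges genuinely organize into cancelable bigon pairs while respecting the complement of the basepoints. This requires carefully translating the combinatorial Hall structure into a topological picture of the arcs of the $\alpha$- and $\beta$-curves on the Heegaard surface, and may require iterating several slides and isotopies rather than the single slide sketched here. The other steps are essentially formal consequences of Hall's theorem and the bijection between $\SS(H)$ and perfect matchings of $G(H)$.
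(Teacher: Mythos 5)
In the paper this corollary is a one-line consequence of Proposition \ref{p: 1-extendible}: start from a diagram realizing $M(Y)$ and apply the proposition, which converts it by handleslides and isotopies into a $1$-extendible diagram with the same number of generators. What you are really attempting is a proof of that proposition, and your combinatorial step is correct and matches the paper's: non-$1$-extendibility of a graph with a perfect matching yields a tight set (you extract it from Hall's theorem; the paper quotes Hetyei's criterion), i.e.\ some $k$ of the $\alpha$ curves are disjoint from $g-k$ of the $\beta$ curves, and every intersection between the complementary families is ``wasted.''

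The gap is in the topological step, and it is not a deferred technicality---it is the entire content of the argument. A handleslide of $\alpha_i$ over $\alpha_k$ adds a full parallel copy of $\alpha_k \cap \beta$ (plus two copies of the band arc's crossings with $\beta$) to $\alpha_i$, so it typically \emph{increases} $\abs{\alpha \cap \beta}$, and there is no reason the wasted intersections of the new curve bound bigons: wasted intersection points can be essential (the curves already in minimal position), in which case no isotopy removes them and your induction measure does not decrease after a slide. Moreover, how many slides are needed, and over which curves, is dictated by homology rather than by local bigon availability: emptying the wasted block forces $\alpha_i' \cdot \beta_j' = 0$ for the relevant pairs, and the achievable values change only by integer combinations of the intersection numbers of the curves slid over, so in general one must also slide the $\beta$ curves and must pass through diagrams with larger geometric intersection number; a monotone decrease of $\abs{\alpha \cap \beta}$ is simply not available. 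Your band arc is also only constrained to avoid $\alpha$ curves, so it may cross out-of-block $\beta$ curves and create non-wasted intersections, hence new generators, changing $\abs{\SS(H)}$; ruling this out needs exactly the global control you have not supplied. The paper's route (Lemma \ref{l: connected sum}) is global: because $Y$ is a rational homology sphere, the $g$ disjoint curves $\alpha_{\le k} \cup \beta_{>k}$ represent independent classes, so their complement is planar; one finds a separating curve and pushes $\beta_{\le k}$ and $\alpha_{>k}$ to opposite sides by radial isotopies in a punctured disk, interpreting an arc passing over a puncture as a handleslide. This splits the diagram as a connected sum, removes all wasted intersections at once, preserves $\abs{\SS}$ because generators never used those points, and the induction is on genus, not on intersection number. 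Note finally that your proposal never uses the rational homology sphere hypothesis in the topological step, while the remark following Lemma \ref{l: connected sum} shows some such hypothesis is essential---a further sign that the step you defer cannot be routine.
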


To prove Proposition \ref{p: 1-extendible}, we begin by establishing a simple criterion to recognize that a given Heegaard diagram can be converted into a reducible one by a sequence of isotopies and handleslides.  In order to state it in a sharp form, we require a little notation and background concerning Heegaard diagrams and presentations of the fundamental group.

Given any free group $F^z = \gen{ z_1,\dots,z_g }$ and a value $1 \leq k \leq g$, we obtain groups
\[
F^z_{\leq k} = \gen{ z_1, \dots, z_k }, \quad  F^z_k = \gen{ z_k }, \quad F^z_{\geq k} = \gen{ z_k, \dots, z_g },
\]
and projections from $F^z$ to each. For any word $w \in F^z$, let
\[
w_{\leq k} = w_{< k+1}, \quad w_k,  \quad \text{and} \quad w_{\geq k} = w_{> k-1},
\]
denote the respective images of $w$ under these projections; each of them is a subword of $w$. In a similar spirit, for a collection of curves $\gamma_1,\dots,\gamma_g$, we define
\[
\gamma_{\le k} = \gamma_{<k+1} =  \gamma_1 \cup \cdots \cup \gamma_k \quad \text{and} \quad \gamma_{ > k} = \gamma_{k+1} \cup \cdots \cup \gamma_g.
\]

Let $H = (S,\alpha,\beta)$ denote a Heegaard diagram that presents a 3-manifold $Y$.  Form the free group $F^x = \gen{x_1, \dots, x_g}$.  Choose a point $p_i \in \alpha_i$ disjoint from $\beta$ and traverse a full loop around $\alpha_i$ starting at $p_i$.  For each intersection point between $\alpha_i$ and $\beta_j$ with sign $\epsilon$, record the term $x_j^\epsilon$.  The product of these terms, in order from left to right, yields a word $w_\beta(\alpha_i) \in F_x$.  Observe that a different choice of $p_i$  will result in the same word up to cyclic equivalence.  As a result, we regard $w_\beta(\alpha_i)$ and its subwords up to cyclic equivalence. The group $\pi_1(Y)$ admits the presentation $F^x / \gen{ w_\beta(\alpha_1), \dots, w_\beta(\alpha_g)}$.  In an analogous manner, we define the free group $F^y = \gen{ y_1, \dots, y_g }$, associate a cyclic word $w_\alpha(\beta_j)$ in $y_1, \dots, y_g$ to each curve $\beta_j$, and obtain the presentation $F^y / \gen{ w_\alpha(\beta_1), \dots, w_\alpha(\beta_g) }$ for $\pi_1(Y)$.

\begin{lemma}
\label{l: connected sum}
Suppose that $H = (S,\alpha,\beta,z_1,z_2)$ is a doubly-pointed genus-$g$ Heegaard diagram for a rational homology sphere $Y$, and for some $0 < k < g$,
\[
\alpha_{\le k} \cap \beta_{> k} = \emptyset.
\]
Then it is possible to perform handleslides and isotopies in the complement of $z_1$ and $z_2$ to produce a Heegaard diagram $H' = (S,\alpha',\beta',z_1,z_2)$ such that $\abs{\SS(H)} = \abs{\SS(H')}$,
\[
w_{\beta'}(\alpha_i') = w_\beta(\alpha_i)_{\leq k}, \quad w_{\alpha'}(\beta_j') = w_\alpha(\beta_j)_{\leq k}, \quad \forall \, i,j \leq k,
\]
and
\[
w_{\beta'}(\alpha_i') = w_\beta(\alpha_i)_{> k}, \quad w_{\alpha'}(\beta_j') = w_\alpha(\beta_j)_{> k}, \quad \forall \, i,j > k.
\]
In particular, $\alpha_{\le k}' \cap \beta_{> k}' = \alpha_{>k}' \cap \beta_{\le k} = \emptyset$, while there is an identification $\alpha_i' \cap \beta_j' = \alpha_i \cap \beta_j$ preserving local signs of intersection when $i,j \le k$ or $i,j > k$. Thus, $H'$ is reducible with summands $H_1 = (S_{\leq k},\alpha_{\leq k}', \beta_{\leq k}')$ and $H_2 = (S_{> k},\alpha_{> k}', \beta_{> k}')$.  If $H$ is strong, then so are $H_1$ and $H_2$.
\end{lemma}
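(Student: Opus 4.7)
My strategy is to perform handleslides and isotopies that eliminate the ``dead'' intersections in $\alpha_{>k} \cap \beta_{\le k}$ while preserving those within each diagonal block, thereby exhibiting the desired connected-sum decomposition at the diagram level.

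First, I would establish the key combinatorial observation: since $\alpha_i \cap \beta_j = \emptyset$ for $i \le k < j$, the permutation $\sigma_\x$ associated to any $\x \in \SS(H)$ must preserve the partition $\{1,\dots,k\} \sqcup \{k+1,\dots,g\}$. Consequently, no point of $\alpha_{>k} \cap \beta_{\le k}$ lies in any generator, and there is a natural product decomposition $\SS(H) = \SS_1 \times \SS_2$, where $\SS_i$ consists of partial matchings within the $i$-th diagonal block of $G(H)$. Thus any sequence of moves that eliminates these dead intersections without altering the diagonal blocks automatically preserves $|\SS(H)|$, and strongness of $H$ passes to both summands.

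Second, I would identify the handleslides that preserve the block structure. Sliding $\alpha_i$ (with $i > k$) over $\alpha_j$ (with $j \le k$) does not change $\alpha_i \cap \beta_{>k}$, since $\alpha_j \cap \beta_{>k} = \emptyset$; dually, sliding $\beta_j$ (with $j \le k$) over $\beta_{j'}$ (with $j' > k$) does not change $\alpha_{\le k} \cap \beta_j$. These moves act on the off-block submatrix $C$ of $M(H)$ by adding integer combinations of rows of $A$ and columns of $D$ respectively. The task becomes: execute such moves so that each $\alpha'_i \cap \beta'_j$ becomes empty (not merely algebraically zero) for $i > k$ and $j \le k$, while the diagonal blocks are pointwise preserved.

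The main obstacle is realizing this cancellation geometrically, not merely algebraically, and in the complement of $z_1, z_2$. I would attack this via a 3-manifold-topology argument: the hypothesis furnishes disks bounded by $\alpha_{\le k}$ in $U_\alpha$ that are disjoint from disks bounded by $\beta_{>k}$ in $U_\beta$, so the splitting is weakly reducible, and together with $\det(M(H)) \ne 0$ this rules out the obstructions to producing a compatible sphere. An innermost-disk/outermost-arc argument in these disk systems then yields bigons on $S$ whose boundaries consist of arcs of $\alpha_{>k}$ and $\beta_{\le k}$; each such bigon guides an isotopy or handleslide that cancels a pair of dead intersection points without creating new ones, while the basepoints can be avoided since each bigon lies in a neighborhood of the points being cancelled. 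Iterating yields the claimed $H'$. The word identities follow immediately from the identification of intersection points across diagonal blocks, and the strongness of $H_1$ and $H_2$ is then a direct consequence of $\SS(H') = \SS(H_1) \times \SS(H_2)$ together with the multiplicativity $\det(Y) = \det(Y_1)\det(Y_2)$ across the block-diagonal intersection matrix $M(H')$.
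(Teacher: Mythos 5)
Your combinatorial bookkeeping is right (no generator can use a point of $\alpha_{>k} \cap \beta_{\le k}$, so $\abs{\SS(H)}$ factors over the two diagonal blocks), and you have identified the correct class of moves (sliding $\alpha_{>k}$ over $\alpha_{\le k}$ and $\beta_{\le k}$ over $\beta_{>k}$). But the heart of the lemma --- actually eliminating the dead intersections geometrically --- is where your argument has a genuine gap. Your mechanism is to produce bigons with sides on $\alpha_{>k}$ and $\beta_{\le k}$ via an innermost-disk/outermost-arc argument and cancel intersection points in pairs. Such bigons need not exist: the off-block algebraic intersection numbers $\alpha_i \cdot \beta_j$ ($i>k$, $j \le k$) can be nonzero (a lower-triangular intersection matrix with nonzero subdiagonal entries, as in Corollary \ref{c: lens spaces}), in which case those curves may intersect coherently and no isotopy can remove any of their intersections; handleslides must come first, to change the homology classes. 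You acknowledge the algebraic effect of the slides ($C \mapsto C + XA + DY$), but you never show this equation is solvable over $\Z$, and for abstract integer matrices it is not (e.g.\ $A=D=(2)$, $C=(1)$); the only thing ruling such configurations out is the hypothesis that $Y$ is a rational homology sphere, which you invoke only through a vague appeal to weak reducibility and ``ruling out obstructions to a compatible sphere.'' Your basepoint control is also too optimistic: handleslide bands and bigons are not local to the cancelled points, so ``the bigon lies in a neighborhood of the points being cancelled'' does not justify avoiding $z_1,z_2$. The Remark following the lemma (the stabilized $S^1\times S^2$ diagram) shows that some such global input is indispensable, so this step cannot be waved through.

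The paper's proof uses the rational homology sphere hypothesis at exactly this point, and in a more global way than bigon-by-bigon cancellation. Since $[\alpha_1],\dots,[\alpha_g],[\beta_1],\dots,[\beta_g]$ are linearly independent in $H_1(S;\Z)$, the $g$ disjoint curves $\alpha_{\le k} \cup \beta_{>k}$ cut $S$ into a $2g$-punctured sphere; hence there is a curve $\gamma$, unique up to isotopy, separating $S$ into $\overline{S}_{\le k} \supset \{z_1\} \cup \alpha_{\le k}$ and $\overline{S}_{>k} \supset \{z_2\} \cup \beta_{>k}$. One then performs a single radial isotopy of $\beta_{\le k}$ in the punctured disk $N(\gamma) \cup \overline{S}_{>k} \minus \beta_{>k}$, pushing it away from $z_2$ and into a collar of $\gamma$, where each pass of an arc over a puncture is interpreted as a handleslide of $\beta_{\le k}$ over $\beta_{>k}$; symmetrically for $\alpha_{>k}$ over $\alpha_{\le k}$ on the other side. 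Because these moves are supported in a region disjoint from $\alpha_{\le k}$ (resp.\ $\beta_{>k}$) and away from the basepoints, the diagonal-block intersections, their signs, and the words $w_\beta(\alpha_i)_{\le k}$, $w_\alpha(\beta_j)_{>k}$, etc., are preserved automatically, which is what your local-cancellation scheme would still have to verify even if the bigons existed.
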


\begin{remark}
Some restriction on $Y$ or $H$ is necessary in order to guarantee the conclusion of Lemma \ref{l: connected sum}.  For example, take $Y = S^1 \times S^2$ (so $b_1(Y) > 0$), consider its standard genus-$1$ Heegaard diagram with no intersection points, and stabilize the diagram once.  Label the curves in the resulting diagram so that $\alpha_1 \cap \beta_2 = \emptyset$.  The conclusion of Lemma \ref{l: connected sum} in this case (with $k=1$) would produce a genus-$2$ Heegaard diagram in which each pair of curves is disjoint, but such a diagram must present $\#^2 (S^1 \times S^2) \ne Y$, a contradiction.
\end{remark}

\begin{proof}[Proof of Lemma \ref{l: connected sum}]
Recall that $H_1(Y;\Z) \cong H_1(S;\Z) / \gen{[\alpha_1], \dots, [\alpha_g], [\beta_1], \dots, [\beta_g]}.$  Since $Y$ is a rational homology sphere, it follows that the classes $[\alpha_1], \dots, [\alpha_g], [\beta_1], \dots, [\beta_g]$ are linearly independent in $H_1(S;\Z)$.  In particular, $\alpha_1,\dots, \alpha_k, \beta_{k+1}, \dots, \beta_g$ are $g$ disjoint curves representing linearly independent classes in $H_1(S;\Z)$.  It follows that $S \minus (\alpha_{\le k} \cup \beta_{> k})$ is a $2g$-punctured sphere.  As a result, there exists an unoriented curve $\gamma \subset S$, unique up to isotopy, with a regular neighborhood $N(\gamma)$ that separates $S$ into subsurfaces $\overline{S}_{\leq k}, \overline{S}_{> k}$ such that $\{ z_1 \} \cup \alpha_{\le k} \subset \overline{S}_{\leq k}$ and $\{ z_2 \} \cup \beta_{> k} \subset \overline{S}_{> k}$. See Figure \ref{fig: extendible}.

\begin{figure}
\labellist
 \pinlabel {{\color{red} $\alpha_1$}}  at 112 260
 \pinlabel {{\color{red} $\alpha_2$}}  at 195 274
 \pinlabel {{\color{red} $\alpha_3$}}  at 281 286
 \pinlabel {{\color{blue} $\beta_4$}}  at 128 98
 \pinlabel {{\color{blue} $\beta_5$}}  at 250 103
\pinlabel{$\overline{S}_{\le 3}$} [bl] at 309 301
\pinlabel{$\overline{S}_{> 3}$} [tl] at 309 56
\pinlabel{$z_1$} [l] at 183 344
\pinlabel{$z_2$} [l] at 183 13
\pinlabel{$\gamma$} [tl] at 294 159
\endlabellist
\includegraphics[scale=0.5]{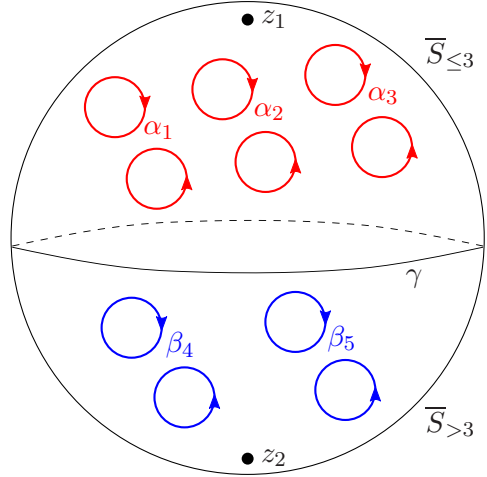}
\caption{A cut-open Heegaard diagram as in the proof of Lemma \ref{l: connected sum}, with $k=3$ and $g=5$.} \label{fig: extendible}
\end{figure}


The space $N(\gamma) \cup \overline{S}_{> k} \minus \beta_{>k}$ is a $2(g-k)$-punctured disk, and $N(\gamma) \cap \overline{S}_{\le k}$ is a collar neighborhood of its boundary.  We radially isotope $\beta_{\le k} \cap (N(\gamma) \cup \overline{S}_{> k})$ away from $z_2$ and into $N(\gamma) \cap \overline{S}_{\leq k}$, permitting arcs to pass over punctures in the process. Passing an arc over a puncture corresponds to a handleslide in $S$, so we may interpret this process as a sequence of isotopies and handleslides of $\beta_{\le k}$ over $\beta_{> k}$ in $S$ which is the identity outside of $N(\gamma) \cup \overline{S}_{> k} \minus \{z_2\}$.  The resulting collection of curves $\beta_{\le k}'$ supported in $\overline{S}_{\leq k}$ has the property that $w_\alpha(\beta_j')_{\leq k} = w_\alpha(\beta_j)_{\leq k}$ for all $j \leq k$, since $\alpha_{\le k} \cap \beta_{> k} = \emptyset$ and the handleslides do not change the intersections of $\beta_{\le k}$ with $\alpha_{\le k}$. Similarly, we isotope and handleslide the curves $\alpha_{> k}$ over $\alpha_{\le k}$ in $N(\gamma) \cup \overline{S}_{\le k} \minus \{z_1\}$ to get curves $\alpha_{> k}'$ supported in $\overline{S}_{> k}$ and such that $w_{\beta'}(\alpha_i')_{> k} = w_{\beta'}(\alpha_i)_{> k}$ for all $i$.  Lastly, we define $\alpha_i' = \alpha_i$ for $i =1, \dots, k$ and $\beta_j' = \beta_j$ for $j=k+1, \dots, g$.

The preceding remarks and the fact that $\beta_{\le k}' \cap \alpha_{> k}' = \emptyset$ yield the stated conclusions about $w(\alpha_i')$ and $w(\beta_j')$.  Capping off $\overline{S}_{\leq k}$ and $\overline{S}_{> k}$ along $\gamma$ results in the surfaces $S_{\leq k}$ and $S_{> k}$ required by the Lemma.  Finally, note that there exists a natural bijection between $\SS(H)$ and $\SS(H')$, since a generator in $\SS(H)$ cannot use any element of $\alpha_{>k} \cap \beta_{\le k}$. In particular, if $H$ is strong, then so is $H'$.
\end{proof}

\begin{proof}[Proof of Proposition \ref{p: 1-extendible}]
We proceed by induction on $g(H)$.  For $g(H)=1$ the assertion is trivial, so we proceed to the induction step.  By a theorem of Hetyei, a bipartite graph $G$ with bipartition $(A,B)$ is 1-extendible iff for every subset $T \subset A$, the set of neighbors of $T$ in $B$ has cardinality at least $\max\{|B|,|A|+1\}$ \cite{hetyei}, \cite[Theorem 4.1.1]{LovaszPlummerBook}.  Since $G(H)$ contains a perfect matching, it is 0-extendible.  Assume that it is not 1-extendible.  Then there exists some proper, non-empty subset $T \subset A$ such that $T$ has exactly $|T|$ neighbors in $B$.  In other words, there exist some $k$ $\alpha$ circles that are disjoint from some $(g-k)$ $\beta$ circles, where $1 \leq k < g$.  Thus, Lemma \ref{l: connected sum} implies that there exists a sequence of handleslides and isotopies which transform $H$ into a reducible diagram $H'$, and which is strong provided that $H$ is.  By induction on the genera of the summands of $H'$, the statement of the Proposition follows.
\end{proof}

A simple induction using Lemma \ref{l: connected sum} establishes the following corollary:

\begin{corollary} \label{c: lens spaces}
If a strong diagram has an upper triangular intersection matrix, then it presents a connected sum of lens spaces. \qed
\end{corollary}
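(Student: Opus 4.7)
The plan is to induct on the genus $g$ of the diagram $H$, using Lemma~\ref{l: connected sum} at each step to split off a single genus-$1$ summand. The case $g = 1$ is immediate, since a strong genus-$1$ diagram presents a lens space. For the inductive step with $g \geq 2$, it will suffice to locate an index $i_0$ for which $\alpha_{i_0}$ is geometrically disjoint from every $\beta_j$ with $j \neq i_0$; then a relabeling puts $\alpha_{i_0}, \beta_{i_0}$ first, and Lemma~\ref{l: connected sum} with $k = 1$ produces a lens-space summand and a strong genus-$(g-1)$ diagram to which the induction hypothesis will apply.

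To find such an $i_0$, I would exploit the chain
\[
|\SS(H)| = \sum_\sigma \prod_i |\alpha_i \cap \beta_{\sigma(i)}| \geq \sum_\sigma \prod_i |m_{i,\sigma(i)}| = \per(|M(H)|) \geq |\!\det M(H)| = |\SS(H)|,
\]
where the first inequality is termwise, the second is the general permanent--determinant bound, and the final equality combines the strong hypothesis with the observation that upper triangularity collapses $\det M(H)$ to $\prod_i m_{ii}$, so that only $\sigma = \mathrm{id}$ contributes. Equality throughout forces (i) $|\alpha_i \cap \beta_i| = |m_{ii}|$ with coherent intersection for every $i$, and (ii) for every $\sigma \neq \mathrm{id}$, some $i$ satisfies $\alpha_i \cap \beta_{\sigma(i)} = \emptyset$.

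Let $G$ be the bipartite graph on $\{a_1, \ldots, a_g\} \sqcup \{b_1, \ldots, b_g\}$ with an edge $a_i b_j$ whenever $\alpha_i \cap \beta_j \neq \emptyset$. Conditions (i) and (ii) say exactly that the identity permutation is the unique perfect matching of $G$. A standard consequence is that $G$ has a degree-$1$ vertex: orient each matching edge from $A$ to $B$ and each non-matching edge from $B$ to $A$; any directed cycle would yield an alternating cycle, hence a second matching, so the orientation is acyclic and admits a source in $A$, which can have no non-matching edges and so has degree $1$. Taking $i_0$ with $a_{i_0}$ a source yields $\alpha_{i_0} \cap \beta_j = \emptyset$ for all $j \neq i_0$.

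Relabeling via the order-preserving bijection between $\{2, \ldots, g\}$ and $\{1, \ldots, g\} \setminus \{i_0\}$, and sending $\alpha_{i_0} \mapsto \alpha_1'$, $\beta_{i_0} \mapsto \beta_1'$, gives $\alpha_1' \cap \beta_j' = \emptyset$ for $j > 1$. Lemma~\ref{l: connected sum} with $k = 1$ then produces a strong reducible diagram whose genus-$1$ summand presents the lens space $L(|m_{i_0 i_0}|, q)$ (using coherence of $\alpha_{i_0} \cap \beta_{i_0}$) and whose genus-$(g-1)$ summand has intersection matrix equal to the $(i_0, i_0)$-minor of $M(H)$; because the relabeling is order-preserving, this minor remains upper triangular, and induction finishes. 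The main obstacle is extracting the graph-theoretic uniqueness of the matching from the equality chain, since all three inequalities must collapse simultaneously; without upper triangularity isolating the identity as the sole determinant-contributing permutation, the final equality need not force a unique-matching conclusion, and this is exactly what opens the door to the acyclic-orientation argument and hence to Lemma~\ref{l: connected sum}.
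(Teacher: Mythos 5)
Your proof is correct and follows the paper's intended route: the paper disposes of this corollary in a single line as ``a simple induction using Lemma~\ref{l: connected sum}'', and your induction, splitting off one genus-one summand at a time via that lemma, is exactly that induction. The details you supply --- the collapse of the permanent--determinant chain forcing the identity to be the unique perfect matching, and the acyclic-orientation argument producing a degree-one vertex --- are a valid way to upgrade the vanishing of the algebraic intersection numbers below the diagonal to the geometric disjointness that Lemma~\ref{l: connected sum} actually requires, which is precisely the step the paper leaves to the reader.
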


\section{Finiteness results}  \label{sec: finiteness}

In this section, we prove Theorem \ref{t: finite}, which asserts that there exist finitely many rational homology spheres with bounded simultaneous trajectory number, and Theorem \ref{t: le8}, which classifies the strong L-spaces with determinant up to 8.  

\begin{lemma}
\label{l: decompose}
Let $H$ be a Heegaard diagram for a rational homology sphere.  Suppose that $H$ contains an attaching curve that has $m \ge 1$ intersection points with another attaching curve and at most one other intersection point.  Then there exists a sequence of isotopies and handleslides converting $H$ into a 1-extendible Heegaard diagram $H_1 \# H_2$ with $g(H_1) = 1$, $\abs{\SS(H_1)}=m$, and $\abs{\SS(H)} = \abs{\SS(H_1 \# H_2)}$.  If $H$ is strong, then so are $H_1$ and $H_2$.
\end{lemma}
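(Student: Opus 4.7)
The plan is to apply Lemma \ref{l: connected sum} to split off a genus-1 summand containing $\alpha_1$ and $\beta_1$, after first arranging (if necessary) that $\alpha_1 \cap \beta_{>1} = \emptyset$. Relabel so that $\alpha_1$ is the attaching curve supplied by the hypothesis, with $|\alpha_1 \cap \beta_1| = m$.

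Suppose first that $\alpha_1 \cap \beta_j = \emptyset$ for every $j \neq 1$. Then Lemma \ref{l: connected sum} with $k=1$ produces, by handleslides and isotopies, a reducible diagram $H' = H_1 \# H_2$ in which the $m$ intersection points of $\alpha_1 \cap \beta_1$ all lie in the genus-1 summand $H_1$ with their local signs preserved. This immediately gives $|\SS(H_1)| = m$ and $|\SS(H)| = |\SS(H')|$, and by the same lemma strongness descends to each summand whenever $H$ is strong. For 1-extendibility, note that the intersection graph of $H_1$ consists of two vertices joined by $m$ parallel edges, each of which is a perfect matching on its own, so $H_1$ is 1-extendible automatically; applying Proposition \ref{p: 1-extendible} to $H_2$ produces a 1-extendible refinement $H_2'$ that preserves $|\SS(H_2)|$ and strongness. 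Since the intersection graph of $H_1 \# H_2'$ is the disjoint union of those of the two summands, the whole diagram is 1-extendible, and multiplicativity of both $|\SS|$ and the determinant under connect sum yields the remaining conclusions.

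If instead $\alpha_1 \cap \beta_j = \{p\}$ for exactly one $j \neq 1$, say $j = 2$, I would first reduce to the previous situation by eliminating $p$. The plan is to handleslide $\beta_2$ over $\beta_1$ along a band routed parallel to a short arc of $\alpha_1$ from $p$ to an adjacent intersection $q \in \alpha_1 \cap \beta_1$ (chosen so that the arc's interior is disjoint from $\beta$). With orientations arranged correctly, the new curve $\beta_2'$ cobounds a bigon with $\alpha_1$ containing $p$ and the parallel copy of $q$; an isotopy collapses this bigon, restoring the condition $\alpha_1 \cap \beta_2 = \emptyset$ while preserving $|\alpha_1 \cap \beta_1| = m$ and the overall generator count. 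The hypothesis that there is at most \emph{one} extra intersection is precisely what ensures that the handleslide introduces only one auxiliary intersection to be paired with $p$, allowing a clean bigon cancellation rather than a cascade.

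The main obstacle is the elimination step in the second case: verifying that the handleslide genuinely removes $p$ with the algebraic intersection numbers matching up so that the bigon can be collapsed by isotopy, all without disturbing the $m$ intersections of $\alpha_1 \cap \beta_1$ or the strongness property. This is a local surgery that requires careful bookkeeping of the local signs at $p$ and $q$ in order to choose the correct band orientation; once this geometric step is in hand, the reduction to the first case and the invocation of Lemma \ref{l: connected sum} and Proposition \ref{p: 1-extendible} proceed as above.
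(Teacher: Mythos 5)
Your first case follows the paper, but the reduction in your second case has a genuine gap, and it is not just a matter of sign bookkeeping. When you handleslide $\beta_2$ over $\beta_1$, the new curve $\beta_2'$ acquires an entire parallel copy of $\beta_1$, and that copy meets $\alpha_1$ in $m$ points. Routing the band along the $\alpha_1$-arc from $p$ to the adjacent point $q \in \alpha_1 \cap \beta_1$ does let you cancel $p$ against the nearby copy of $q$ (indeed, if the band is taken to straddle both crossings no bigon isotopy or sign hypothesis is needed), but the remaining $m-1$ parallel copies of the other points of $\alpha_1 \cap \beta_1$ survive as points of $\alpha_1 \cap \beta_2'$. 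So for $m \ge 2$ you do not restore $\alpha_1 \cap \beta_2' = \emptyset$; you create exactly the cascade you hoped to avoid, and no single bigon collapse repairs it. The paper slides in the opposite direction for precisely this reason: it performs $m$ consecutive handleslides of $\beta_1$ over $\beta_2$, guided along the $\alpha_1$-arcs from $p_i$ to $q$ for $i = m, \dots, 1$. Since $\beta_2$ meets $\alpha_1$ only once, each slide deletes the point $p_i$ and introduces no new intersection with $\alpha_1$, so after $m$ slides $\alpha_1$ has a single intersection point and Lemma \ref{l: connected sum} applies.

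Second, you assert without argument that the moves preserve ``the overall generator count.'' Handleslides do not preserve $\abs{\SS(H)}$ in general --- they only preserve the underlying manifold --- and this preservation is the substantive content of the lemma, since it is what makes strongness descend to the summands. The paper proves it by an explicit bijection between perfect matchings: the slides delete the $m$ edges between $a_1$ and $b_1$ in $G(H)$ and insert $m$ parallel copies, attached to $b_1$, of each edge between $a_i$ and $b_2$; a matching of $G(H)$ using the $k$-th edge $a_1b_1$ together with an edge $e$ from $a_i$ to $b_2$ corresponds to the matching of $G(H')$ using $q$ together with the $k$-th new copy of $e$. An argument of this kind is needed in your write-up as well, even in the $m=1$ case where your geometric step does go through; the choice of band orientation at $p$ and $q$ is not the main obstacle.
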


\begin{proof}
Without loss of generality, assume that $\alpha_1$ has $m \ge 1$ intersection points with $\beta_1$.  If $\alpha_1$ meets only $\beta_1$, then the result follows directly from Lemma \ref{l: connected sum} and induction on the genus, as in the proof of Proposition \ref{p: 1-extendible}.  Suppose instead that $\alpha_1$ meets another curve $\beta_2$.  Label the intersection points along $\alpha_1$ consecutively by $p_1,\dots,p_m \in \beta_1$ and $q \in \beta_2$.  Perform $m$ consecutive handleslides of $\beta_1$ over $\beta_2$, guided along the oriented $\alpha_1$-arc from $p_i$ to $q$ for $i=m,\dots,1$ in turn.  Let $\beta_1'$ denote the resulting curve and $H'$ the resulting Heegaard diagram.  Observe that $\alpha_1$ has a single intersection point in $H'$.  Applying Lemma \ref{l: connected sum} and induction on the genus to $H'$ establishes the existence of handleslides and isotopies converting $H'$ into a 1-extendible Heegaard diagram of the required form with the property that $\abs{\SS(H')} = \abs{\SS(H_1 \conn H_2)}$.

It remains to establish that $\abs{\SS(H')} = \abs{\SS(H)}$.  To do so, we exhibit a bijection between the perfect matchings in $G(H')$ and $G(H)$.  Let $G' \subset G(H)$ denote the subgraph obtained by removing the $m$ edges between $a_1$ and $b_1$.  Observe that $G(H')$ is constructed from $G'$ by inserting $m$ parallel edges between $a_i$ and $b_1$ for each edge between $a_i$ and $b_2$ in $G(H)$.  Thus, $G'$ is a common subgraph of $G(H)$ and $G(H')$.  In particular, we obtain a trivial bijection between the perfect matchings of $G(H)$ and $G(H')$ contained in this common subgraph.  Consider another perfect matching in $G(H)$.  Then it uses the $k\Th$ edge between $a_1$ and $b_1$ for some $1 \le k \le m$.  It also has an edge $e$ from $a_i$ to $b_2$ for some $i > 1$.  We construct a perfect matching in $G(H')$ by removing these two edges, putting in the edge from $a_1$ to $b_2$, and putting in the $k$-th new edge from $a_i$ to $b_1$ that corresponds to $e$.  It is clear that this construction sets up a bijection between the perfect matchings in $G(H)$ and $G(H')$ not contained in $G'$, and so completes the required bijection between perfect matchings in $G(H)$ and $G(H')$.
\end{proof}

For the remainder of this section, let $H_0$ denote the standard genus-$1$ Heegaard diagram of $\RP^3$. Recall that the minimum vertex degree of a graph $G$ is denoted $\delta(G)$, and $\delta(H) = \delta(G(H))$ for a Heegaard diagram $H$.

\begin{lemma}
\label{l: standard form}
Let $H$ be a Heegaard diagram for a rational homology sphere $Y$.  Then there exists a sequence of isotopies, handleslides, and destabilizations converting $H$ into a 1-extendible Heegaard diagram $H' = (\conn^n H_0) \conn H''$ such that $\abs{\SS(H)}=\abs{\SS(H')}$ and $\delta(H'') \ge 3$.  If $H$ is strong, then so are $H'$ and $H''$. \qed
\end{lemma}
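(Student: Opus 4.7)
The approach is induction on $g(H)$, combining Proposition \ref{p: 1-extendible} with Lemma \ref{l: decompose}. As a first step, I would apply Proposition \ref{p: 1-extendible} to replace $H$ with a $1$-extendible diagram of the same $|\SS(H)|$, preserving strongness if $H$ is strong. Since $Y$ is a rational homology sphere, $M(H)$ has no zero row or column, so $\delta(H) \ge 1$; in particular $\delta = 0$ cannot occur. If already $\delta(H) \ge 3$, I set $n=0$ and $H'' = H$ and stop. The base case $g(H) = 0$ is trivial, with $\delta(H'') \ge 3$ holding vacuously.

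Otherwise, there is an attaching curve, say $\alpha_1$, of degree $1$ or $2$ in $G(H)$ (the case of a low-degree $\beta$-vertex is symmetric). After relabeling the $\beta$ curves, $\alpha_1$ meets $\beta_1$ in $m \in \{1,2\}$ points and meets at most one other curve $\beta_2$ in one point, which is precisely the hypothesis of Lemma \ref{l: decompose}. The lemma then produces a $1$-extendible decomposition $H_1 \conn H_2$ with $g(H_1) = 1$, $|\SS(H_1)| = m$, and $|\SS(H_1 \conn H_2)| = |\SS(H)|$, and preserving strongness. Because $H_1(Y) \cong H_1(Y_1) \oplus H_1(Y_2)$ and $Y$ is a rational homology sphere, both $Y_1$ and $Y_2$ are rational homology spheres. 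When $m=1$, we have $Y_1 = S^3$, and $H_1$ destabilizes after a handleslide cancelling its unique intersection point, reducing the total genus by one. When $m=2$, the two intersection points of $H_1$ must have the same sign, since opposite signs would yield $Y_1 \cong S^1 \times S^2$, violating the rational-homology-sphere condition; hence $H_1$ coincides with $H_0$ up to reversing the orientation of $S$ or of a curve, which does not change $|\SS|$ or strongness.

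In either case I apply the induction hypothesis to $H_2$, which has strictly smaller genus and presents a rational homology sphere, obtaining $H_2 \simeq (\conn^{n'} H_0) \conn H''$ with $\delta(H'') \ge 3$, and reassembling gives the standard form for $H$. The main subtlety is the $m=2$ case, where one must rule out the possibility that the split-off genus-$1$ summand is anything other than $H_0$; this is where the rational-homology-sphere hypothesis, propagated to connected summands, is essential. Strongness is transported through the induction by Lemma \ref{l: decompose} and by the fact that $H_0$ is itself strong, so if $H$ is strong then so are $H'$ and $H''$.
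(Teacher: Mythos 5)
Your proposal is correct and follows essentially the same route as the paper: induct on genus, use Proposition \ref{p: 1-extendible} to get 1-extendibility, stop if $\delta \ge 3$, and otherwise apply Lemma \ref{l: decompose} to a degree-$\le 2$ curve, identifying the genus-1 summand as the standard $S^3$ or $\RP^3$ diagram (destabilizing in the former case) before inducting on the complementary summand. Your extra remarks—that the summands are again rational homology spheres and that the two-point summand must have coherent intersections (else $S^1\times S^2$)—are exactly the points the paper leaves implicit, so there is no gap; only note that in the $m=1$ case no handleslide is needed, as the one-point genus-1 summand destabilizes directly.
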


\begin{proof}
We proceed by induction on the genus $g$ of $H$. The result is true if $g = 0$, so suppose that $g > 0$ and the result holds for Heegaard diagrams of genus less than $g$.
By Proposition \ref{p: 1-extendible}, we may assume that $H$ is 1-extendible.  If $\delta(H) \ge 3$, then the desired result follows at once.  Otherwise, $\delta(H) \le 2$.  Apply Lemma \ref{l: decompose} to an attaching curve in $H$ that contains at most two intersection points.  In the composite Heegaard diagram $H_1 \conn H_2$ guaranteed by Lemma \ref{l: decompose}, $H_1$ is the standard genus-$1$ diagram for $S^3$ or $\RP^3$.  Destabilize $H_1$ in the former case, and apply the induction hypothesis to $H_2$ to complete the induction step.
\end{proof}

\begin{lemma}
\label{l: finite graph}
There exist finitely many 1-extendible bipartite graphs $G$ with $\delta(G) \ge 3$ and a bounded number of perfect matchings.
\end{lemma}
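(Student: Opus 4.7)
The plan is to show that $\mu(G)$, the number of perfect matchings of $G$, grows at least linearly in $|V(G)|$ under the hypotheses, while each edge multiplicity is bounded by $\mu(G)$. Together with the finiteness of the set of bipartite multigraphs on a bounded vertex set with bounded edge multiplicities, this yields the lemma. The multiplicity bound is immediate: for a multi-edge slot $a_i b_j$ of multiplicity $m_{ij}$, 1-extendibility gives a perfect matching of $G - a_i - b_j$, and combining such a matching with the $m_{ij}$ choices of parallel copy yields $m_{ij}$ distinct perfect matchings of $G$. Hence $\mu(G) \geq m_{ij}$, so $\mu(G) \leq N$ forces every edge multiplicity to be at most $N$.

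For the vertex bound, I would invoke the classical bipartite ear decomposition theorem (cf.\ \cite{LovaszPlummerBook}): every connected 1-extendible bipartite graph admits a chain $G_0 \subset G_1 \subset \cdots \subset G_r = G$ of 1-extendible bipartite subgraphs, where $G_0$ is a single edge and each $G_{i+1}$ is obtained from $G_i$ by attaching an odd-length ear $P_{i+1}$ with endpoints $u_i \in A \cap V(G_i)$, $v_i \in B \cap V(G_i)$, and interior vertices (if any) outside $V(G_i)$. The central identity is
\[
\mu(G_{i+1}) = \mu(G_i) + \mu(G_i - u_i - v_i),
\]
obtained by partitioning the perfect matchings of $G_{i+1}$ according to whether the interior vertices of $P_{i+1}$ are covered by the ``even-indexed'' or the ``odd-indexed'' edges of the ear: the first choice leaves $u_i, v_i$ matched inside $G_i$, while the second matches $u_i, v_i$ through the ear and forces a perfect matching of $G_i - u_i - v_i$ on the complement. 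Hetyei's strong Hall condition for 1-extendible bipartite graphs \cite[Theorem 4.1.1]{LovaszPlummerBook} ensures $|N_{G_i}(S)| \geq |S|+1$ for every proper non-empty $S \subsetneq A \cap V(G_i)$, and from this $G_i - u_i - v_i$ satisfies Hall's condition and admits a perfect matching. Thus $\mu(G_{i+1}) \geq \mu(G_i) + 1$ and, iterating, $\mu(G) \geq 1 + r$.

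A standard count gives $r = |E(G)| - |V(G)| + 1$ for connected $G$ (each ear of length $\ell$ adds $\ell$ edges and $\ell - 1$ interior vertices), while $\delta(G) \geq 3$ gives $|E(G)| \geq \tfrac{3}{2}|V(G)|$. Combining, $\mu(G) \geq \tfrac{1}{2}|V(G)| + 2$, so $\mu(G) \leq N$ forces $|V(G)| \leq 2N - 4$; together with the multiplicity bound, only finitely many bipartite multigraphs can meet the hypotheses. For disconnected $G$ the analysis is applied componentwise: since each component is 1-extendible with $\delta \geq 3$, it already has at least five perfect matchings, so the number of components is at most $\log_5 N$. The main obstacle is verifying the per-ear increment $\mu(G_{i+1}) \geq \mu(G_i) + 1$, which reduces to producing a perfect matching of $G_i - u_i - v_i$; this is precisely where the strong Hall condition enjoyed by 1-extendible bipartite graphs plays a decisive role.
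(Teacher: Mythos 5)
Your proof is correct and takes essentially the same route as the paper's: the paper simply cites \cite[Theorem 7.6.2]{LovaszPlummerBook} for the key inequality that the number of perfect matchings is at least $m-n+2$, which you re-derive via the bipartite ear decomposition and the strong Hall condition, and then both arguments combine this with $m \ge \tfrac{3}{2}n$ coming from $\delta(G) \ge 3$ to bound the size of $G$. The only (harmless) slip is in your componentwise remark: since these are multigraphs, a component can be two vertices joined by three parallel edges, so a component with $\delta \ge 3$ is only guaranteed at least three perfect matchings rather than five, which still bounds the number of components and leaves the argument intact.
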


\begin{proof}
Fix a natural number $d$ and suppose that $G$ is a 1-extendible bipartite graph that contains $d$ or fewer perfect matchings.  Let $n$ denote the number of vertices and $m$ the number of edges of $G$.  Then \cite[Theorem 7.6.2]{LovaszPlummerBook} establishes that
\begin{equation}
\label{e: matching bound}
d  \ge m - n + 2.
\end{equation}
In addition, $m \ge 3 n /2$, since $\delta(G) \ge 3$.
Applying \eqref{e: matching bound}, we obtain
\[
n \le 2(d-2) \quad \textup{and} \quad m \le d + n-2 \le 3(d-2).
\]
Since both of $m$ and $n$ are bounded in terms of $d$, $G$ is one of finitely many graphs.
\end{proof}

\begin{lemma}
\label{l: finite homeo}
For any graph $G$, there exist finitely many Heegaard diagrams $H$ with $G(H)=G$.
\end{lemma}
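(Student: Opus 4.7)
The plan is to show that any Heegaard diagram $H$ with $G(H) = G$ is determined, up to homeomorphism of the Heegaard surface, by a finite amount of combinatorial data depending only on $G$, from which the lemma will follow. First I would observe that the genus of $S$ equals $g = |A| = |B|$ and that the number of intersection points $|\alpha \cap \beta|$ equals $|E(G)|$, so both are fixed by $G$.

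Next I would view $\Gamma = \alpha \cup \beta$ as a $4$-valent graph embedded in $S$: its vertices are the intersection points of $\alpha$ with $\beta$, and its edges are the arcs of $\alpha_i$ and $\beta_j$ between consecutive intersection points. Up to homeomorphism of $S$, the pair $(S, \Gamma)$ is controlled by a ribbon graph structure, i.e., a cyclic ordering of the four edges incident to each vertex of $\Gamma$. Equivalently, this structure is encoded by specifying, for each $a_i \in A$, the cyclic order in which the edges of $G$ at $a_i$ appear along $\alpha_i$, and similarly for each $b_j \in B$. There are at most $\prod_i (\deg(a_i) - 1)! \cdot \prod_j (\deg(b_j) - 1)!$ such orderings, which is finite. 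Choices of orientation for $S$ and for each attaching curve, together with the induced signs at the intersections, give only finitely many further options.

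The main obstacle will be that $\Gamma$ need not fill $S$: some component of $S \setminus \Gamma$ could be a positive-genus surface with boundary rather than a disk. However, the ribbon structure determines the boundary of each complementary region as a closed edge-walk in $\Gamma$, and the number of such regions is bounded by $|E(G)|$. The homeomorphism type of each region is then pinned down by a single non-negative integer, its genus, and the total genus contribution is constrained by the requirement that the ambient surface have genus $g$. This yields only finitely many additional choices. Assembling these finite contributions bounds the total number of Heegaard diagrams with $G(H) = G$, establishing the lemma.
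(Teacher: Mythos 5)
Your proposal is correct and takes essentially the same approach as the paper: the paper constructs the regular neighborhood $N$ of $\alpha \cup \beta$ by plumbing annuli according to the cyclic orderings of intersection points along each curve and the signs $\epsilon_k$ (exactly your ribbon-graph data), and then observes there are finitely many ways to glue surfaces-with-boundary onto $N$ to obtain a closed genus-$g$ surface. The only minor imprecision on your side is that a complementary region need not have connected boundary, so it is classified by its genus together with the collection of boundary walks of $\Gamma$ it caps off rather than by its genus alone; since this is still a finite amount of data once the genus of $S$ is fixed, the argument is unaffected.
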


\begin{proof}
We may assume that $G$ has a bipartition $V(G) = A \sqcup B$ with $\abs{A}=\abs{B}=g$.
Label the edges of $G$ by $e_1, \dots, e_k$. Let $\alpha_1, \dots, \alpha_g, \beta_1, \dots, \beta_g$ be oriented copies of $S^1$, and let $N(\alpha_i)$ (resp.~$N(\beta_i)$) be the total space of a trivial $I$-bundle over $\alpha_i$ (resp.~$b_i$). Up to homeomorphism, there are finitely many ways to choose points $x_1, \dots, x_k, y_1, \dots, y_k$, where if $e_k$ is an edge connecting $a_{i_k}$ and $b_{j_k}$, then $x_k \in \alpha_{i_k}$ and $y_k \in \beta_{j_k}$. Given such a choice and a choice of $(\epsilon_1, \dots, \epsilon_k) \in \{\pm 1\}^k$, form a possibly disconnected, oriented surface-with-boundary $N$ by plumbing together the annuli $N(\alpha_i), N(\beta_j)$ so that $x_k$ and $y_k$ are identified and $\alpha_{i_k}$ and $\beta_{j_k}$ meet with sign $\epsilon_k$ at that point. There are finitely many ways to glue surfaces-with-boundary to $N$ along their common boundaries to obtain a closed, oriented surface $S$ of genus $g$. Thus, up to homeomorphism, there are finitely many tuples $(S,\alpha,\beta)$, where $S$ is a closed, oriented surface of genus $g$ and $\alpha$ and $\beta$ are $g$-tuples of curves in $S$ whose intersection graph is given by $G$, and in particular finitely many such Heegaard diagrams.
\end{proof}

\begin{proof}[Proof of Theorem \ref{t: finite}]
Fix a natural number $d$ and suppose that $Y$ is a rational homology sphere with $M(Y) \le d$.  By Lemma \ref{l: standard form}, $Y$ has a 1-extendible Heegaard diagram $H = (\conn^n H_0) \conn H'$ with $M(Y) = \abs{\SS(H)}=2^n \abs{\SS(H')}$ and $\delta(H') \ge 3$.  Thus, $n \le \log_2 d$ and $G(H')$ contains at most $d$ perfect matchings.  By Lemma \ref{l: finite graph}, there exist finitely many possibilities for $G(H')$, so by Lemma \ref{l: finite homeo}, there exist finitely many possibilities in turn for $H'$.  Therefore, there exist finitely many possibilities for $H$ and hence for $Y$, as required.
\end{proof}

We now turn to the proof of Theorem \ref{t: le8}, which asserts that every strong L-space with determinant $\le 8$ is the branched double cover of an alternating link in $S^3$. In the proof, we apply an estimate on the number of perfect matchings in a cubic bipartite graph due to Voorhoeve \cite{VoorhoevePermanents}.  A graph is {\em cubic} if every vertex has degree 3.
The proof of \cite[Theorem 8.1.7]{LovaszPlummerBook} establishes the following version of Voorhoeve's result:
\begin{lemma}
\label{l: voorhoeve}
Define $h\co \bZ^+ \to \bZ^+$ recursively by $h(1) = 2$ and $h(g) = \lceil 4h(g-1)/3 \rceil$, and define $f\co \bZ^+ \to \bZ^+$ by $f(g) = \lceil 3 h(g) / 2 \rceil$.  Then a cubic bipartite graph on $2g$ vertices contains at least $f(g)$ perfect matchings. In particular, a cubic bipartite graph with 8 or more vertices contains at least $f(4)=9$ perfect matchings.\qed
\end{lemma}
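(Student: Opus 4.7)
The plan is to prove this by induction on $g$, following the Voorhoeve-style argument that underlies \cite[Theorem 8.1.7]{LovaszPlummerBook}. The number of perfect matchings of a cubic bipartite graph $G$ with bipartition $(U,V)$, $|U|=|V|=g$, equals the permanent of its $g \times g$ bipartite adjacency matrix $A$, which is a $(0,1)$-matrix with every row and column sum equal to $3$. Voorhoeve's key insight is to introduce a slightly larger class of ``defective'' matrices that is stable under the single permanent-expansion step used at each induction level.

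First I would set up the inductive framework on this broader class. Given any vertex $v \in U$ with neighbors $u_1,u_2,u_3 \in V$, the Laplace-type expansion along $v$'s row gives
\[
\per(A) = \per(A^{v,u_1}) + \per(A^{v,u_2}) + \per(A^{v,u_3}),
\]
where $A^{v,u_i}$ is obtained by deleting the row and column indexed by $v$ and $u_i$ respectively. Each resulting $(g-1) \times (g-1)$ matrix has two rows of sum $2$ and two columns of sum $2$, with the other row and column sums still equal to $3$; this is precisely the defective class. I would define $h(g)$ as a lower bound on the permanent of such defective matrices, so that the cubic-case expansion relates $f(g)$ to values of $h$ at $g-1$; an averaging over choices of expansion row then yields the multiplier $3/2$ in $f(g) = \lceil 3h(g)/2 \rceil$.

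For the recursion on $h$ itself, I would expand inside a defective matrix, choosing a row whose sum is $2$ rather than $3$ (a defective row): this yields a two-term sum of permanents of smaller defective matrices, and comparing this with the alternative three-term expansion along a full row and averaging produces the $4/3$ factor underlying the recursion $h(g) = \lceil 4h(g-1)/3 \rceil$. The base case $h(1) = 2$, $f(1) = 3$ is immediate, since the unique cubic bipartite ``graph'' on two vertices is the triple edge with three perfect matchings.

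The main obstacle is the bookkeeping of defective matrices: one must verify that the defective class is genuinely closed under the deletion operation encountered during induction, and that the averaging over expansion choices reproduces exactly the constants $4/3$ and $3/2$ rather than something weaker. Once the recursions are confirmed, the bound $f(4) = 9$ follows by direct computation of the sequence $h(1)=2,\ h(2)=\lceil 8/3\rceil=3,\ h(3)=\lceil 12/3\rceil=4,\ h(4)=\lceil 16/3\rceil=6$, whence $f(4) = \lceil 18/2\rceil = 9$.
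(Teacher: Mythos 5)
Your overall route is the same as the paper's, which gives no independent argument but simply quotes the proof of \cite[Theorem 8.1.7]{LovaszPlummerBook} (Voorhoeve's theorem); the issue is that your rendering of that proof has a genuine gap at its core. The defective class relevant to $h$ consists of nonnegative integer matrices with exactly \emph{one} row of sum $2$ and \emph{one} column of sum $2$ and all other line sums $3$ (equivalently, a cubic bipartite multigraph minus a single edge; note $h(1)=2$ is the matrix $(2)$, a doubled edge). The minors $A^{v,u_i}$ you produce by Laplace expansion instead have \emph{two} rows and \emph{two} columns of sum $2$, and for that larger class the claimed bounds are false: deleting a pair of adjacent vertices from $K_{3,3}$ yields the $4$-cycle, which has only $2$ perfect matchings, whereas your recursion would force $h(2)=3$. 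So the closure property you flag as "the main obstacle" genuinely fails for the class as you set it up, and the induction breaks at its first step. Relatedly, your bookkeeping of sizes is off: $f(g)=\lceil 3h(g)/2\rceil$ couples the cubic case to $h$ at the \emph{same} size $g$, not $g-1$. The correct $3/2$ step is not an expansion into minors but the multilinearity identity $\per(A)=\per(A-E_{ij})+\per\bigl(A(i\mid j)\bigr)$, where $E_{ij}$ is a matrix unit at a positive entry and $A(i\mid j)$ the corresponding minor: each $A-E_{ij}$ lies in the one-defect class, and summing over the three units in row $i$ gives $2\per(A)=\sum_j a_{ij}\per(A-E_{ij})\ge 3h(g)$, whence $\per(A)\ge f(g)$.

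The $4/3$ recursion is the actual content of Voorhoeve's lemma, and it cannot be dispatched by "comparing the two-term and three-term expansions and averaging": expanding a one-defect matrix along its defective row again leaves the class (the deleted column of sum $3$ creates new row deficits), and Voorhoeve's proof requires a careful case analysis exploiting linearity of the permanent in the defective row; as written, this step of your proposal is an unverified assertion rather than an argument. Two further, smaller points: the cubic subgraphs to which the lemma is applied in the proof of Theorem \ref{t: le8} may contain parallel edges, so you must work throughout with nonnegative integer matrices with all line sums $3$ (as Voorhoeve does), not $(0,1)$-matrices -- indeed the induction needs this larger class anyway, as the base case already shows. Your numerical evaluation $h(1)=2$, $h(2)=3$, $h(3)=4$, $h(4)=6$, $f(4)=9$ is correct.
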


\begin{proof}[Proof of Theorem \ref{t: le8}]
Choose a 1-extendible, strong Heegaard diagram $H$ of minimum genus $g$ presenting $Y$.  If $g=1$, then $Y$ is the branched double cover of an alternating two-bridge link.  If $g = 2$, then the result follows from Theorem \ref{t: genus2} (proven in Section \ref{sec: genus2} and independent of this result).  If there exists a sequence of isotopies and handleslides converting $H$ into a connected sum of strong Heegaard diagrams $H_1 \conn H_2$, then by induction on $g$, $H_i$ presents $\Sigma(L_i)$ for some non-split alternating link $L_i$, and then $Y \cong \Sigma(L_1 \conn L_2)$ exhibits $Y$ in the stated form.

Thus, unless the desired conclusion holds, we may assume henceforth that $g \ge 3$ and that the hypothesis of Lemma \ref{l: decompose} does not hold; in particular, every vertex of $G(H)$ has degree at least $3$, and no vertex of degree $3$ is incident with parallel edges (two edges with the same pair of endpoints).  We seek a contradiction to these conditions under the assumption that $\det(Y) \le 8$.

First, suppose that $g=3$.  Since $M(H)$ is a P\'olya matrix, it contains at least one zero entry, as noted in Section \ref{sec: preliminaries}.  Since the condition of Lemma \ref{l: decompose} does not hold, no row or column of $M(H)$ contains more than one zero, and if a row or column contains a zero, then its non-zero entries are at least two in absolute value. It follows that up to permuting its rows and columns, $M(H)$ {\em dominates} one of the following three matrices, in the sense that the absolute values of its entries bound from above those of the corresponding matrix:
\[
\left( \begin{matrix} 0 & 2 & 2 \\ 2 & 0 & 2 \\ 2 & 2 & 0\end{matrix} \right), \left( \begin{matrix} 1 & 2 & 2 \\ 2 & 0 & 2 \\ 2 & 2 & 0\end{matrix} \right), \left( \begin{matrix} 1 & 1 & 2 \\ 1 & 1 & 2 \\ 2 & 2 & 0\end{matrix} \right).
\]
If a P\'olya matrix $P$ dominates a non-negative matrix $N$, then $\abs{\det(P)} = \per(\abs{P}) \ge \per(N)$.  It follows that
\[
\abs{\SS(H)} = \abs{\det(M(H))} \ge \min \{16,20,16\} = 16,
\]
a contradiction.

Next, suppose that $g = 4$.  We argue that $G(H)$ contains a cubic subgraph on 8 vertices, which by Lemma \ref{l: voorhoeve} is a contradiction.  Inequality \eqref{e: matching bound} implies that $G(H)$ contains at most 14 edges, and by hypothesis it contains at least 12 edges.  If it has 12 edges, then $G(H)$ is the desired subgraph.  If it has 13 edges, then there exists a unique vertex of degree 4 in each of $A$ and $B$.  Since no vertex of degree $3$ is incident with parallel edges, it follows that there exists an edge $e$ between the vertices of degree $4$, and then $G(H) \minus e$ is the desired subgraph.  If it has 14 edges, then the degree sequences of vertices in $A$ and in $B$ belong to $\{(3,3,3,5),(3,3,4,4)\}$.  Again, since no vertex of degree $3$ is incident with parallel edges, every vertex of degree $4$ or more is incident with another such vertex; furthermore, if there are two vertices of degree $5$, then there are parallel edges between them.  In any case, we can locate edges $e,f$ so that $G(H) \minus \{e,f\}$ is the desired subgraph.

Lastly, suppose that $g \ge 5$.  $G(H)$ cannot contain a cubic subgraph on $2g$ vertices by Lemma \ref{l: voorhoeve}, because then it would contain more than 8 perfect matchings, a contradiction.  Thus, $G(H)$ contains at least $3g+1$ edges.  Since $|\SS(H)| \le 8$, inequality \eqref{e: matching bound} implies that $g \le 5$.  Consequently, $g=5$, there are vertices $a \in A$ and $b \in B$ of degree four, and all other vertices have degree three and are not incident with parallel edges.  If there exists an edge $e = (a,b)$, then $G(H) \minus e$ is a cubic subgraph on 10 vertices, a contradiction.  Therefore, $a$ and $b$ are non-adjacent and $G(H)$ has no parallel edges.  Thus, $G(H) \minus \{a,b\}$ is a 2-regular bipartite graph.  It is easy to see in this case that any two-edge matching that uses vertices $a$ and $b$ extends to a perfect matching of $G(H)$.  However, there are 16 such two-edge matchings, whereas $G(H)$ has at most 8 perfect matchings, a contradiction. This concludes the proof that $Y \cong \Sigma(L)$ for some alternating link $L \subset S^3$.

Finally, the determinant of an alternating link is greater than or equal to its crossing number, with equality only for $(2,n)$-torus links \cite{CrowellNonalternating}.  It follows that $L$ has at most seven crossings or is the $(2,8)$-torus link. The knot tables indicate that all prime, alternating links through seven crossings with determinant $\le 8$ are two-bridge links.  Therefore, $L$ is a connected sum of two-bridge links, and $\Sigma(L)$ is a connected sum of lens spaces.
\end{proof}

\section{Strong diagrams of genus two} \label{sec: genus2}

The purpose of this section is to prove Theorem \ref{t: genus2}, describing all strong L-spaces admitting strong Heegaard diagrams of genus $2$.

\subsection{Coherent multicurves in an annulus}

We begin with some technical but elementary statements concerning curves in an annulus that will enable us to recognize certain standard configurations within a strong Heegaard diagram.

\begin{construction} \label{const: annulus}
Fix orientations on the circle $S^1$ and the interval $I$. Let $A = S^1 \times I$ denote the annulus, equipped with the product orientation. For integers $p_1, q_1, p_2, q_2$, let $\bm\gamma(p_1,q_1,p_2,q_2)$ denote the pair of oriented multicurves $(\gamma_{p_1,q_1}, \gamma_{p_2,q_2})$ obtained as follows: choose distinct $s_1 \ne s_2 \in S^1$ and $t_1 < t_2 \in I$, and set
\[
\gamma_{p_i,q_i} = p_i(S^1 \times \{t_i\}) + q_i(\{s_i\} \times I),
\]
where we do not allow the parallel copies of $\pm S^1 \times \{t_1\}$ (resp.~$\pm \{s_1\} \times I$) to overlap or interleave with the parallel copies of $\pm S^1 \times \{t_2\}$ (resp.~$\pm \{s_2\} \times I$), and we require $\gamma_{p_1,q_1}$ and $\gamma_{p_2,q_2}$ to meet transversally. (See Figure \ref{fig:annuluslemma}.)
\end{construction}

\begin{figure}
\labellist
 \pinlabel {{\color{red} $\gamma_{-2,3}$}} [Bl] at 135 203
 \pinlabel {{\color{blue} $\gamma_{3,2}$}} [Bl] at 135 175
 \pinlabel {{\color{red} $\gamma_{2,3}$}} [Bl] at 407 203
 \pinlabel {{\color{blue} $\gamma_{3,2}$}} [Bl] at 407 175
\endlabellist
\includegraphics[scale=0.75]{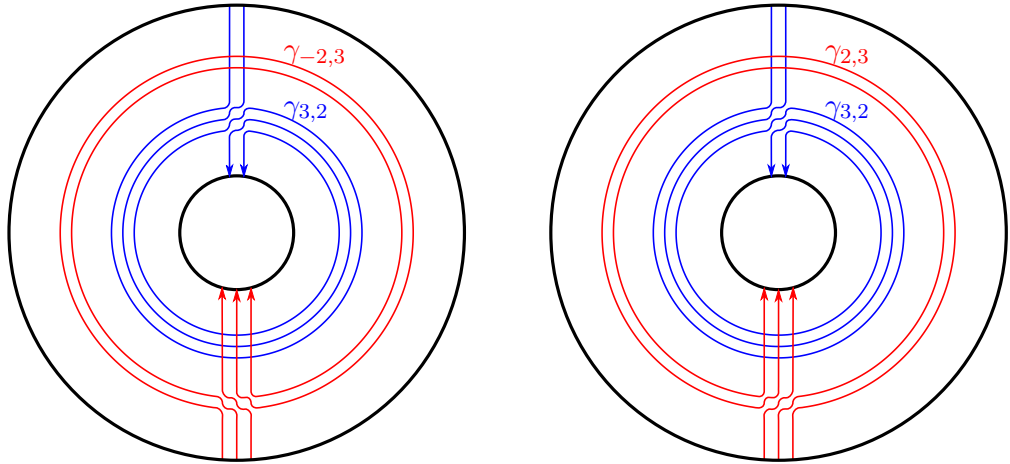}
\caption{The configurations $\bm\gamma(-2,3,3,2)$ and $\bm\gamma(2,3,3,2)$ in $S^1 \times I$. Note that the former is in minimal position while the latter is not.} \label{fig:annuluslemma}
\end{figure}

In Construction \ref{const: annulus}, we have
\[
\gamma_{p_1,q_1} \cdot \gamma_{p_2,q_2} = p_1 q_2 - q_1 p_2 \quad \text{and} \quad \abs{\gamma_{p_1,q_1} \cap \gamma_{p_2,q_2}} = \abs{p_1 q_2} + \abs{q_1 p_2}.
\]
In particular, the pair $(\gamma_{p_1,q_1}, \gamma_{p_2,q_2})$ is in minimal position except when $p_1 q_2$ and $q_1 p_2$ are either both positive or both negative.


The ambient isotopy class (fixing a point on each component of $\partial A$) of each multicurve $\gamma_{p_i,q_i}$ depends only on $(p_i,q_i)$, but the full configuration $\bm\gamma(p_1,q_1, p_2,q_2)$ depends a priori on more than just the isotopy classes of the individual multicurves. However, the following lemma says that when the pair $(\gamma_{p_1,q_1},\gamma_{p_2,q_2})$ is in minimal position, the configuration is uniquely characterized up to homeomorphism.

\begin{lemma}
\label{l: annulus}
Let $r_1, \dots, r_k, s_1, \dots, s_l$ be points in $S^1$, ordered cyclically according to the standard orientation of $S^1$. Suppose that we are given properly embedded, oriented multicurves $\alpha = \alpha_1 \cup \dots \cup \alpha_k$ and $\beta = \beta_1 \cup \dots \cup \beta_l$ in $A$ satisfying the following properties:
\begin{itemize}
\item
For some fixed $[a] \in \Z/k$, for each $i \in \{1, \dots, k\}$, $\alpha_i$ is a path from $(r_i, 0)$ to $(r_{i+a},1)$ (indices modulo $k$), and any two paths $\alpha_i, \alpha_{i'}$ are disjoint.
\item
For some fixed $[b] \in \Z/l$, for each $j \in \{1, \dots, l\}$, $\beta_j$ is a path from $(s_j, 0)$ to $(s_{j+b},1)$ (indices modulo $l$), and any two paths $\beta_j, \beta_{j'}$ are disjoint.
\item
The multicurves $\alpha$ and $\beta$ intersect transversally and coherently.
\end{itemize}
Then for some integers $a$ and $b$ restricting to the given classes $[a]$ and $[b]$ and satisfying $\abs{al-bk} = \abs{\alpha \cap \beta}$, there is a homeomorphism of $A$ taking $(\alpha,\beta)$ to $\bm\gamma(a,k,b,l)$.
\end{lemma}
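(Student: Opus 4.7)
The plan is to proceed in three stages: reduce to minimal position via coherence, simultaneously normalize $\alpha$ and $\beta$ by ambient homeomorphisms of $A$, and then compute the intersection number from the resulting standard model. First, coherence forces $(\alpha, \beta)$ into minimal position: any innermost bigon bounded by a subarc of some $\alpha_i$ and of some $\beta_j$ would contribute two corner intersection points of opposite local sign, and the same holds for any half-bigon with a side on $\partial A$; neither is compatible with coherence, so all geometric intersection numbers are realized.

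Next, I would normalize $\alpha$ via the universal cover $\tilde A = \R \times I$. Lifting $\alpha$, one checks that the disjointness of the full $\Z$-orbit of $\tilde \alpha$ forces every arc component and every translate to share a common horizontal displacement $a \in \Z$ lifting $[a] \in \Z/k$ (otherwise two lifted arcs of differing slope would cross). A $\Z$-equivariant straightening in $\tilde A$ descends to an ambient isotopy of $A$ placing $\alpha$ in the standard form $\gamma_{a,k}$. Cutting $A$ along $\alpha$ then yields $k$ closed disks $D_1, \dots, D_k$, each with boundary split into two $\alpha$-sides and two $\partial A$-sides. In each $D_i$, the arcs of $\beta \cap D_i$ have endpoints on $\partial D_i$; minimal position forbids both endpoints of such an arc from lying on the same side of $\partial D_i$, and coherence forces all arcs crossing between the two $\alpha$-sides to do so in the same direction. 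The standard classification of disjoint properly embedded arcs in a marked disk then lets me isotope $\beta \cap D_i$ within $D_i$, fixing $\partial D_i$ (and hence $\alpha \cap \beta$) pointwise, to a rectilinear configuration. Reassembling these rectified pieces across the $\alpha$-sides and applying the same universal-cover analysis for $\beta$ produces $\beta = \gamma_{b, l}$ for an integer $b$ lifting $[b]$.

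After these normalizations $(\alpha, \beta)$ coincides with $\bm\gamma(a, k, b, l)$; since the pair $(\gamma_{a,k}, \gamma_{b,l})$ inherits minimal position from $(\alpha, \beta)$, the formulas recorded immediately after Construction~\ref{const: annulus} give $|\alpha \cap \beta| = |al - bk|$, as required.

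The main obstacle is executing the second stage coherently: one must standardize $\beta$ without disturbing $\alpha$, and verify that the disk-by-disk rectifications glue across the cuts into a single multicurve with uniform winding integer $b$. Coherence is used essentially here to preclude wrong-way $\beta$-arcs, which would otherwise prevent the rectified pieces from reassembling into a standard $\gamma_{b, l}$.
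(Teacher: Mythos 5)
Your overall route --- straighten $\alpha$, cut the annulus along it into quadrilaterals, use coherence to force every $\beta$-strand to cross each piece monotonically, rectify piece by piece, and reassemble --- is essentially the paper's. Two points, however, need attention. First, the claim that coherence forces $(\alpha,\beta)$ into minimal position is not correct as argued: a half-bigon has only \emph{one} corner on $\alpha \cap \beta$ (its other corners are endpoints of arcs on $\partial A$), so coherence yields no sign contradiction. For instance, with $k=l=1$, take $\alpha_1$ vertical and let $\beta_1$ cross it once positively and then run around the annulus to its top endpoint: this is coherent but contains a half-bigon, and the single crossing could be removed by sliding an endpoint along $\partial A$. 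Fortunately you never need unconstrained minimal position: an arc of $\beta \cap D_i$ with both endpoints on the same $\alpha$-side is excluded by coherence alone (the two crossings would have opposite signs), both endpoints on the same $\partial A$-side is excluded by the hypothesis that each $\beta_j$ joins the bottom circle to the top circle, and the final count follows directly from coherence, since then $\abs{\alpha \cap \beta} = \abs{\alpha \cdot \beta} = \abs{\gamma_{a,k}\cdot\gamma_{b,l}} = \abs{al-bk}$, with no appeal to minimal position of the original pair.

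Second --- and this is the substantive gap, which you yourself flag as ``the main obstacle'' --- the reassembly step is not carried out. Rectifying $\beta \cap D_i$ rel $\partial D_i$ in each quadrilateral, together with the fact that $\beta$ \emph{by itself} is isotopic to some $\gamma_{b,l}$, does not yet identify the pair $(\alpha,\beta)$ with $\bm\gamma(a,k,b,l)$, nor does it produce the integer $b$: the universal-cover argument applied to $\beta$ alone forgets $\alpha$, and an isotopy standardizing $\beta$ need not preserve the already-standardized $\alpha$. Identifying the pair is exactly where the paper's proof does its real work: after straightening the $\beta$-strands to negative-slope segments in each square of the cut-open rectangle, it normalizes the picture globally by isotopies respecting the edge identification (tracking $m$, the number of strands meeting the cut edge, and $c$, the number of those ending on the top segment) and then applies an explicit shear of the quotient rectangle, which exhibits the pair as $\bm\gamma(a,k,c-m,l)$, i.e.\ produces $b = c-m$ and, with the choice $a \in \{0,\dots,k-1\}$, also guarantees $\abs{al-bk} = \abs{\alpha\cap\beta}$. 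Some version of this global bookkeeping (or an equivalent uniqueness statement for coherent, monotone strand patterns across the cut) must be supplied before your argument is complete.
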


\begin{proof}
For concreteness, assume that the intersection points in $\alpha_i \cap \beta_j$ all have positive sign; the other case is analogous.

Let $a$ be the representative of $[a]$ in $\{0, \dots, k-1\}$. We may identify $A$ with the quotient of the rectangle $R = [0,k] \times [0,1]$ by the relation $(0,y) \sim (k,y)$ for all $y \in [0,1]$, such that $\alpha_i$ is the image of $\{i\} \times [0,1]$; denote the projection map $\pi\co R \to A$. For $i=1, \dots, k$, note that $\pi(i,0) = (r_i,0)$, while $\pi(i,1) = (r_{i+a},1)$ (indices modulo $k$). Therefore, $\beta$ is the image of a collection of oriented arcs $\tilde \beta \subset R$ with endpoints on
\[
\left( [0,1] \times \{0\} \right) \cup \left( [k-a,k-a+1] \times \{1\} \right) \cup \left( \{0,k\} \times [0,1] \right).
\]

For $i=1, \dots, k$, let $R_i$ denote the square $[i-1,i] \times [0,1]$. Since $\alpha$ meets $\beta$ positively, any segment of $\tilde \beta \cap R_i$ (with its inherited orientation) must enter $R_i$ through $\{i\} \times [0,1]$ (or through $[0,1] \times \{0\}$ if $i = 1$) and exit through $\{i-1\} \times [0,1]$ (or through $[a-1,a] \times \{1\}$ if $i=a$). After an ambient isotopy of $R_i$, we may arrange that $\tilde \beta \cap R_i$ is a union of line segments of negative slope. Next, after an ambient isotopy of $R$ that leaves the first coordinate fixed and reparametrizes the second coordinate respecting $\sim$, we may arrange that $\tilde \beta$ is union of oriented line segments of negative slope beginning
on
\[
\left( [0,1] \times \{0\} \right) \cup \left( \{k\} \times [0,1] \right)
\]
and ending on
\[
\left( [k-a,k-a+1] \times \{1\} \right) \cup \left( \{0\} \times [0,1] \right).
\]

Let $m = \abs{\tilde \beta \cap (\{0\} \times [0,1])} = \abs{\tilde \beta \cap (\{k\} \times [0,1])}$. Let $c$ be the number of segments of $\tilde \beta$ that begin on $\{k\} \times [0,1]$ and end on $[k-a,k-a+1] \times \{1\}$. After another ambient isotopy of $R$ that leaves the first coordinate fixed and reparametrizes the second coordinate respecting $\sim$, we may arrange that $\tilde \beta$ is a union of line segments of negative slope as above, with the additional properties that:
\begin{itemize}
\item $m-c$ segments end on $\{0\} \times [0,\frac12]$, and $c$ segments end on $\{0\} \times [\frac12,1]$;
\item $m-c$ segments begin on $\{k\} \times [0,\frac12]$, and $c$ segments end on $\{k\} \times [\frac12,1]$; and
\item $\tilde \beta \cap ([0,k] \times \{\frac12\})$ consists of $l$ points, all in the interior of $R_1$.
\end{itemize}

We may now reparametrize $R/{\sim}$ by the transformation
\[
f(x,y) =
\begin{cases}
(x,y) & y \le \frac12 \\
(x + 2ay-a,y) & y \ge \frac12.
\end{cases}
\]
After this reparametrization, $\beta \cap ([0,k] \times [\frac12,1])/{\sim}$ can be arranged to be vertical, while $\beta \cap ([0,k] \times [\frac12,1])/{\sim}$ winds with positive slope. From here, it is not hard to identify $(\alpha,\beta)$ with $\bm\gamma(a, k, c-m, l)$.
\end{proof}

\begin{construction} \label{const: plumbing}
Returning to the notation in Construction \ref{const: annulus}, suppose that $\gcd(p_1,q_1) = \gcd(p_2,q_2) = 1$. Decompose $S^1$ as a union of two arcs $e_1 \cup e_2$ so that $\partial \gamma_{p_i,q_i} \subset e_i \times \{0,1\}$. Let $R_1$ and $R_2$ be $1$-handles (i.e., copies of $I \times I$), and let $N$ be the oriented surface obtained by attaching $R_1$ and $R_2$ to $A$ along $e_1 \times \{0,1\}$ and $e_2 \times \{0,1\}$, respectively. For $i=1,2$, by attaching $q_i$ parallel cores of $R_i$ to $\gamma_{p_i,q_i}$, we obtain a closed curve $\eta_{p_i/q_i}$. We may canonically orient these curves by requiring $q_1,q_2 \ge 0$ and taking the induced orientation.
\end{construction} 

\def\bconn{\mathbin{\natural}}

\subsection{Conventions for rational tangles} \label{ss:tangles}

We briefly review some basic facts and conventions about rational tangles and their branched double covers; for further information, see \cite[Section 4]{GordonPCMI} and \cite[Chapter 8]{Cromwell:book}).

Consider the sphere $S^2 = \partial B^3$, and fix an equatorial $S^1 \subset S^2$. Let $Q = \{NE, NW, SW, SE\}$ be a set of four points on this $S^1$, ordered cyclically. By a slight abuse of notation, we shall denote by $\mu$ either the (unoriented) arc of $S^1$ joining $SW$ and $NW$ or the arc joining $SE$ and $NE$, and denote by $\lambda$ either the arc joining $NW$ and $NE$ or the arc joining $SW$ and $SE$. For $p/q \in \Q \cup \{1/0\}$, let $R(p/q)$ denote the $p/q$ rational tangle in $B^3$, with endpoints on $Q$; our convention is that the components of $R(1/0)$ (resp.~$R(0/1)$) are pushoffs of the two choices of $\mu$ (resp.~$\lambda$). Note that $R(p/q)$ can be represented with an alternating diagram such that the first crossing encountered when entering from $SW$ or $NE$ is an undercrossing, and the first crossing encountered when entering from $NW$ or $SE$ is an overcrossing, precisely when $p/q \ge 0$.

The branched double cover $\Sigma(S^2,Q)$ is a torus, and $\Sigma(B^3,R(p/q))$ is a solid torus. Let $\tilde \mu$ (resp.~$\tilde \lambda$) be the preimage of $\mu$ (resp.~$\lambda$), which up to isotopy does not depend on the choice above. We orient $\tilde\mu$ and $\tilde\lambda$ such that $\tilde\mu \cdot \tilde\lambda = -1$ when $\Sigma(S^2,Q)$ is oriented as the boundary of $\Sigma(B^3,R(p/q))$. As explained in \cite[Section 4]{GordonPCMI}, for any $p/q \in \Q \cup \{1/0\}$, the curve $p \tilde \mu + q \tilde \lambda$ bounds a compressing disk in $\Sigma(B^3,R(p/q))$.

\subsection{A construction of genus-2 Heegaard diagrams} \label{ss: genus2diagram}

%



As before, for $a_1, a_2, a_3, a_4 \in \Q \cup \{1/0\}$, let $L(a_1, a_2, a_3, a_4)$ denote the link given by the diagram in Figure \ref{f:branchset}, where the four balls are filled in with rational tangles following the conventions described above. If $a_1, a_2, a_3, a_4$ have the same sign, then the diagram obtained in this manner (using alternating diagrams for the rational tangles) is alternating. We now describe a construction of a Heegaard diagram for $\Sigma(L(a_1,a_2,a_3,a_4))$.

\begin{construction} \label{const: H(a1a2a3a4)}
For $a_i = p_i/q_i \in \bQ \cup \{ 1/0 \}$, $i=1,\dots,4$, let $H(a_1,a_2,a_3,a_4)$ denote the Heegaard diagram shown in Figure \ref{f: genus2}. Formally, we take two copies of the configuration from Construction \ref{const: plumbing}, denoted $(N, \eta_{-q_1/p_1}, \eta_{q_2/p_2})$ and $(N',\eta_{p_3/q_3}, \eta_{-p_4/q_4})$, and form the union $S = (N \cup N') / {\sim}$, where $R_1 \subset N$ is identified with $R_2 \subset N'$, and $R_2 \subset N$ is identified with $R_1 \subset N'$, each via a $90^\circ$ rotation of $I \times I$; we define
\begin{align*}
\alpha_1 &= \eta_{-q_1/p_1} \subset N & \beta_1 &= \eta_{q_2/p_2} \subset N \\
\alpha_2 &= \eta_{p_3/q_3} \subset N' & \beta_2 &= \eta_{-p_4/q_4} \subset N'.
\end{align*}
\end{construction}



\begin{figure}
\labellist
 \pinlabel {{\color{red} $\alpha_1$}} [tr] at 90 12
 \pinlabel {{\color{darkred} $\alpha_2$}} [bl] at 258 174
 \pinlabel {{\color{blue} $\beta_1$}} [br] at 90 174
 \pinlabel {{\color{darkblue} $\beta_2$}} [tl] at 258 12
 \pinlabel {{\color{red} $p_1$}} [tl] at 239 62
 \pinlabel {{\color{red} $q_1$}} [tl] at 203 40
 \pinlabel {{\color{blue} $p_2$}} [br] at 20 124
 \pinlabel {{\color{blue} $q_2$}} [br] at 60 150
 \pinlabel {{\color{darkred} $p_3$}} [bl] at 288 150
 \pinlabel {{\color{darkred} $q_3$}} [bl] at 328 124
 \pinlabel {{\color{darkblue} $p_4$}} [tr] at 145 40
 \pinlabel {{\color{darkblue} $q_4$}} [tr] at 109 62
 \endlabellist
\includegraphics[width=5in]{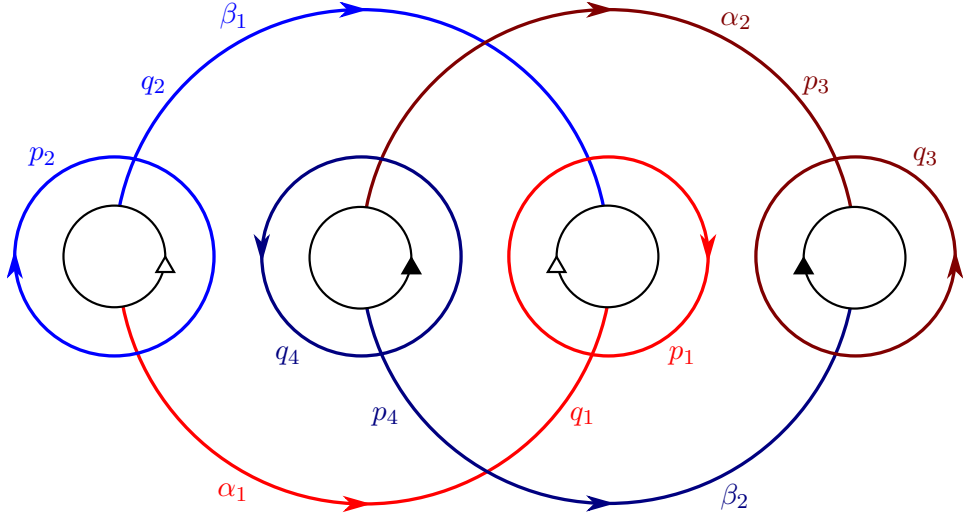}
\caption{Template for the Heegaard diagram $H(a_1,a_2,a_3,a_4)$, $a_i = p_i/q_i \in \bQ\cup\{1/0\}$, $i=1,\dots,4$.}  \label{f: genus2}
\end{figure}

Observe that
\[
M(H(a_1,a_2,a_3,a_4)) =
\begin{pmatrix}
-p_1q_2 - q_1p_2 & -q_1p_4 \\
-q_2p_3 & p_3q_4 + q_3p_4
\end{pmatrix},
\]
so
\[
\det M(a_1,a_2,a_3,a_4) = -p_1q_2 q_3 p_4 - q_1 p_2 q_3 p_4 - q_1 q_2 p_3 p_4,
\]
while
\[
\abs{\SS(H(a_1,a_2,a_3,a_4))} = \abs{p_1q_2 q_3 p_4} + \abs{q_1 p_2 q_3 p_4} + \abs{q_1 q_2 p_3 p_4}.
\]
It is easy to see that $H(a_1,a_2,a_3,a_4)$ is strong if and only if $a_1,a_2,a_3,a_4$ all have the same sign. Additionally, note that when $a_1 = 1/0$ and $a_3= 0/1$ or when $a_2 = 1/0$ and $a_4 = 0/1$, 
$H(a_1,a_2,a_3,a_4)$ is a connected sum of genus-$1$ strong diagrams.

\begin{proposition} \label{p: hd}
For $a_1,a_2,a_3,a_4 \in \bQ\cup\{1/0\}$, the Heegaard diagram $H(a_1,a_2,a_3,a_4)$ presents the manifold $\Sigma(L(a_1,a_2,a_3,a_4))$.
\end{proposition}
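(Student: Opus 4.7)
The plan is to construct $\Sigma(L(a_1,a_2,a_3,a_4))$ directly from the tangle decomposition of the link and to identify the resulting genus-$2$ Heegaard splitting with $H(a_1,a_2,a_3,a_4)$. Using the standard correspondence between $3$-ball tangles and solid tori, I would remove small balls $B_1, \dots, B_4$ around the four rational tangles and write
\[
\Sigma(L(a_1,a_2,a_3,a_4)) = W \cup V_1 \cup V_2 \cup V_3 \cup V_4,
\]
where $W$ is the branched double cover of the template complement and each $V_i = \Sigma(B_i, R(a_i))$ is a solid torus whose meridian has slope $p_i \tilde\mu_i + q_i \tilde\lambda_i$ on $\partial V_i$, per the conventions of Section \ref{ss:tangles}.

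Next, I would exhibit a genus-$2$ Heegaard splitting of this manifold. Decompose $W$ into two compression bodies $W_+$ and $W_-$ along a common genus-$2$ outer boundary $S$, so that the torus boundaries $T_1, T_3 \subset \partial W_+$ and $T_2, T_4 \subset \partial W_-$. Such a decomposition arises from an ``equatorial'' sphere of the template that separates $\{B_1, B_3\}$ from $\{B_2, B_4\}$ and cuts the connector arcs in the pattern dictated by the figure-eight structure. Attaching $V_1, V_3$ to $W_+$ and $V_2, V_4$ to $W_-$ yields two genus-$2$ handlebodies $H_\pm$ meeting along $S$, giving the desired Heegaard splitting $\Sigma(L(a_1,a_2,a_3,a_4)) = H_+ \cup_S H_-$.

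I would then identify this geometric splitting with Construction \ref{const: H(a1a2a3a4)}. On the side $W_+$, the portion of $S$ in $W_+$ admits the following description: a neighborhood of the ``central'' arcs between the two tangle balls inside $W_+$ branch-covers the annulus $A$ of Construction \ref{const: plumbing}, and the arcs connecting $W_+$ to $W_-$ contribute two $1$-handles $R_1, R_2$, matching Construction \ref{const: plumbing} with $N$ denoting the portion of $S$ in $W_+$. An analogous description with $N'$ holds for $W_-$, and the $90^\circ$ rotation identifying $R_1 \subset N$ with $R_2 \subset N'$ arises because each connector arc of the template identifies a $\mu$-arc of one tangle ball with a $\lambda$-arc of the adjacent tangle ball across the separating sphere. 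The boundary of the meridian disk of $V_i$, pulled onto $S$, becomes a curve winding $p_i$ times around the core of $A$ and $q_i$ times along its interval factor (by Lemma \ref{l: annulus}), then extended through the relevant $1$-handle; this is precisely $\eta_{p_i/q_i}$.

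The main obstacle is the systematic bookkeeping of slopes and orientations through the branched cover, which accounts for the minus signs and numerator-denominator swaps appearing in $\eta_{-q_1/p_1}, \eta_{q_2/p_2}, \eta_{p_3/q_3}, \eta_{-p_4/q_4}$. These conventions are fixed by the choices of orientations for $\tilde \mu$ and $\tilde \lambda$ in Section \ref{ss:tangles} together with the geometry of the figure-eight template and the branched cover involution. Verifying that they match Construction \ref{const: H(a1a2a3a4)} is the core technical calculation; once this is done, the proposition follows from the standard principle that a Heegaard splitting of a closed oriented $3$-manifold is determined up to equivalence by its Heegaard diagram.
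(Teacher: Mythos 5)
Your overall strategy is the same as the paper's: the paper also takes the sphere separating the tangles $a_1,a_3$ from $a_2,a_4$ (it meets $L$ in six points, so its branched double cover is the genus-$2$ surface $S$), and its two handlebodies are exactly $\Sigma(A_1\cup A_3, \cdot)=V_1\bconn V_3$ and $\Sigma(A_2\cup A_4,\cdot)=V_2\bconn V_4$, which is your compression-body picture with the solid tori reattached. The problem is that you stop exactly where the proof actually lives. The content of the proposition is not the existence of some genus-$2$ splitting built from the four solid tori --- that part is soft --- but the identification of the compressing-disk boundaries with the specific curves $\eta_{-q_1/p_1},\eta_{q_2/p_2},\eta_{p_3/q_3},\eta_{-p_4/q_4}$ of Construction \ref{const: H(a1a2a3a4)}, including the gluing of the two plumbing pieces. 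You label this ``the core technical calculation'' and assert it is ``fixed by the choices of orientations,'' but you never carry it out, and there is no shortcut around it: getting a sign or a $p\leftrightarrow q$ swap wrong produces a diagram for a different manifold. In the paper this step occupies most of the proof: the six arcs $C_1,\dots,C_6$ of the separating circle are lifted to oriented curves $\gamma_1,\dots,\gamma_6$ on $S$ with $\gamma_i\cdot\gamma_{i+1}=1$, the tangle framings are expressed as $\tilde\mu_1=\gamma_1$, $\tilde\lambda_1=\gamma_6$, $\tilde\mu_3=\gamma_3$, $\tilde\lambda_3=-\gamma_4$, etc., one checks $\tilde\mu_i\cdot\tilde\lambda_i=(-1)^i$ with respect to $\partial U_1$ and hence $=-1$ with respect to each $\partial V_i$ (the orientation of $S$ agrees with $\partial V_i$ only for $i=1,3$), and only then can one write $\alpha_1=p_1\gamma_1'+q_1\gamma_6$, $\beta_1=p_2\gamma_1+q_2\gamma_2$, $\alpha_2=p_3\gamma_3-q_3\gamma_4$, $\beta_2=-p_4\gamma_5+q_4\gamma_4'$ and recognize the result as $H(a_1,a_2,a_3,a_4)$. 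A proof of this proposition must exhibit that computation, not defer it.

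Two smaller points. First, Lemma \ref{l: annulus} is not the tool that identifies the meridian of $V_i$ on $S$: the relevant input is the fact recorded in Section \ref{ss:tangles} (following Gordon) that $p\tilde\mu+q\tilde\lambda$ bounds a compressing disk in $\Sigma(B^3,R(p/q))$; in the constructive direction one simply writes the curves down, and Lemma \ref{l: annulus} is needed only for the converse classification in Theorem \ref{t: genus2}, where an unknown coherent configuration must be put into standard position. Second, your heuristic that the $90^\circ$ rotation of the handles comes from ``$\mu$-arcs gluing to $\lambda$-arcs'' is roughly the right picture, but it is again an assertion about orientations and framings that has to be checked against the explicit lifts; in the paper it falls out of the identities $\mu_1=\mu_2=C_1$ and $\lambda_3=\lambda_4=C_4$ together with the plumbing pattern of the $\gamma_i$ in Figure \ref{f: genus2template}.
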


%




\begin{proof}
Figure \ref{f:branchset2} depicts the link $L = L(a_1,a_2,a_3,a_4)$ along with some additional decoration that we will use in order to produce a Heegaard decomposition of $\Sigma(L)$.  We will then show that $H(a_1,a_2,a_3,a_4)$ presents this decomposition. The red curve $C$ in the projection plane is the cross-section of a sphere $P$ that meets $L$ in six points $x_1,\dots,x_6$.    The regions cut out by $C$ in the projection plane are the cross-sections of the balls cut out by $P$ in $S^3$.  The two additional arcs in blue and yellow are cross-sections of equatorial disks $D_1$ and $D_2$ for these balls. The space $S^3 \minus (P \cup D_1 \cup D_2)$ consists of four open balls whose closures we denote by $A_1,\dots,A_4$; note that $(A_i,A_i \cap L)$ is the rational tangle $(B^3,R(a_i))$.  Orient $P$ as the boundary of $A_1 \cup A_3$. The double cover $V_i = \Sigma(A_i,A_i \cap L)$ is then a solid torus, and $\Sigma(L) = V_1 \cup \dots \cup V_4$.

Since $A_1 \cap A_3 = D_1$, $A_2 \cap A_4 = D_2$, and $D_1$ and $D_2$ each meet $L$ in a single point, it follows that $U_1 =\Sigma(A_1 \cup A_3, (A_1 \cup A_3) \cap L)$ and $U_2 = \Sigma(A_2 \cup A_4, (A_2 \cup A_4) \cap L)$ are each genus-$2$ handlebodies. Specifically, $U_1 = V_1 \bconn V_3$ and $U_2 = V_2 \bconn V_4$, where $\bconn$ denotes the boundary connected sum. Thus, $U_1 \cup_S U_2$ is a genus two Heegaard decomposition of $\Sigma(L)$, where $S = \del U_1 = -\del U_2 = \Sigma(P, P \cap L)$.

\begin{figure}[t]
\centering
\includegraphics[width=3in]{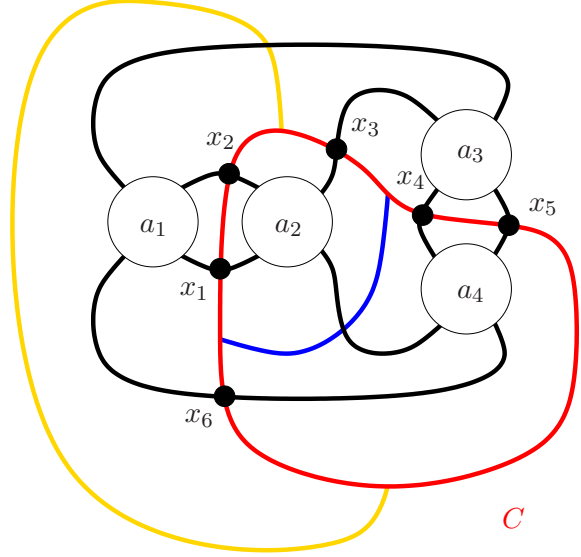}
\put(-167,122){$a_1$}
\put(-116,122){$a_2$}
\put(-47,150){$a_3$}
\put(-47,97){$a_4$}
\put(-152,98){$x_1$}
\put(-142,155){$x_2$}
\put(-87,160){$x_3$}
\put(-70,140){$x_4$}
\put(-20,130){$x_5$}
\put(-150,50){$x_6$}
\put(-30,10){\textcolor{red}{$C$}}
\caption{Decomposing $(S^3, L(a_1,a_2,a_3,a_4))$.}  \label{f:branchset2}
\end{figure}

For $i=1,\dots,6$, let $C_i$ denote the arc of $C$ that runs clockwise from  $x_i$ to $x_{i+1}$ in Figure \ref{f:branchset2} (subscripts mod $6$).  Let $N_i \subset P$ denote a small regular neighborhood of $C_i$ and $N = N_1 \cup \cdots \cup N_6$. 
Let $\tilde x_i$, $\gamma_i$, and $\gamma$ denote the respective preimages of $x_i$, $C_i$, and $C$ in $S \subset \Sigma(L)$, $i=1,\dots,6$.
Then $\tilde N_i = \Sigma(N_i,\{x_i,x_{i+1}\})$ is an annulus with core $\gamma_i$, and the union $\tilde N = \Sigma(N, \{x_1,\dots,x_6\})$ is a cyclic plumbing of these annuli. We may orient the curves $\gamma_i$ such that $\gamma_i \cdot \gamma_{i+1} = 1$ for $i=1,\dots,6$. Now $S = \Sigma(P,\{x_1,\dots,x_6\})$ results from gluing $A$ to the double cover of $P \minus N$, which consists of four disks; it follows that $\gamma \subset S$ appears as shown in Figure \ref{f: genus2template}.

\begin{figure}[t]
\labellist
 \pinlabel $\bullet$ at 46 128
 \pinlabel $\bullet$ at 46 57
 \pinlabel $\bullet$ at 175 171
 \pinlabel $\bullet$ at 175 14
 \pinlabel $\bullet$ at 302 128
 \pinlabel $\bullet$ at 302 57
 \pinlabel {$\tilde x_1$} at 43 49
 \pinlabel {$\tilde x_2$} at 42 137
 \pinlabel {$\tilde x_3$} at 175 181
 \pinlabel {$\tilde x_4$} at 306 137
 \pinlabel {$\tilde x_5$} at 306 49
 \pinlabel {$\tilde x_6$} at 175 5
 \pinlabel {{\color{red} $\gamma_1$}} [r] at 3 93
 \pinlabel {{\color{red} $\gamma_2$}} [br] at 90 174
 \pinlabel {{\color{red} $\gamma_3$}} [bl] at 258 174
 \pinlabel {{\color{red} $\gamma_4$}} [l] at 345 93
 \pinlabel {{\color{red} $\gamma_5$}} [tl] at 258 12
 \pinlabel {{\color{red} $\gamma_6$}} [tr] at 90 12
 \pinlabel {{\color{green} $\gamma_1'$}} [tl] at 239 62
 \pinlabel {{\color{green} $\gamma_4'$}} [tr] at 109 62
\endlabellist
\includegraphics[width=5in]{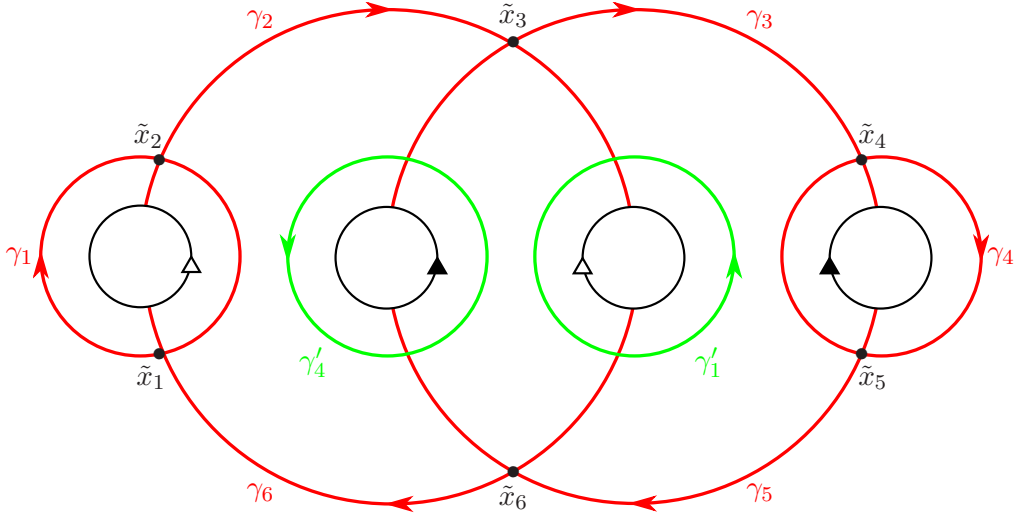}
\caption{The surface $S = \Sigma(P, \{x_1,\dots,x_6\})$ along with the branch points $\tilde x_1,\dots,\tilde x_6$, the 1-complex $\gamma$ in red, and the curves $\gamma_1'$, $\gamma_4'$ in green.  The curve $\gamma_i$ consists of the pair of arcs of $\gamma$ with endpoints $\tilde x_i$ and $\tilde x_{i+1}$.  The curves $\gamma_1'$ and $\gamma_4'$ are parallel copies of $\gamma_1$ and $\gamma_4$, respectively.}  \label{f: genus2template}
\end{figure}


Define
\begin{align*}
\mu_1 &= C_1, &  \mu_2 &= C_1, & \mu_3 &= C_3, & \mu_4 &= C_5, \\
\lambda_1 &= C_6, & \lambda_2 &= C_2, & \lambda_3 &= C_4, & \lambda_4 &= C_4,
\end{align*}
and observe that $\mu_i$ and $\lambda_i$ are framing arcs for the rational tangle $(A_i, L \cap A_i)$, as described above. Let $\tilde \mu_i, \tilde \lambda_i \subset S \cap \partial V_i$ denote the preimages of these curves ($i=1, \dots, 4$), oriented as follows:
\begin{align*}
\tilde\mu_1 &= \gamma_1, &  \tilde\mu_2 &= \gamma_1, & \tilde\mu_3 &= \gamma_3, & \tilde\mu_4 &= -\gamma_5, \\
\tilde\lambda_1 &= \gamma_6, & \tilde\lambda_2 &= \gamma_2, & \tilde\lambda_3 &= -\gamma_4, & \tilde\lambda_4 &= \gamma_4.
\end{align*}
With respect to the orientation of $S$ as $\partial U_1$,
$\tilde\mu_i \cdot \tilde\lambda_i = (-1)^i$, $i=1,\dots,4$.
Since the orientation of $S$ agrees with the boundary orientation of $\partial V_i$ exactly when $i=1$ or $3$, we see that $\tilde \mu_i \cdot \tilde \lambda_i = -1$ with respect to the boundary orientation of $V_i$ for all $i=1,\dots,4$. Thus, it follows that a curve of type $p_i \tilde \mu_i + q_i \tilde \lambda_i$ bounds a compressing disk for $V_i$.

Let $\gamma_1'$ and $\gamma_4'$ be parallel copies of $\gamma_1$ and $\gamma_4$, as shown in Figure \ref{f: genus2template}, and define
\begin{align*}
\alpha_1 &= p_1 \gamma_1' + q_1 \gamma_6, & \beta_1 &= p_2 \gamma_1 + q_2 \gamma_2, \\
\alpha_2 &= p_3 \gamma_3 - q_3 \gamma_4, & \beta_2 &= -p_4 \gamma_5 + q_4 \gamma_4'.
\end{align*}
Then $\alpha_1$ and $\alpha_2$ are disjoint compressing disks for $U_1$, and $\beta_1$ and $\beta_2$ are disjoint compressing disks for $U_2$, so $(S, \alpha_1 \cup \alpha_2, \beta_1 \cup \beta_2)$ presents $\Sigma(L)$. By construction,
\[
(S, \alpha_1\cup \alpha_2, \beta_1\cup\beta_2) = H(a_1,a_2,a_3,a_4),
\]
as required.
\end{proof}


%

\subsection{Strong, 1-extendible Heegaard diagrams of genus 2}

We will now show that any strong L-space admitting a genus-2 strong Heegaard diagram can be represented by a Heegaard diagram of the form in Construction \ref{const: H(a1a2a3a4)}. The main technical result of this section is:

\begin{proposition} \label{p: delta curves}
Suppose $H$ is a 1-extendible, irreducible, strong Heegaard diagram of genus 2 such that $\abs{\alpha_i \cap \beta_j} \ge 2$ for each $i,j$. Then there exist simple closed curves $\delta_1$ and $\delta_2$ on $S$ such that:
\begin{itemize}
\item Each $\delta_i$ divides $S$ into a union $S = T_i \cup U_i$ of two genus-$1$ surfaces;
\item $\delta_1$ separates $\alpha_1$ from $\alpha_2$, and $\delta_2$ separates $\beta_1$ from $\beta_2$;
\item $\abs{\delta_1 \cap \delta_2} =4$;
\item $T_1 \cap T_2$ and $U_1 \cap U_2$ are quadrilaterals whose boundaries each consist of two arcs of $\delta_1$ and two arcs of $\delta_2$, and the intersection of either of these regions with any $\alpha$ or $\beta$ circle consists of arcs that connect opposite sides of the quadrilateral;
\item $T_1 \cap U_2$ and $U_1 \cap T_2$ are annuli whose boundary components each consist of an arc of $\delta_1$ and an arc of $\delta_2$, and the intersection of either of these regions with any $\alpha$ or $\beta$ circle consists of arcs that connect opposite boundary components.
\end{itemize}
\end{proposition}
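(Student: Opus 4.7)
The plan is to build $\delta_1$ by analyzing the complement of $\alpha := \alpha_1 \cup \alpha_2$, then build $\delta_2$ symmetrically, and finally verify the joint structure. Since $H$ presents a rational homology sphere, the classes $[\alpha_1], [\alpha_2]$ are linearly independent in $H_1(S;\Z)$, so $\alpha$ is non-separating and $\Sigma_\alpha := S \setminus \nu(\alpha_1 \cup \alpha_2)$ is a four-holed sphere with boundary circles $\alpha_1^\pm$ and $\alpha_2^\pm$. By Lemma \ref{l: all coherent}, $H$ is coherent, so the arcs of $\beta := \beta_1 \cup \beta_2$ in $\Sigma_\alpha$ enter each $\alpha_i$-boundary with a single consistent sign pattern, which tightly constrains which pairs of boundary circles the two endpoints of a single arc can lie on.

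The main technical step is to show that, after an isotopy rel $\alpha$, the disjoint arcs $\beta \cap \Sigma_\alpha$ organize into three ``bundles'' of parallel arcs: a bundle $\mathcal{A}_1$ joining $\alpha_1^+$ to $\alpha_1^-$, a bundle $\mathcal{A}_2$ joining $\alpha_2^+$ to $\alpha_2^-$, and a bundle $\mathcal{C}$ joining a fixed side of $\alpha_1$ to a fixed side of $\alpha_2$. Coherence limits the endpoint types of each arc, while 1-extendibility combined with the assumption $\abs{\alpha_i \cap \beta_j} \ge 2$ rules out tangled or linking arcs: the presence of such an arc should isolate some intersection point $x \in \alpha_i \cap \beta_j$ from every perfect matching of $G(H)$, contradicting 1-extendibility. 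I expect this combinatorial case analysis --- effectively, classifying admissible systems of disjoint coherent arcs in a four-holed sphere --- to be the hardest part of the proof.

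Granted the bundle decomposition, I would define $\delta_1 \subset \Sigma_\alpha$ as the (isotopy-unique) simple closed curve in $\Sigma_\alpha$ that separates $\{\alpha_1^\pm\}$ from $\{\alpha_2^\pm\}$, isotoped to cross each arc of $\mathcal{C}$ transversally exactly once and to be disjoint from $\mathcal{A}_1 \cup \mathcal{A}_2$. Glued back into $S$, this yields a separating simple closed curve cutting $S$ into two one-holed tori $T_1 \ni \alpha_1$ and $U_1 \ni \alpha_2$. The symmetric argument, working inside $\Sigma_\beta := S \setminus \nu(\beta_1 \cup \beta_2)$ and using the analogous bundle decomposition of $\alpha \cap \Sigma_\beta$, produces $\delta_2$ with $\beta_1 \subset T_2$ and $\beta_2 \subset U_2$.

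For the final step I would verify that $\abs{\delta_1 \cap \delta_2} = 4$ and that the complementary structure is as claimed. Distinct separating simple closed curves on a genus-$2$ surface have geometric intersection number at least $4$, and the ``crossed tubes'' picture produced by the two bundle decompositions realizes this bound: the two tube-intersection regions become the quadrilaterals $T_1 \cap T_2$ and $U_1 \cap U_2$, while the two tubes themselves split into the annuli $T_1 \cap U_2$ and $U_1 \cap T_2$. Inside each region, Lemma \ref{l: annulus} applied to the restriction of $\alpha \cup \beta$ recognizes the arcs as running between opposite sides of the quadrilateral, respectively opposite boundary components of the annulus. The main obstacle remains the bundle classification in the second paragraph; once that is in place, the rest is topological bookkeeping.
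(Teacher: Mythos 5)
Your central structural claim---that after isotopy $\beta \cap \Sigma_\alpha$ consists of three bundles, with a \emph{single} mixed bundle $\mathcal{C}$ joining a fixed side of $\alpha_1$ to a fixed side of $\alpha_2$---is false under the hypotheses, and the tool you propose for establishing it cannot do the job. Orient the curves; coherence gives each pair a sign $\epsilon_{ij}$, and strongness forces $\epsilon_{11}\epsilon_{22} = -\epsilon_{12}\epsilon_{21}$, so after reorienting we may take $\epsilon_{11}=\epsilon_{12}=\epsilon_{21}=+$ and $\epsilon_{22}=-$. Since $\beta_1$ meets both $\alpha_1$ and $\alpha_2$, it has transition arcs in both directions, and coherence pins down their endpoints: one type runs from $\alpha_1^+$ to $\alpha_2^-$ and the other from $\alpha_2^+$ to $\alpha_1^-$ (complementary sides of both curves). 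Because $\epsilon_{22}=-\epsilon_{21}$ while $\epsilon_{12}=\epsilon_{11}$, the transition arcs of $\beta_2$ instead join $\alpha_1^+$ to $\alpha_2^+$ and $\alpha_1^-$ to $\alpha_2^-$. Hence all four mixed types of arcs necessarily occur (this is visible in the model diagrams $H(a_1,a_2,a_3,a_4)$), and there is no single bundle $\mathcal{C}$; the existence of a curve $\delta_1$ in $\Sigma_\alpha$ separating the $\alpha_1$-copies from the $\alpha_2$-copies and meeting each mixed arc once is exactly what must be proved against this more complicated pattern. Moreover, $1$-extendibility is vacuous here: $G(H)$ has two vertices on each side with all four edge multiplicities at least $2$, so every edge automatically extends to a perfect matching, and $G(H)$ records only intersection numbers, not how arcs sit on $S$---so no ``tangled or linking'' configuration can be excluded by matchings. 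The geometric input you actually need is strongness itself: all generators of $\SS(H)$ have the same sign, equivalently there is no embedded rectangle with one side on each of $\alpha_1,\beta_1,\alpha_2,\beta_2$; your sketch never uses this beyond quoting coherence.

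Even granting a corrected description of $\beta\cap\Sigma_\alpha$ and, separately, of $\alpha\cap\Sigma_\beta$, constructing $\delta_1$ and $\delta_2$ by two independent cut-and-isotope arguments does not yield the joint conclusions: a curve separating $\{\alpha_1^\pm\}$ from $\{\alpha_2^\pm\}$ in a four-holed sphere is far from unique up to isotopy (your parenthetical ``isotopy-unique'' is incorrect), the two normalizing isotopies need not be simultaneously realizable, and two separating curves chosen independently can intersect many more than four times; the quadrilateral/annulus decomposition in the last two bullets is precisely the content at stake, not bookkeeping. The paper sidesteps all of this with a single analysis: it cuts along $\alpha_1\cup\beta_1$ (chosen with $\abs{\alpha_1\cap\beta_1}$ minimal), uses an Euler-measure count together with coherence to show the complement is a union of rectangles plus either two octagons, a $12$-gon, an annulus, or a once-punctured torus, rules out the octagon and punctured-torus cases via the no-rectangle condition and the orientation pattern above, and in the two surviving cases builds $T_1$ and $T_2$ (hence $\delta_1,\delta_2$) simultaneously inside that one region by attaching strips or handles along the parallel families of rectangles; the count $\abs{\delta_1\cap\delta_2}=4$ and the stated intersection pattern then come out of the construction rather than needing a separate comparison. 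So the proposal has a genuine gap at its central step, and its concluding step is not routine from your setup either.
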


Before proving Proposition \ref{p: delta curves}, we show how it implies Theorem \ref{t: genus2}.

\begin{proof}[Proof of Theorem \ref{t: genus2}]
Let $Y$ be a strong L-space with a genus-2 strong Heegaard diagram $H$.  By Proposition \ref{p: 1-extendible}, we may assume that $H$ is 1-extendible.  If $H$ is reducible, then $Y$ is a connected sum of lens spaces. Furthermore, if any $\alpha$ circle meets a $\beta$ circle in a single point, then we may destabilize to obtain a genus-$1$ diagram, so $Y$ is a lens space. Thus, we may assume that $\abs{\alpha_i \cap \beta_j} \ge 2$ for each $i,j$.

Form the decompositions $S = T_1 \cup U_1 = T_2 \cup U_2$ as in Proposition \ref{p: delta curves}, and assume that the curves are labeled such that $\alpha_1 \subset T_1$, $\alpha_2 \subset U_1$, $\beta_1 \subset U_2$, and $\beta_2 \subset T_2$. The intersection of $\alpha_1 \cup \beta_1$ with the annulus $A_1 = T_1 \cap U_2$ satisfies the hypotheses of Lemma \ref{l: annulus}, so this intersection can be identified with a standard configuration of the form $(A,\bm\gamma(p,q,r,s))$, as described in Construction \ref{const: annulus}, for some integers $p,q,r,s$. By attaching the quadrilaterals $T_1 \cap T_2$ and $U_1 \cap U_2$, we see that $T_1 \cup U_2$ is a linear plumbing of three annuli, denoted $N$, and we can then identify $(T_1 \cup U_2, \alpha_1, \beta_1)$ with the configuration $(N, \eta_{p/q}, \eta_{r/s})$ from Construction \ref{const: annulus}. In a similar manner, $(T_2 \cup U_1, \alpha_2, \beta_2)$ can be identified with $(N', \eta_{p'/q'}, \eta_{r'/s'})$, where $N'$ is homeomorphic to $N$. This identifies $H$ with the description of $H(-\frac{q}{p}, \frac{s}{r}, \frac{p'}{q'}, -\frac{r'}{s'})$ as given in Construction \ref{const: H(a1a2a3a4)}, so $Y \cong \Sigma(L(-\frac{q}{p}, \frac{s}{r}, \frac{p'}{q'}, -\frac{r'}{s'}))$. Moreover, since $H$ is a strong diagram, we deduce that $-\frac{q}{p}$, $\frac{s}{r}$, $\frac{p'}{q'}$, and $-\frac{r'}{s'}$ all have the same sign, and therefore $L(-\frac{q}{p}, \frac{s}{r}, \frac{p'}{q'}, -\frac{r'}{s'})$ is an alternating link.
\end{proof}

\begin{proof}[Proof of Proposition \ref{p: delta curves}]
To begin, assume that the curves are labeled such that $\abs{\alpha_1 \cap \beta_1}$ is minimal among the four pairs of curves. Choose orientations so that $\alpha_1$ intersects both $\beta_1$ and $\beta_2$ positively, and $\alpha_2$ intersects $\beta_1$ positively and $\beta_2$ negatively. Let $v = \abs{\alpha_1 \cap \beta_1}$.

Let $R_1, \dots, R_n$ denote the regions of $S - \alpha_1 - \beta_1$. Let $e$ denote the Euler measure (see, e.g., \cite[Lemma 6.2]{OSz4Manifold}).  Then $-2 = e(S) = \sum e(R_i)$.  Since all intersection points in $\alpha_1 \cap \beta_1$ have the same sign, as we traverse a boundary component of any $R_i$, the arcs of $\alpha_1$ (resp.~$\beta_1$) must alternate in orientation. In particular, the number of arcs in each boundary component of each $R_i$ is a multiple of 4. It follows that $e(R_i) \leq 0$ for all $i$.  Thus, each $R_i$ may be a disk with $4$, $8$, or $12$ sides ($e = 0,-1,-2$, respectively); an annulus with two $4$-sided boundary components ($e = -2$); or a genus-$1$ surface with one $4$-sided boundary component ($e=-2$).  Consequently, $\{ R_1, \dots, R_n \}$ consists of a number of $4$-sided disks (which we will refer to as rectangles), along with either \begin{inparaenum} \item \label{case:two8gons} two $8$-sided disks; \item \label{case:12gon} a 12-sided disk; \item \label{case:annulus} an annulus; or \item \label{case:torus} a once-punctured torus. \end{inparaenum}

Let $N$ denote a regular neighborhood of $\alpha_1 \cup \beta_1$; note that $\chi(N) = -v$. Note that $-2 = \chi(S) = \chi(N) + \sum_{i=1}^n \chi(R_i)$, so $\sum_{i=1}^n \chi(R_i) = v-2$. Thus, the number of rectangles equals $v-4$ in case \ref{case:two8gons}, $v-3$ in case \ref{case:12gon}, $v-2$ in case \ref{case:annulus}, and $v-1$ in case \ref{case:torus}. We may assume that the non-rectangular region(s) are labeled $R_1$ (and $R_2$, in case \ref{case:two8gons}).

If $R_i$ is a rectangle, then any segment of $\alpha_2$ or $\beta_2$ that meets $R_i$ must enter and exit $R_i$ on opposite sides. Furthermore, $R_i$ cannot meet both $\alpha_2$ and $\beta_2$, since otherwise there would be a rectangle whose four sides are arcs of $\alpha_1$, $\beta_1$, $\alpha_2$, and $\beta_2$, which is prohibited in a strong diagram. Thus, all of the points in $\alpha_2 \cap \beta_2$ (which is a non-empty intersection) must lie in the non-rectangular region(s) of $S - \alpha_1 - \beta_2$. We consider the four cases enumerated above.

\begin{description}[leftmargin=0in, listparindent=\parindent, itemsep=3pt]
\item[Case \ref{case:two8gons} (two octagons)]
Without loss of generality, suppose that $\alpha_2$ and $\beta_2$ intersect inside the octagonal region $R_1$. Label the edges of $R_1$ consecutively by $e_0, \dots, e_7$, so that $e_0, e_2, e_4, e_6$ are arcs of $\alpha_1$ and $e_1, e_3, e_5, e_7$ are arcs of $\beta_1$. Let $a$ and $b$ be the components of $\alpha_2 \cap R_i$ and $\beta_2 \cap R_1$, respectively, containing some point of $\alpha_2 \cap \beta_2$, and assume that $b$ intersects $e_0$. Since all points of $\alpha_2 \cap \beta_0$ have the same sign, the other endpoint of $b$ must be on either $e_2$ or $e_6$. However, then $a$ must intersect either $e_1$ or $e_7$, respectively, and we obtain a rectangle whose four sides are arcs of $\alpha_1$, $\beta_1$, $\alpha_2$, and $\beta_2$, which is prohibited. Thus, Case \ref{case:two8gons} cannot occur.

\item[Case \ref{case:12gon} (one dodecagon)]

\begin{figure}
\labellist
 \pinlabel {{\color{red} $\alpha_1$}} at 181 249
 \pinlabel {{\color{blue} $\beta_1$}} at 151 258
 \pinlabel {{\color{darkred} $\alpha_2$}} at 196 138
 \pinlabel {{\color{darkblue} $\beta_2$}} at 116 197
 \small
 \pinlabel {{\color{red} $e_0$}} at 144 241
 \pinlabel {{\color{red} $e_2$}} at 229 187
 \pinlabel {{\color{red} $e_4$}} at 223 66
 \pinlabel {{\color{red} $e_6$}} at 114 19
 \pinlabel {{\color{red} $e_8$}} at 31 74
 \pinlabel {{\color{red} $e_{10}$}} at 35 194
 \pinlabel {{\color{blue} $e_1$}} at 194 221
 \pinlabel {{\color{blue} $e_3$}} at 242 141
 \pinlabel {{\color{blue} $e_5$}} at 187 31
 \pinlabel {{\color{blue} $e_7$}} at 64 39
 \pinlabel {{\color{blue} $e_9$}} at 19 118
 \pinlabel {{\color{blue} $e_{11}$}} at 73 230
\endlabellist
\includegraphics{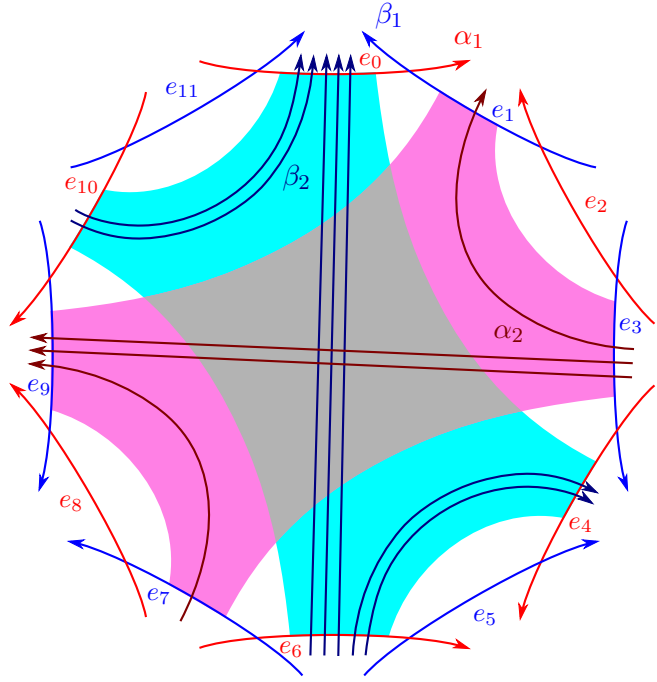}
\caption{A 12-gon region of $S \minus (\alpha_1 \cup \beta_1)$. The octagons $D_1,D_2$ found in Case \ref{case:12gon} of the proof of Proposition \ref{p: delta curves} are shown in pink and light blue, respectively, overlapping in the gray region.} \label{f: 12gon}
\end{figure}

Suppose that $\alpha_2$ and $\beta_2$ intersect inside the dodecagon $R_1$. Label the edges of $R_1$ clockwise by $e_0, \dots, e_{11}$, so that odd indices correspond to arcs of $\alpha_1$ and even indices correspond to arcs of $\alpha_2$. We consider these indices as elements of $\Z/12$.

The edges $e_i$ inherit orientations from the orientations on $\alpha_1$ and $\beta_1$. Since $\alpha_1$ and $\beta_1$ intersect positively, we may assume that $e_0, e_3, e_4, e_7, e_8, e_{11}$ are oriented clockwise (i.e. opposite to the boundary orientation on $\partial R_1$), and the remaining edges are oriented counterclockwise. By our orientation conventions, segments of $\alpha_2$ may enter $R_1$ through $e_3$, $e_7$, and $e_{11}$ and exit through $e_1$, $e_5$, and $e_9$; segments of $\beta_2$ may enter through $e_2$, $e_6$, and $e_{10}$ and exit through $e_0$, $e_4$, and $e_8$.

\begin{lemma} \label{l: 12gon identifications}
For $i \in \Z/12$, if $\gamma_i$ is an arc that begins in $R_1$, exits through $e_i$, and proceeds, passing through rectangular regions of $S - \alpha_1 - \beta_1$ without turning left or right, until it reenters $R_1$ through some $e_j$, then $j=i+6$. (That is, $\gamma_i$ reenters $R_1$ through the edge directly opposite the one through which it exited.)
\end{lemma}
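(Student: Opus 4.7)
My plan is to prove the lemma by organizing the rectangles in $W = S \setminus \mathrm{int}(R_1)$ into strips. Suppose $e_i$ is an $\alpha_1$-arc; the case of a $\beta_1$-arc is entirely symmetric. I will define a \emph{$\beta$-strip} to be a maximal chain of rectangles glued along shared $\alpha_1$-arcs. The trajectory $\gamma_i$ is contained in the unique $\beta$-strip $\mathcal{S}_i$ having $e_i$ as one endmost side. Since $R_1$ is the only non-rectangular region of $S-\alpha_1-\beta_1$, $\mathcal{S}_i$ cannot close up cyclically, so its other endmost $\alpha_1$-arc must also lie on $\partial R_1$; call it $e_j$. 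This immediately gives $j \equiv i \pmod 2$, so $j\in\{0,2,4,6,8,10\}$.

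Next I will use the coherence of $\alpha_1\cap\beta_1$ to constrain $j$ further. Because all intersection signs agree, the strip $\mathcal{S}_i$ is topologically a disk whose boundary consists of $e_i$, $e_j$, and two ``rails'' that are sub-arcs of $\beta_1$ traversing $\beta_1$ in compatible directions. One can then identify $\mathcal{S}_i$ with a regular neighborhood of a central sub-arc $\sigma\subset\beta_1$, where $e_i$ and $e_j$ are the two $\alpha_1$-arcs cut off at the endpoints of $\sigma$. In particular, $R_1$ lies on opposite local sides of $\alpha_1$ at the two ends of $\mathcal{S}_i$. Under the orientation pattern stated in the proof -- clockwise $\alpha_1$-edges $\{e_0,e_4,e_8\}$ and counterclockwise $\alpha_1$-edges $\{e_2,e_6,e_{10}\}$ -- this ``opposite-side'' condition corresponds to $e_i$ and $e_j$ having opposite orientations in $\partial R_1$. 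So $\{e_i,e_j\}$ must pair one element of $\{e_0,e_4,e_8\}$ with one of $\{e_2,e_6,e_{10}\}$.

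The last step, which I expect to be the crux, is to rule out the non-antipodal pairings of these two sets of three $\alpha_1$-edges. I plan to leverage the interplay of $\beta$-strips with the analogous $\alpha$-strips (maximal chains of rectangles glued along $\beta_1$-arcs): each of the $v-3$ rectangles lies in exactly one $\beta$-strip and exactly one $\alpha$-strip, so the three $\beta$-strips and three $\alpha$-strips together give two compatible partitions of the rectangles with a grid-like structure. By the same argument applied to $\beta_1$-edges, the three $\alpha$-strips pair $\{e_1,e_5,e_9\}$ bijectively with $\{e_3,e_7,e_{11}\}$. Tracking how the three $\beta$-pairs of $\alpha_1$-edges are interleaved on $\partial R_1$ with the three $\alpha$-pairs of $\beta_1$-edges, and using the fact that strips of the same type are disjoint subregions of $W$ so cannot ``straddle'' one another, I expect to conclude that the only pairing simultaneously consistent on both sides is the antipodal one $e_i\leftrightarrow e_{i+6}$. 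The bulk of the work will lie in this final combinatorial verification, which should reduce to a finite case analysis of the six possible bijections $\{e_0,e_4,e_8\}\to\{e_2,e_6,e_{10}\}$ together with their compatibility with the analogous six bijections on $\beta_1$-edges.
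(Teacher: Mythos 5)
Your first two paragraphs are fine, and they reproduce the first step of the paper's argument: coherence forces all crossings of $\gamma_i$ with $\alpha_1$ to have the same sign, so $R_1$ sits on opposite sides of $\alpha_1$ at the exit and reentry edges, which (with the stated orientation conventions) cuts the possibilities down to $j \in \{i+2, i+6, i+10\}$. The gap is entirely in your final step, which is both not carried out and, as described, based on a false principle. The claim that ``strips of the same type are disjoint subregions of $W$ so cannot straddle one another'' is the non-crossing-chords property of disjoint bands in a \emph{disk}; it fails in a surface of positive genus, and here $W = S \setminus \mathrm{int}(R_1)$ has genus two, so disjoint strips can (and do) induce crossing chords on $\partial R_1$. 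Worse, the conclusion you are trying to prove is precisely the maximally interleaved pairing: the three chords $e_0\mbox{--}e_6$, $e_2\mbox{--}e_8$, $e_4\mbox{--}e_{10}$ pairwise cross, so any non-straddling principle, if valid, would contradict the lemma rather than establish it. So the proposed case analysis of the six bijections has no mechanism behind it that points in the right direction.

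What is actually needed to exclude $j = i \pm 2$ is the connectedness of $\beta_1$ (resp.\ $\alpha_1$), which your outline never invokes. The paper's proof isotopes $\gamma_i$ to lie on a parallel push-off $\beta_1'$ (or $\beta_1''$) of $\beta_1$; the intersection $\beta_1' \cap R_1$ consists of three arcs, and if $\gamma_i$ reentered at $e_{i+2}$ (resp.\ $e_{i-2}$), then $\gamma_i$ together with a single one of those arcs would close up into an embedded circle running parallel to $\beta_1$ on one side. Such a circle must be all of the connected curve $\beta_1'$, yet it omits the other two arcs of $\beta_1' \cap R_1$ --- a contradiction, leaving only $j = i+6$. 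Your strip decomposition is compatible with this (the strip from $e_i$ is a neighborhood of a subarc of $\beta_1$, and the point is that a strip returning at $e_{i\pm2}$ would force a closed loop using only part of $\beta_1$), but without feeding in the fact that $\beta_1$ is a single embedded circle, the two-partition bookkeeping you propose cannot distinguish the antipodal pairing from the others, and the ``finite combinatorial verification'' you defer to cannot be completed with the stated tools.
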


\begin{proof}
For notational simplicity, we consider the case where $i=0$; the others proceed similarly.

Because the arc $\gamma_i$ runs parallel to $\beta_1$ (up to isotopy), it intersects $\alpha_1$ positively when it reenters $R_1$ through $e_j$. This implies that $j=2$, $6$, or $10$.

Let $\beta_1'$ and $\beta_1''$ be parallel copies of $\beta_1$ that run just to the left and right of $\beta_1$, respectively. Note that $\beta_1' \cap R_1$ consists of arcs $e_1'$, $e_5'$, and $e_9'$ that run parallel to $e_1$, $e_5$, and $e_9$, respectively, where $e_k'$ runs from a point in the interior of $e_{k+1}$ to a point in the interior of $e_{k-1}$. Up to isotopy, we may take $\gamma_0$ to be a segment of $\beta_1'$ that begins on $e_1'$ and ends on $e_1'$, $e_5'$, or $e_9'$, depending on whether $j=2$, $6$, or $10$, respectively. If $j=2$, then $\gamma_0 \cup e_1'$ is an embedded circle that runs parallel to $\beta_1$ on the left, meaning that it is actually equal to all of $\beta_1'$, a contradiction. A similar argument using $\beta_1''$ excludes the $j=10$ case. Thus, we conclude that $j=6$.
\end{proof}


For $i \in \Z/12$, let $p_i = \abs{ \gamma_i \cap (\alpha_1 \cup \beta_1)}$, so that $\gamma_i$ passes through $p_i-1$ rectangular regions. Clearly, $p_i = p_{i+6}$. Since every rectangle meets a unique $\gamma_i$ with $i \in \{0,2,4\}$ and a unique $\gamma_j$ with $j \in \{1,3,5\}$, we have
\begin{equation}
p_0 + p_2 + p_4 = p_1 + p_3 + p_5 = v.
\end{equation}

Let $a_0$ and $b_0$ be the components of $\alpha_2 \cap R_i$ and $\beta_2 \cap R_1$, respectively, containing some point of $\alpha_2 \cap \beta_2$. Up to relabeling, we may assume that $b_0$ exits $R_1$ through $e_0$.

\begin{lemma} \label{l: 12gon crossings}
If $a$ and $b$ are any components of $\alpha_2 \cap R_i$ and $\beta_2 \cap R_1$, respectively, that intersect in $R_1$, then $a$ enters $R_1$ through $e_3$ and exits through $e_9$, and $b$ enters through $e_6$ and exits through $e_0$.
\end{lemma}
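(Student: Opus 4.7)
The key tool will be the \emph{bad rectangle obstruction}: if $S$ contains an embedded quadrilateral disk $Q$ whose boundary consists, in cyclic order, of four arcs drawn one each from $\alpha_1$, $\beta_1$, $\alpha_2$, $\beta_2$, then the four corners of $Q$ furnish two generators of $\CF(H)$---one using the $\alpha_1 \cap \beta_1$ and $\alpha_2 \cap \beta_2$ corners (identity permutation, sign $+1$), and the other using the $\alpha_1 \cap \beta_2$ and $\alpha_2 \cap \beta_1$ corners (transposition, sign $-1$). A local model computation shows the product of the intersection signs at the four corners of such an embedded disk equals $+1$, and this product is invariant under reorientations of any single curve since each curve contributes to exactly two corners. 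Combining these observations, the two generators must have opposite signs, contradicting Lemma~\ref{l: samesign}; consequently, no such $Q$ can exist in the strong diagram $H$. The plan is to rule out every candidate configuration of $(a,b)$ other than $(e_3 \to e_9,\, e_6 \to e_0)$ by exhibiting such an obstructing quadrilateral.

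First I would show that $b_0$ enters $R_1$ through $e_6$. If instead $b_0 = e_2 \to e_0$ (the case $b_0 = e_{10} \to e_0$ is symmetric), the arc $b_0$ together with an arc of $\partial R_1$ bounds a disk containing the whole edge $e_1$, so any $\alpha_2$-arc $a_0$ crossing $b_0$ must exit through $e_1$. In each of the three subcases $a_0 \in \{e_3 \to e_1,\; e_7 \to e_1,\; e_{11} \to e_1\}$, one of the components of $R_1 \setminus (a_0 \cup b_0)$ is a quadrilateral whose four sides, in cyclic order, arise from $a_0$, an arc of $e_1$, an arc of $e_0$ or $e_2$, and $b_0$---a bad rectangle. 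This rules out the subcases, giving $b_0 = e_6 \to e_0$.

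Next, with $b_0 = e_6 \to e_0$ fixed, the arc $b_0$ divides $R_1$ into two hexagonal halves, so the possibilities for $a_0$ crossing $b_0$ are $a_0 \in \{e_3 \to e_9,\; e_7 \to e_1,\; e_7 \to e_5,\; e_{11} \to e_1,\; e_{11} \to e_5\}$. In each of the last four, an endpoint of $a_0$ is separated from an endpoint of $b_0$ by exactly one vertex of $R_1$, and the component of $R_1 \setminus (a_0 \cup b_0)$ adjacent to that vertex is a bad rectangle. For example, if $a_0 = e_7 \to e_1$, then the quadrilateral at the corner $v_7 = e_6 \cap e_7$ has sides in cyclic order from $a_0$, $e_7$, $e_6$, $b_0$. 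Only the diagonal configuration $a_0 = e_3 \to e_9$ avoids the obstruction, as all four regions of $R_1 \setminus (a_0 \cup b_0)$ are then hexagons; hence $a_0 = e_3 \to e_9$.

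Finally, to extend to an arbitrary intersecting pair $(a,b)$, I would apply the preceding case analysis verbatim to $(a,b)$ (keeping the labeling of $e_0,\dots,e_{11}$ fixed), which forces $(a,b)$ into one of three ``diagonal'' configurations: $(e_3 \to e_9, e_6 \to e_0)$, $(e_7 \to e_1, e_{10} \to e_4)$, or $(e_{11} \to e_5, e_2 \to e_8)$. To rule out the last two, I would examine the pair $(a_0, b)$: if $b \ne e_6 \to e_0$, then the endpoints of $b$ lie in opposite halves of $R_1 \setminus a_0$, so $b$ crosses $a_0 = e_3 \to e_9$, and some endpoint of $b$ is adjacent along $\partial R_1$ to an endpoint of $a_0$, producing a bad rectangle in $R_1 \setminus (a_0 \cup b)$. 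Hence $b = e_6 \to e_0$, and analogously $a = e_3 \to e_9$. The main obstacle will be the combinatorial bookkeeping of sides and corners in each subcase, to confirm that every candidate region is genuinely an embedded disk whose boundary arcs alternate between the four curves in the order required to invoke the bad rectangle obstruction.
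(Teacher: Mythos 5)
Your proposal is correct and follows essentially the same route as the paper: the engine in both is the prohibited quadrilateral with one boundary arc on each of $\alpha_1,\beta_1,\alpha_2,\beta_2$ (which you justify by the sign computation the paper leaves implicit), combined with the orientation-forced entry/exit edges, to force both arcs of an intersecting pair onto the diagonal configuration anchored by $b_0$ exiting through $e_0$. The only real difference is the last step, where you rule out the other two diagonal configurations by exhibiting a bad rectangle between $a_0$ and $b$, whereas the paper gets uniqueness more quickly from the embeddedness of $\alpha_2$ and $\beta_2$ (distinct diameters of the dodecagon would have to intersect); both work, and your implicit uses of ``arcs cross exactly once'' are covered by coherence (Lemma~\ref{l: all coherent}) at the same level of detail as the paper.
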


\begin{proof}
Suppose $b$ enters $R_1$ through $e_i$, where $i \in \{2,6,10\}$. Then $a$ cannot intersect $e_{i \pm 1}$, since that would produce a prohibited rectangle. Therefore, $b$ must exit $R_1$ through $e_{i+6}$, or else $a$ would be forced to intersect $e_{i \pm 1}$. Symmetrically, $a$ must enter $R_1$ through $r_j$, where $j \in \{3,7,11\}$, and exit through $r_{j+6}$. It follows that $j=i+3$. Furthermore, if $a'$ and $b'$ are another pair of components that intersect, then $a'$ and $b'$ must enter and exit through the same sides as $a$ and $b$, respectively, since otherwise $a$ and $a'$ (resp.~$b$ and $b'$) would intersect, violating the embeddedness of $\alpha_2$ (resp.~$\beta_2$). Since we know that $b_0$ exits through $e_0$, we conclude that the same is true for any $b$.
\end{proof}

It follows from the previous two lemmas that for some nonnegative integers $x,y,z,x',y',z'$, with $y,y' \ge 1$, we have:
\begin{itemize}
\item $\alpha_2 \cap R_1$ consists of $x$ arcs from $e_3$ to $e_5$, $x$ arcs from $e_{11}$ to $e_9$, $y$ arcs from $e_3$ to $e_9$, $z$ arcs from $e_3$ to $e_1$, and $z$ arcs from $e_7$ to $e_9$.

\item $\beta_2 \cap R_1$ consists of $x'$ arcs from $e_6$ to $e_8$, $x'$ arcs from $e_2$ to $e_0$, $y'$ arcs from $e_6$ to $e_0$, $z'$ arcs from $e_6$ to $e_4$, and $z'$ arcs from $e_{10}$ to $e_0$.
\end{itemize}

Therefore, we have
\begin{align*}
\abs{\alpha_2 \cap \beta_1} &= zp_1 + (x+y+z) p_3 + xp_5 \\
\abs{\alpha_1 \cap \beta_2} &= (x'+y'+z') p_0 + x' p_2 + z' p_4 \\
\abs{\alpha_2 \cap \beta_2} &= yy'.
\end{align*}
We make the following three observations:
\begin{enumerate}
\item
At least one of $x,z$ is zero, since otherwise every rectangular region of $S \minus (\alpha_1 \cup \alpha_2)$ would meet $\alpha_2$, and therefore $\beta_2$ would be constrained to lie in $R_1$, a contradiction. Similarly, at least one of $x',z'$ is zero.

\item
At least one of $x, z'$ is zero, since otherwise the arcs of $\alpha_2$ from $e_3$ to $e_5$ would intersect the arcs of $\beta_2$ from $e_6$ to $e_4$, violating Lemma \ref{l: 12gon crossings}. Similarly, at least one of $x',z$ is zero.

\item
If $x=z=0$, then $\alpha_2$ consists entirely of $y$ parallel copies of a circle that runs from $e_6$ to $e_0$ in $R_1$ and then from $e_0$ back to $e_6$ outside of $R_1$. Since $\alpha_2$ is a single circle, we then have $y=1$. By the minimality of $\abs{\alpha_1 \cap \beta_1}$,
\[
p_3 = \abs{\alpha_2 \cap \beta_1} \ge \abs{\alpha_1 \cap \beta_1} = p_1 + p_3 + p_5,
\]
a contradiction. Thus, $x$ and $z$ are not both zero. Similarly, $x'$ and $z'$ are not both zero.
\end{enumerate}

Without loss of generality, let us assume that $x=0, z \ne 0, x'=0, z' \ne 0$; the opposite case is handled symmetrically. We can choose a pair of embedded octagons $D_1, D_2 \subset R_1$ such that:
\begin{itemize}
\item $D_1$ contains $\alpha_2 \cap R_1$ and edges on $e_1$, $e_3$, $e_7$, and $e_9$;

\item $D_2$ contains $\beta_2 \cap R_1$ has edges on $e_0$, $e_4$, $e_6$, and $e_{10}$;

\item $D_1 \cap D_2$ is a quadrilateral that contains $\alpha_2 \cap \beta_2$.
\end{itemize}
(See Figure \ref{f: 12gon}.) We now define $T_1$ (resp.~$T_2$) to be the union of $D_1$ (resp.~$D_2$) with strips that are tubular neighborhoods of $\gamma_1$ and $\gamma_3$ (resp.~$\gamma_0$ and $\gamma_4$); this is a genus-$1$ surface with one boundary component that contains $\alpha_2$ (resp.~$\beta_2$) in its interior but avoids $\alpha_1$ (resp.~$\beta_1$). The strips attached to $D_1$ cannot intersect the strips attached to $D_2$, since they contain segments of $\alpha_2$ and $\beta_2$, respectively; thus, $T_1 \cap T_2 = D_1 \cap D_2$, as required. Defining $U_1 = \overline{S \minus T_1}$ and $U_2 = \overline{S \minus T_2}$, it is easy to see that $T_1 \cap U_2$ and $T_2 \cap U_1$ are annuli and that $U_1 \cap U_2$ is a quadrilateral, as required.

\item[Case \ref{case:annulus} (one annulus)]

\begin{figure}
\labellist
 \pinlabel {{\color{red} $\alpha_1$}} at 318 301
 \pinlabel {{\color{blue} $\beta_1$}} at 286 332
 \pinlabel {{\color{darkred} $\alpha_2$}} at 6 170
 \pinlabel {{\color{darkblue} $\beta_2$}} at 162 332
 \small
 \pinlabel {{\color{red} $e_0$}} at 210 319
 \pinlabel {{\color{red} $e_2$}} at 162 21
 \pinlabel {{\color{red} $e_4$}} at 180 216
 \pinlabel {{\color{red} $e_6$}} at 162 122
 \pinlabel {{\color{blue} $e_1$}} at 20 237
 \pinlabel {{\color{blue} $e_3$}} at 305 170
 \pinlabel {{\color{blue} $e_5$}} at 208 188
 \pinlabel {{\color{blue} $e_7$}} at 117 170
\endlabellist
\includegraphics[scale=0.9]{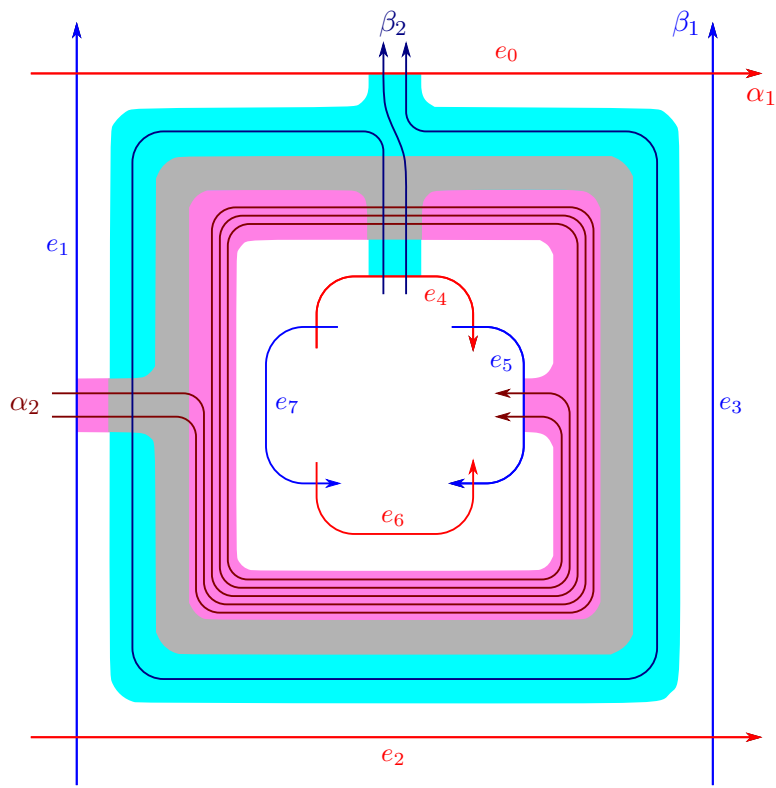}
\caption{An annulus region of $S \minus (\alpha_1 \cup \beta_1)$. The annuli $N_1,N_2$ found in Case \ref{case:annulus} of the proof of Proposition \ref{p: delta curves} are shown in pink and light blue, respectively, overlapping in the gray region.} \label{f: annulus}
\end{figure}

Label the edges of $R_1$ $e_0, \dots, e_7$, where:
\begin{itemize}
\item $e_0$, $e_2$, $e_4$, and $e_6$ are segments of $\alpha_1$, and $e_1$, $e_3$, $e_5$, and $e_7$ are segments of $\beta_1$;

\item The edges on one boundary component of $R_1$ are $e_0, e_1, e_2, e_3$, ordered according to the boundary orientation. The edges on the other component are $e_4, e_5, e_6, e_7$, ordered likewise.

\item The orientations on $e_2$, $e_3$, $e_4$, and $e_5$ agree with the boundary orientation, while the orientations on $e_0$, $e_1$, $e_6$, and $e_7$ disagree with the boundary orientation.
\end{itemize}

An argument just like Lemma \ref{l: 12gon identifications} shows:

\begin{lemma} \label{l: annulus identifications}
For $i \in \Z/8$, if $\gamma_i$ is an arc that begins in $R_1$, exits through $e_i$, and proceeds, passing through rectangular regions of $S - \alpha_1 - \beta_1$ without turning left or right, until it reenters $R_1$ through some $e_j$, then $j=i+4$. \qed
\end{lemma}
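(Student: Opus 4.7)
The plan is to adapt the proof of Lemma \ref{l: 12gon identifications} to the annulus setting; by the rotational symmetry among $i = 0, \dots, 7$, it suffices to treat $i = 0$. The orientation conventions on $e_0, \dots, e_7$ together with the coherence of $\alpha_1 \cap \beta_1$ imply that $\gamma_0$, which runs parallel to $\beta_1$ through the rectangular regions, must cross $\alpha_1$ with positive sign both at its exit edge $e_0$ and at its reentry edge $e_j$. Since the direction of $\gamma_0$ reverses (exit vs.\ reentry) relative to $\partial R_1$, the orientations of $e_0$ and $e_j$ as oriented $\alpha_1$-arcs must have opposite relations to their respective boundary components of $R_1$. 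Given that $e_0$ disagrees with the boundary orientation, while $e_2$ and $e_4$ agree (and $e_6$ disagrees), this reduces the candidates to $j \in \{2, 4\}$.

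To eliminate $j = 2$, I will use pushoffs $\beta_1'$ and $\beta_1''$ of $\beta_1$ to its two sides, as in the 12-gon argument. The intersection $\beta_1' \cap R_1$ decomposes into four arcs $e_1', e_3', e_5', e_7'$ parallel to the respective $\beta_1$-edges of $R_1$, with $e_1'$ connecting interior points of $e_0$ and $e_2$. Up to isotopy, $\gamma_0$ is a segment of one of the two pushoffs, say $\beta_1'$. If $j = 2$, then the endpoints of $\gamma_0$ on $\partial R_1$ coincide with those of $e_1'$, so $\gamma_0 \cup e_1'$ is an embedded simple closed curve parallel to $\beta_1$ on the left, and must therefore equal the entire pushoff $\beta_1'$. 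This contradicts the presence of the additional arcs $e_3', e_5', e_7' \subset \beta_1' \cap R_1$. The case $\gamma_0 \subset \beta_1''$ is handled symmetrically using $e_3''$. We conclude $j = 4$, as claimed.

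The only point requiring care is correctly tracking the orientation constraint on $e_j$, since $\partial R_1$ now has two components (of opposite boundary orientation) rather than one; beyond this, the pushoff closure argument is a mechanical transplant from the 12-gon case and presents no new obstacle.
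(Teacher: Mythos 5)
Your proof takes the same route the paper intends: the paper proves Lemma \ref{l: annulus identifications} by the verbatim analogue of Lemma \ref{l: 12gon identifications}, namely the coherence/orientation bookkeeping that forces the reentry edge to have the opposite boundary-relation from the exit edge (so $j\in\{2,4\}$ when $i=0$), followed by the pushoff argument that excludes $j=2$. Your overall structure is right, but one claim in your write-up is false and needs repair: a single pushoff $\beta_1'$ does \emph{not} meet $R_1$ in four arcs parallel to all of $e_1,e_3,e_5,e_7$. Along each $\beta_1$-edge of $\partial R_1$ the annulus lies on only one side of $\beta_1$, and by the stated orientation pattern the four possible arcs are split two-and-two between the two pushoffs (the pushoff entering $R_1$ along $e_1$ also enters along $e_7$, while the other pushoff accounts for $e_3$ and $e_5$); this mirrors the $12$-gon case, where $\beta_1'\cap R_1$ consists of only three of the six conceivable arcs. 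The contradiction survives in corrected form: realizing the exterior part of $\gamma_0$ as a segment of whichever pushoff crosses $e_0$ adjacent to $e_1$ (or to $e_3$), if $j=2$ then closing it up with that pushoff's arc parallel to $e_1$ (resp.\ $e_3$) gives an embedded circle that must be the entire pushoff, yet it misses the pushoff's second arc, parallel to $e_7$ (resp.\ $e_5$), which lies in the interior of $R_1$. Finally, ``rotational symmetry among $i=0,\dots,7$'' is not literally present, since edges alternate between $\alpha_1$ and $\beta_1$ and the two boundary components play different roles; what is true, and what the paper also relies on, is that the remaining cases follow by the analogous argument with the roles of $\alpha_1$ and $\beta_1$ and of the two boundary components exchanged.
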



According to our orientation conventions, segments of $\alpha_2$ may enter $R_1$ through $e_1$ or $e_7$ and exit through $e_3$ or $e_5$, and segments of $\beta_2$ may enter through $e_2$ or $e_4$ and exit through $e_0$ and $e_6$.
\begin{lemma} \label{l: annulus allarcssame}
Every component of $\alpha_2 \cap R_1$ enters $R_1$ through the same edge (say $e_i$, where $i \in \{1,7\}$) and exits through $e_{i+4}$. Likewise, every component of $\beta_2 \cap R_1$ enters $R_1$ through the same edge (say $e_j$, where $j  \in \{2,4\}$) and exits through $e_{j+4}$.
\end{lemma}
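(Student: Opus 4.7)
The plan is to run an outermost-disk argument very much in the spirit of Case~\ref{case:12gon}, playing against the basic rectangle obstruction in a strong, coherent Heegaard diagram: no component of $S \minus (\alpha_1 \cup \alpha_2 \cup \beta_1 \cup \beta_2)$ may be a quadrilateral whose four sides come from the four distinct attaching curves, as this would immediately produce a generator of opposite sign via Lemma~\ref{l: samesign}. Throughout, Lemma~\ref{l: all coherent} supplies the coherence of each pair $\alpha_i \cap \beta_j$.

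First I would rule out boundary-parallel arcs of $\alpha_2 \cap R_1$. Suppose some arc of $\alpha_2 \cap R_1$ runs from $e_1$ to $e_3$, and let $a$ be an outermost such arc; that is, $a$ cuts off a disk $D \subset R_1$ with $D \cap \partial R_1$ equal to a subarc of $e_1$, all of $e_2$, and a subarc of $e_3$, and with no other arc of $\alpha_2$ in $\textup{int}(D)$. Now examine how $\beta_2 \cap D$ can look. Since $\beta_2$ is disjoint from $\beta_1 \supset e_1 \cup e_3$, any component of $\beta_2 \cap D$ has endpoints on $e_2 \cup a$. The three cases are:
\begin{itemize}
\item both endpoints on $e_2$: the two intersections of $\beta_2$ with $\alpha_1$ along $e_2$ would have opposite signs, violating coherence of $\alpha_1 \cap \beta_2$;
\item both endpoints on $a$: same conclusion for $\alpha_2 \cap \beta_2$;
\item one endpoint on $e_2$ and one on $a$: one side of the $\beta_2$-subarc inside $D$ is bounded by arcs of $e_1$, $e_2$, the $\beta_2$-subarc, and $a$, which is a quadrilateral with sides on the four distinct curves $\beta_1, \alpha_1, \beta_2, \alpha_2$ — the forbidden configuration.
\end{itemize}
Hence $\beta_2 \cap D = \emptyset$; in particular, no arc of $\beta_2$ enters $R_1$ through $e_2$.

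Next I would apply the same outermost-disk analysis symmetrically. An outermost $\alpha_2$-arc from $e_7$ to $e_5$ cuts off a disk around $e_6$ and forbids $\beta_2$ from entering $R_1$ through $e_6$. Dually, an outermost $\beta_2$-arc from $e_2$ to $e_0$ cuts off a disk around $e_1$ and forbids $\alpha_2$ from entering through $e_1$, while an outermost $\beta_2$-arc from $e_4$ to $e_6$ forbids $\alpha_2$ from entering through $e_5$. Since the hypothesis $\abs{\alpha_2 \cap \beta_2} \ge 2$ and the rectangle obstruction place all of $\alpha_2 \cap \beta_2$ inside $R_1$, both $\alpha_2 \cap R_1$ and $\beta_2 \cap R_1$ are nonempty. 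Chasing the "forbidden-entry" implications just produced, we conclude that neither curve can have a boundary-parallel arc in $R_1$: a boundary-parallel $\alpha_2$-arc would shut off one of the two entry edges available to $\beta_2$, and a boundary-parallel $\beta_2$-arc would shut off one of the two entry edges available to $\alpha_2$; combining the two removes all entry edges of one curve or the other, contradicting nonemptiness.

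Finally, every arc of $\alpha_2 \cap R_1$ crosses the annulus from one boundary component to the other. Disjoint embedded arcs in an annulus connecting its two boundary components are ambient isotopic rel any fixed pair of endpoints on the boundary, so all such arcs share the same pair of edges; orientation forces that pair to be $\{e_i, e_{i+4}\}$ with $i \in \{1,7\}$. The identical reasoning applies to $\beta_2$, yielding $\{e_j, e_{j+4}\}$ with $j \in \{2,4\}$. This is precisely the conclusion of Lemma~\ref{l: annulus allarcssame}.

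The main obstacle is the mixed-endpoints subcase in the first step: one has to check carefully that the $\beta_2$-subarc inside $D$ actually separates off a genuine quadrilateral with four distinct curve labels, as opposed to a region whose boundary repeats a curve or has extra sides. This is a bookkeeping check using the cyclic order $e_1, a, e_3, e_2$ along $\partial D$ and the fact that $a$ and the two adjacent boundary pieces belong to pairwise distinct attaching curves; once that is nailed down the remaining steps are routine.
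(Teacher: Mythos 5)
The crux of the lemma---that \emph{all} components of $\alpha_2 \cap R_1$ use the same pair of edges---is exactly what your argument does not establish. Your final step rests on the claim that disjoint embedded arcs joining the two boundary circles of an annulus must ``share the same pair of edges''; that is false. Two disjoint essential arcs in $R_1$, one running from $e_1$ to $e_5$ and another from $e_7$ to $e_3$, coexist perfectly well (cut $R_1$ along the first arc and join the remaining two endpoints inside the resulting disk), both are compatible with the orientation constraints, and nothing in your analysis local to $R_1$---coherence or the forbidden quadrilateral---excludes this mixed configuration, since such arcs need not cobound any bad region inside $R_1$. The paper kills it globally: if components of $\alpha_2 \cap R_1$ meet two edges at distance two (e.g.\ $e_1$ and $e_3$, which is what the mixed configuration produces), then by Lemma~\ref{l: annulus identifications} the continuations of $\alpha_2$ outside $R_1$ sweep through both corridors of rectangular regions, hence through every rectangular region of $S \minus (\alpha_1 \cup \beta_1)$; the rectangle obstruction then forces $\beta_2$ to miss every rectangle, so $\beta_2 \cap \alpha_1 = \emptyset$, contradicting $\abs{\alpha_1 \cap \beta_2} \ge 2$. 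Some argument that leaves $R_1$ is indispensable here, and your proof has none.

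The elimination of boundary-parallel arcs is also not closed. The outermost-disk analysis itself is sound as far as it goes (the two coherence cases and the quadrilateral with sides on all four curves are correct, and the latter is precisely the obstruction the paper uses), but note first that an $\alpha_2$-arc from $e_1$ to $e_3$ may cut off the disk containing $e_0$ rather than $e_2$---you fixed a side without justification---and, more importantly, the only output of the argument is that one single $\alpha_1$-edge of $\partial R_1$ is closed to $\beta_2$. That by itself contradicts nothing: $\beta_2$ can still enter $R_1$ through its other available edge. Hence the sentence ``combining the two removes all entry edges of one curve or the other'' is a non sequitur: one boundary-parallel $\alpha_2$-arc bans one edge for $\beta_2$, one boundary-parallel $\beta_2$-arc bans one edge for $\alpha_2$, and each curve retains an open edge, so nonemptiness of $\alpha_2 \cap R_1$ and $\beta_2 \cap R_1$ is not violated. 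In the paper the boundary-parallel case is subsumed in the same corridor count (the components $a$ and $a'$ there are allowed to coincide), which is the global ingredient missing from both halves of your argument.
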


\begin{proof}
If there is some component $a$ of $\alpha_2 \cap R_1$ that intersects $e_i$, and some component $a'$ (possibly the same as $a$) that intersects $e_{i\pm 2}$, then by Lemma \ref{l: annulus identifications}, $\alpha_2$ meets every rectangular region of $S - \alpha_1 - \beta_1$, which implies that $\alpha_2$ does not meet any rectangular region. However, this implies that $\beta_2 \cap \alpha_1 = \emptyset$, a contradiction. An analogous argument applies for $\beta_2$.
\end{proof}

Assume for concreteness that segments of $\alpha_2$ enter $R_1$ through $e_1$ and exit through $e_5$, and segments of $\beta_2$ enter $R_1$ through $e_4$ and exit through $e_0$. (The other cases are handled symmetrically.) Lemma \ref{l: annulus allarcssame} together implies that the multicurves $\alpha_2 \cap R_1$ and $-\beta_2 \cap R_1$ satisfy the hypotheses of Lemma \ref{l: annulus}; therefore, $(R_1, R_1 \cap (\alpha_2 \cup \beta_2))$ may be identified with the standard configuration given in Construction \ref{const: annulus}. As shown in Figure \ref{f: annulus}, there are embedded annuli $N_1, N_2 \subset R_1$ such that:
\begin{itemize}
\item $N_1$ contains $\alpha_2 \cap R_1$, and $\partial N_1 \cap \partial R_1$ consists of proper subarcs of $e_1$ and $e_5$;
\item $N_2$ contains $\beta_2 \cap R_1$, and $\partial N_2 \cap \partial R_1$ consists of proper subarcs of $e_0$ and $e_4$;
\item $N_1 \cap N_2$ is an annulus, and $\abs{\partial N_1 \cap \partial N_2} = 4$.
\end{itemize}
Let $T_1$ (resp.~$U_2$) be the union of $N_1$ (resp.~$N_2$) with a $1$-handle that follows $\gamma_1$ (resp.~$\gamma_0$); this is then a genus-$1$ surface with one boundary component that contains $\alpha_2$ (resp.~$\beta_2$) in its interior but avoids $\alpha_1$ (resp.~$\beta_1$). Moreover, $T_1 \cap U_2$ is an annulus whose boundary components each consist of an arc of $\delta_1 = \partial T_1$ and an arc of $\delta_2 = \partial U_2$. If we define $U_1 = \overline{S \minus T_1}$ and $T_2 = \overline{S \minus U_2}$, it is easy to verify that the remaining intersections are as required.

\item[Case \ref{case:torus} (one punctured torus)]

Label the edges of $R_1$ $e_0, e_1, e_2, e_3$, so that $e_0$ and $e_2$ are segments of $\alpha_1$, and $e_1$ and $e_3$ are segments of $\beta_1$. In this case, a path $\gamma_i$ that exits $R_1$ through $e_i$ will proceed through all $v-1$ rectangular regions before reentering through $e_{i+2}$ (indices modulo $4$). Thus, we see that $\alpha_2$ and $\beta_2$ cannot both exit $R_1$, or else they would be forced to intersect in a rectangular region of $S - \alpha_1 - \alpha_2$. Hence, either $\alpha_1 \cap \beta_2=\emptyset$ or $\alpha_2 \cap \beta_1 = \emptyset$, violating our assumption that $\abs{\alpha_i \cap \beta_j} \ge 2$. Thus, case \ref{case:torus} does not arise.

\end{description}

This concludes the proof of Proposition \ref{p: delta curves}.
\end{proof}

\begin{corollary}
\label{c: mingenus}
The minimum genus of a strong Heegaard diagram for a strong L-space can exceed its Heegaard genus.
\end{corollary}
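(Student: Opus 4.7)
The plan is to exhibit a single strong L-space $Y$ whose Heegaard genus is strictly less than the minimum genus of any strong Heegaard diagram presenting $Y$. Since every strong L-space of Heegaard genus $1$ is a lens space (which admits its standard genus-$1$ strong diagram), the simplest possible gap arises at Heegaard genus $2$. Reading Theorem \ref{t: genus2} contrapositively, any strong L-space of Heegaard genus $2$ that is not homeomorphic to $\Sigma(L(a_1,a_2,a_3,a_4))$ for some $a_i$ of the same sign must have all of its strong Heegaard diagrams of genus at least $3$, yielding the desired strict inequality.

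To produce such a $Y$, I would invoke the trichotomy noted in the introduction: every $\Sigma(L(a_1,a_2,a_3,a_4))$ is a connected sum of lens spaces, a small Seifert fibered space, or a specific graph manifold with two Seifert-fibered JSJ pieces, and in particular none of them is hyperbolic. Accordingly, it suffices to exhibit a non-split alternating link $L \subset S^3$ whose branched double cover $\Sigma(L)$ is both hyperbolic and of Heegaard genus $2$: by Greene's theorem \cite[Corollary 3.5]{GreeneSpanning}, $\Sigma(L)$ will then be a strong L-space, and by Theorem \ref{t: genus2} it admits no strong Heegaard diagram of genus $2$.

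The natural source of candidates is alternating $3$-bridge knots $K$ with hyperbolic branched double cover. Indeed, any $b$-bridge presentation of $K$ produces a Heegaard splitting of $\Sigma(K)$ of genus $b - 1$, so for $b(K) = 3$ the Heegaard genus of $\Sigma(K)$ is at most $2$; and if $\Sigma(K)$ is hyperbolic then it is not a lens space, so its Heegaard genus is exactly $2$. The main obstacle is to locate one such knot and verify that its branched double cover is in fact hyperbolic: concretely, low-crossing alternating knots of bridge number $3$ (for instance certain $8$-crossing examples) are the obvious places to look, and the hyperbolicity can be certified using SnapPy or by directly excluding the Seifert-fibered and JSJ-decomposable cases. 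Once such an example is in hand, combining Greene's theorem with Theorem \ref{t: genus2} yields the desired strict inequality immediately.
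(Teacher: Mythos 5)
Your strategy is sound and would prove the corollary, but it takes a different route from the paper's. You rule out genus-$2$ strong diagrams geometrically: a $3$-bridge alternating knot gives a genus-$2$ splitting of its branched double cover, hyperbolicity forces the Heegaard genus to be exactly $2$, and Theorem \ref{t: genus2} plus the trichotomy for $\Sigma(L(a_1,a_2,a_3,a_4))$ (never hyperbolic) excludes a genus-$2$ strong diagram. The paper's Example \ref{ex: borromean} instead works with the Borromean rings, whose branched double cover is \emph{not} hyperbolic, and rules out a genus-$2$ strong diagram by pure algebra: after applying Proposition \ref{p: 1-extendible}, irreducibility forces $M(H)$ to have no zero entries, $H_1 \cong \Z/4 \oplus \Z/4$ forces every entry to be a nonzero multiple of $4$, and the P\'olya condition then gives $\abs{\det M(H)} \ge 32 > 16$, a contradiction; this is self-contained and does not even use Theorem \ref{t: genus2}. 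The paper's second family is closer to yours in spirit but replaces hyperbolicity by the Tait flyping theorem: any prime alternating $3$-bridge diagram without nontrivial Conway circles yields a link not isotopic to any $L(a_1,a_2,a_3,a_4)$ with same-sign $a_i$, so Theorem \ref{t: genus2} applies without geometric input. The one caveat with your write-up is that it stops short of actually exhibiting and certifying a concrete example: you acknowledge that locating an alternating $3$-bridge knot with hyperbolic branched double cover is ``the main obstacle'' and defer it to SnapPy or a case analysis. Since the corollary is an existence statement, a complete proof must close this step; it can be closed (e.g., a prime alternating $3$-bridge knot that is neither a Montesinos link nor contains an essential Conway sphere has hyperbolic branched double cover by the orbifold theorem, and such knots exist among $8$-crossing alternating knots), but as written your argument is conditional on producing that example, whereas the paper's Borromean-rings computation is unconditional and elementary.
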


This corollary is proven by either of the following examples:

\begin{example} \label{ex: borromean}
Let $L$ denote the Borromean rings, which is a prime, three-component, alternating link of bridge number 3. Therefore, $Y=\Sigma(L)$ is an irreducible strong L-space of Heegaard genus two. Suppose $Y$ can represented by a strong Heegaard diagram $H$ of genus $2$, which may be assumed to be $1$-extendible by Proposition \ref{p: 1-extendible}. Since $Y$ is irreducible, $M(H)$ does not contain a $0$ entry.  Since $M(H)$ is a $2 \times 2$ presentation matrix for $H_1(Y;\bZ) \cong \bZ/4 \oplus \bZ/4$, every entry in $M(H)$ is a non-zero multiple of 4. However, since $M(H)$ is a P\'olya matrix, we must then have
\[
\abs{\det(M(H))} \ge 4 \cdot 4 + 4 \cdot 4 > 16,
\]
a contradiction. Thus, any strong Heegaard diagram for $Y$ has genus greater than $2$.

More generally, it is straightforward to find prime, alternating 3-bridge diagrams without non-trivial Conway circles.  By the solution of the Tait flyping conjecture in this case \cite{mt:tait, schrijver:tait, GreeneLattices}, no such link is isotopic to a link of the form $L(a_1,a_2,a_3,a_4)$ with all $a_i$ the same sign.  Therefore, its branched double cover is a strong L-space of Heegaard genus $2$, but it does not possess a strong Heegaard diagram of genus $2$.
\end{example}

\begin{example} \label{ex: dunfield}
Using the program SnapPy, Nathan Dunfield has identified $316$ strong L-spaces among the $11,031$ small hyperbolic manifolds in the Hodgson--Weeks census. Most of these manifolds have Heegaard genus $2$, and all have Heegaard genus at most $3$. Theorem \ref{t: genus2} implies that these manifolds cannot admit strong diagrams of genus $2$. In fact, Dunfield was only able to find strong Heegaard diagrams of genus $4$ through $7$ for any of these manifolds. Moreover, all of Dunfield's examples can be realized as Dehn fillings of the minimally twisted $5$--chain link, so they are branched double covers of links obtained from filling rational tangles into the pentacle graph, and all of these links are in fact alternating. (See \cite{DunfieldThurstonExperiements, MartelliPetronioRoukema}.)
\end{example}

Corollary \ref{c: mingenus} motivates the following questions:

\begin{question} \label{q: stronggenus}
Can the difference between the Heegaard genus of a strong L-space $Y$ and the minimal genus of a strong Heegaard diagram for $Y$ be arbitrarily large?
\end{question}

\begin{question} \label{q: graph}
Is every strong L-space admitting a strong Heegaard diagram of genus $3$ a Seifert fibered space or graph manifold?
\end{question}

\section{Floer simple knots} \label{sec: simpleknots}

Given a Heegaard diagram $H = (S,\alpha,\beta)$ for a $3$-manifold $Y$, a pair of basepoints $w,z \in S \minus (\alpha \cup \beta)$ determines an oriented knot $K \subset Y$, as explained in \cite{OSzKnot}. Briefly, we let $\gamma_\alpha$ (resp.~$\gamma_\beta$) be an arc from $w$ to $z$ in the complement of the $\alpha$ (resp.~$\beta$) circles, pushed into the $\alpha$ (resp.~$\beta$) handlebody, and we set $K = \gamma_\alpha \cup -\gamma_\beta$. The pair of basepoints induce a filtration on the complex $\CF(H)$ whose associated graded complex is denoted $\CFK(H)$, and the homology of $\CFK(H)$ is an invariant called the \emph{knot Floer homology} of $K$, denoted $\HFK(Y,K)$ \cite{OSzKnot, RasmussenThesis}.

A knot $K$ in an L-space $Y$ is called \emph{Floer simple} if $\HFK(Y,K) \cong \HF(Y)$. The classification of Floer simple knots is of great interest, in part because any Floer simple knot minimizes rational genus in its homology class \cite{NiWuRational}. In particular, the only nulhomologous knot in $Y$ that is Floer simple is the unknot. Note that there exist homology classes in certain L-spaces that do not contain any Floer simple knot \cite[Proposition 7.4]{LevineRubermanStrleNonorientable}.

Strong Heegaard diagrams provide a natural source of Floer simple knots. Given a strong Heegaard diagram $H$ for a strong L-space $Y$, a knot $K$ is called \emph{simple with respect to $H$} if it can be represented by a pair of basepoints in $H$; it follows that $K$ is Floer simple. Each homology class in a lens space contains a unique simple knot (with respect to the genus-1 strong Heegaard diagram). One formulation of the famous Berge conjecture asserts that any knot in a lens space with an integer-slope surgery to $S^3$ must be a simple knot \cite{berge:lens}. This would follow from the conjecture that every Floer simple knot in a lens space is simple \cite{HeddenBerge, RasmussenLens}.

Thus, topological considerations motivate the investigation of the existence and uniqueness of simple knots in strong L-spaces, as in the following proposition.

\begin{proposition}\label{p: simple}
If $Y$ is presented by a strong diagram $H$ of genus 2, then every homology class in $H_1(Y;\bZ)$ is represented by a simple knot with respect to $H$. \qed
\end{proposition}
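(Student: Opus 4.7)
The plan is to use the explicit structural identification of $H$ provided by Theorem~\ref{t: genus2}: after a sequence of isotopies and handleslides, we may assume $H = H(a_1,a_2,a_3,a_4)$ from Construction~\ref{const: H(a1a2a3a4)} for some $a_i = p_i/q_i \in \bQ \cup \{1/0\}$ of the same sign. The goal is then to realize every homology class by moving a second basepoint through the regions of $S \minus (\alpha \cup \beta)$ in this diagram.

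First I would fix a region $D_0$ with basepoint $w_0 \in D_0$ and define $\phi\colon \text{Regions}(H) \to H_1(Y;\bZ)$ by $\phi(D) = [K_{w_0,z_D}]$ for any $z_D \in D$, which depends only on $D$. The key observation is that when two regions $D, D'$ are separated by an arc of $\alpha_i$ (respectively $\beta_j$), the difference $\phi(D') - \phi(D) \in H_1(Y;\bZ)$ is, up to sign, a fixed class $g_i^\alpha$ (resp.\ $g_j^\beta$), independent of the specific arc crossed. This class is represented by a small loop in $S$ crossing $\alpha_i$ (resp.\ $\beta_j$) once transversely and disjoint from the other attaching curves, read modulo $\langle [\alpha_1],[\alpha_2],[\beta_1],[\beta_2] \rangle$. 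Since the region-adjacency graph on $S\minus(\alpha\cup\beta)$ is connected, the image of $\phi$ is the subgroup of $H_1(Y;\bZ)$ generated by $\{g_1^\alpha,g_2^\alpha,g_1^\beta,g_2^\beta\}$.

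The next step is to verify that these four classes generate all of $H_1(Y;\bZ) = H_1(S;\bZ)/\langle [\alpha_1],[\alpha_2],[\beta_1],[\beta_2]\rangle$. Using the plumbing description of $S = N \cup N'$ in Construction~\ref{const: H(a1a2a3a4)}, natural candidate representatives for the $g_i^\alpha$ and $g_j^\beta$ are pushoffs of the cores of the plumbing handles $R_1, R_2 \subset N$ and $R_1, R_2 \subset N'$, possibly corrected by integer multiples of $[\alpha_i],[\beta_j]$ so as to have the prescribed intersection numbers with the attaching curves. A direct computation shows that these four plumbing-core classes together with $[\alpha_1],[\alpha_2],[\beta_1],[\beta_2]$ form a $\bZ$-basis of $H_1(S;\bZ) \cong \bZ^4$, so their images span the cokernel $\coker M(H) \cong H_1(Y;\bZ)$.

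The hard part will be this last step: the crossing classes $g_i^\alpha,g_j^\beta$ are only canonical in the quotient $H_1(Y;\bZ)$, so choosing explicit representatives whose intersection data with the $[\alpha_i]$ and $[\beta_j]$ can be read off the plumbing picture requires care. Once the explicit identification with plumbing cores is made, the intersection matrix $M(H) = \bigl(\begin{smallmatrix}-p_1q_2-q_1p_2 & -q_1p_4 \\ -q_2p_3 & p_3q_4+q_3p_4\end{smallmatrix}\bigr)$ controls the computation, and the coherence of the $a_i$ ensures no unexpected cancellations. With generation in hand, surjectivity of $\phi$ follows formally from connectedness of the region graph.
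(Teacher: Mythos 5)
Your approach has a genuine gap, and it appears already at the structural level rather than in the ``hard last step'' you flag. First, you fix one basepoint $w_0$ and move only the second one, so the set of classes you can realize is the image of a map $\phi$ defined on the regions of $S \minus (\alpha\cup\beta)$. A genus-$2$ diagram with $n = \abs{\alpha\cap\beta}$ intersection points has exactly $n-2$ regions, so $\abs{\im \phi} \le n-2$, whereas $\det(Y)$ is quadratic in the intersection numbers: e.g.\ the connected sum of the standard genus-$1$ strong diagrams for $L(p,1)$ and $L(q,1)$ (which is of the form $H(a_1,a_2,a_3,a_4)$, so it survives your reduction) has $p+q-2$ regions but $\det(Y)=pq$. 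Hence with one basepoint frozen the conclusion is numerically impossible in general; any correct argument must let \emph{both} basepoints vary. Second, even granting that, the inference ``the region-adjacency graph is connected, therefore $\im\phi$ is the whole subgroup generated by the crossing classes $g_i^\alpha, g_j^\beta$'' is a non sequitur: connectedness only says each region's class is \emph{some} signed combination of crossing classes, not that every combination is realized by some region (the values of $\phi$ depend only on the finitely many terminal regions, not on freely chosen paths). The paper's own genus-$3$ example ($\#^3 L(p,q)$, where the crossing classes do generate $H_1$ but pairs of basepoints realize at most $(3p-2)^2 < p^3$ classes) shows this principle fails even when both basepoints move, so genuine work is needed exactly where you wave at connectedness. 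A smaller issue: Proposition \ref{p: simple} concerns the given diagram $H$, so replacing $H$ by a handleslide-equivalent model $H(a_1,a_2,a_3,a_4)$ via Theorem \ref{t: genus2} changes the meaning of ``simple with respect to $H$.''

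For comparison, the paper does not use Theorem \ref{t: genus2} at all: it proves the stronger Theorem \ref{t: basepoints}, valid for \emph{any} genus-$2$ diagram of a rational homology sphere (strongness is irrelevant to the realization statement). There both basepoints are taken on parallel pushoffs $\beta_1'$, $\beta_2'$ of the two $\beta$ curves; one records the partial intersection vectors with $(\alpha_1,\alpha_2)$ along each pushoff, obtaining rectilinear paths $\Gamma_1,\Gamma_2$ from $(0,0)$ to the columns $v_1,v_2$ of $M(H)$, and a degree-one argument for the map $S^1\times S^1 \to \bR^2/\gen{v_1,v_2}$, $(x,y)\mapsto f_1(x)-f_2(y)$, shows the difference set hits every class of $\bZ^2/\gen{v_1,v_2}\cong H_1(Y;\bZ)$. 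That argument is what replaces your appeal to connectedness, and it is precisely where the genus-$2$ hypothesis enters.
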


Proposition \ref{p: simple} follows at once from the following theorem.


\begin{theorem}
\label{t: basepoints}
Given a genus two Heegaard diagram $H = (S,\alpha,\beta)$ of a rational homology sphere $Y$ and any homology class $x \in H_1(Y;\bZ)$, there exists a pair of basepoints $z_1,z_2 \in S$ such that the doubly-pointed Heegaard diagram $(S,\alpha,\beta,z_1,z_2)$ represents a knot $K \subset Y$ with $[K] = x$.
\end{theorem}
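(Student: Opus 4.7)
The plan is to fix one basepoint $z_1$ in some region of $S \minus (\alpha \cup \beta)$ and study how the homology class of the associated knot depends on the region containing $z_2$. My first step would be to make precise the standard description
\[
[K] = [\gamma_\alpha - \gamma_\beta] \in H_1(S;\bZ) / (L_\alpha + L_\beta),
\]
where $L_\alpha$, $L_\beta$ are the Lagrangians spanned by the classes $[\alpha_i]$, $[\beta_j]$, and the quotient is identified with $H_1(Y;\bZ)$. Any two choices of the arc $\gamma_\alpha \subset S \minus \alpha$ from $z_1$ to $z_2$ differ by a 1-cycle whose image in $H_1(S;\bZ)$ lies in $L_\alpha$, and symmetrically for $\gamma_\beta$, so $[K]$ is well-defined from the pair of regions containing $(z_1,z_2)$.

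The second step is to compute the effect on $[K]$ of moving $z_2$ along a short path $\delta$ that crosses a single arc of $\alpha_i$ once, positively. Choosing a new arc $\gamma_\alpha' \subset S \minus \alpha$ from $z_1$ to the new basepoint $z_2'$, I would form the closed loop $\ell = \gamma_\alpha' - (\gamma_\alpha \cdot \delta)$ in $S$. With respect to a symplectic basis $\{[\alpha_i], [\tilde\beta_i]\}_{i=1}^g$ of $H_1(S;\bZ)$ characterized by $[\alpha_j] \cdot [\tilde\beta_i] = \delta_{ij}$, a direct calculation gives $[\ell] \cdot [\alpha_i] = -1$ and $[\ell] \cdot [\alpha_k] = 0$ for $k \ne i$, so $[\ell] \equiv [\tilde\beta_i]$ modulo $L_\alpha$. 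Taking $\gamma_\beta' = \gamma_\beta \cdot \delta$ (since $\delta$ avoids $\beta$) then yields $[K'] - [K] = [\ell] = [\tilde\beta_i]$ in $H_1(Y;\bZ)$. A symmetric computation shows that crossing a $\beta_j$-arc contributes $-[\tilde\alpha_j]$. One checks via Poincar\'e duality on $S$ that along any closed loop in $S \minus \{z_1\}$ these contributions cancel modulo $L_\alpha + L_\beta$, confirming that $[K]$ is a function only of the regions.

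The third step uses the identification $H_1(Y;\bZ) = \bZ^g / M(H) \bZ^g$, under which $\{[\tilde\beta_i]\}_{i=1}^g$ projects to the standard basis of $\bZ^g$ and thus generates $H_1(Y;\bZ)$. Because $Y$ is a rational homology sphere, $\det M(H) \ne 0$, so no row of $M(H)$ vanishes, and each $\alpha_i$ contributes at least one arc to the diagram. Given any $x = \sum_i n_i [\tilde\beta_i] \in H_1(Y;\bZ)$, I would construct an oriented path in $S \minus \{z_1\}$ beginning near $z_1$ whose signed intersection with $\alpha_i$ is $n_i$ for each $i$; placing $z_2$ at the endpoint of this path then produces a doubly-pointed diagram with $[K] = x$.

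The main obstacle is the change-of-basepoint calculation in step two: one must track carefully how $\gamma_\alpha$ and $\gamma_\beta$ are modified as $z_2$ crosses an arc and verify that the resulting change in $[K]$ is independent of these modifications and is consistent with $[K]$ being a function only of the pair of regions. In the genus-$2$ setting $M(H)$ is a nonsingular $2 \times 2$ integer matrix and $H_1(Y;\bZ)$ has at most two generators, which makes the path construction in step three concrete and elementary.
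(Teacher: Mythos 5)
Your steps 1--2 (the description of $[K]$ as $[\gamma_\alpha - \gamma_\beta]$ in $H_1(S;\bZ)/(L_\alpha+L_\beta)$ and the change-of-basepoint formula) are fine, but the whole difficulty of the theorem is concentrated in your step 3, and as written it has a genuine gap. You propose to realize $x=\sum_i n_i[\tilde\beta_i]$ by constructing a path from $z_1$ whose signed intersection with each $\alpha_i$ is $n_i$ and placing $z_2$ at its end. If that path is allowed to cross the $\beta$ curves, then by your own step-2 computation each crossing of $\beta_j$ changes $[K]$ by $\mp[\tilde\alpha_j]$, and these classes are in general nonzero in $H_1(Y;\bZ)$; your proposal never accounts for them, so the resulting knot represents $\sum_i n_i[\tilde\beta_i] - \sum_j l_j[\tilde\alpha_j]$ for uncontrolled $l_j$, not $x$. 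If instead you require the path to avoid $\beta$ (so only $\alpha$-crossings contribute), then the assertion that a path in the planar surface $S\minus\beta$ can realize every residue $(n_1,n_2)$ modulo the columns of $M(H)$ is precisely the nontrivial content of the theorem, and you give no argument for it. The signed-crossing vectors of such paths are highly constrained, and the constraint is not vacuous: the paper points out immediately after the statement that the analogous claim is \emph{false} in genus $3$ (for $\#^3 L(p,q)$, pairs of basepoints reach at most $(3p-2)^2$ classes while $|H_1|=p^3$). Since your sketch makes no essential use of $g=2$ beyond saying the computation is ``concrete and elementary,'' it would equally well ``prove'' the false higher-genus statement, which is a sure sign the key step is missing.

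For comparison, the paper's proof supplies exactly the ingredient you are missing: it records the sequences $S_1,S_2$ of $\alpha$-intersection vectors along parallel push-offs of $\beta_1$ and $\beta_2$ based at a common region, views them as lattice paths from $(0,0)$ to the columns $v_1,v_2$ of $M(H)$, and proves that $S_1-S_2$ meets every coset of $\bZ^2/\gen{v_1,v_2}$ by showing that the induced map $S^1\times S^1\to\bR^2/\gen{v_1,v_2}$, $(x,y)\mapsto f_1(x)-f_2(y)$, has degree one and hence is surjective, followed by a lattice-point argument identifying $S_1-S_2$ with $(\Gamma_1-\Gamma_2)\cap\bZ^2$. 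It is this surjectivity statement---which genuinely uses that there are exactly two $\beta$ curves matching the two dimensions of the torus $\bR^2/\gen{v_1,v_2}$---that your step 3 would need to replace, and nothing in your outline does so.
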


Theorem \ref{t: basepoints} does not generalize to diagrams of higher genus.  As an example, take a genus three Heegaard diagram $H$ for $Y = \#^3 L(p,q)$ by forming the connected sum of three genus one strong Heegaard diagrams for its individual summands.  Then $H$ contains $3p-2$ regions, so the knots in $Y$ represented by pairs of basepoints in $H$ represent at most $(3p-2)^2$ homology classes, whereas $H_1(Y;\bZ)$ has order $p^3$.


\begin{proof}[Proof of Theorem \ref{t: basepoints}]
First we recall how to determine the homology class represented by a knot $K \subset Y$ presented by a doubly-pointed Heegaard diagram $H=(S,\alpha,\beta,z_1,z_2)$.  The $i^{\mathrm{th}}$ column of the intersection matrix $M(H')$ is the vector of intersection numbers
\[
v_i = (\alpha_1 \cdot \beta_i, \dots, \alpha_g \cdot \beta_i) \in \bZ^g,
\]
and we have an isomorphism
\[
H_1(Y;\bZ) \cong \coker(M(H')) \cong \bZ^g / \gen{v_1,\dots,v_g}.
\]
Choose an arc $\gamma \subset S \minus \beta$ transverse to the $\alpha$ curves that begins at $z_1$ and ends at $z_2$, and form the vector $v(\gamma) = (\alpha_1 \cdot \gamma, \dots, \alpha_g \cdot \gamma) \in \bZ^g$.  Then under the above isomorphism, the class $[K]$ maps to the image of $v$ in $\bZ^g / \gen{v_1,\dots,v_g}$.

Now we return to the case at hand.  Choose a region of $H$ whose boundary contains arcs of both $\beta_1$ and $\beta_2$, and place a basepoint $z_0$ in it.  For $i=1,2$, traverse a parallel copy of $\beta_i'$ of $\beta_i$ based at $z_0$ and place a basepoint on $\beta_i'$ in each region it enters.  To each subarc $\gamma \subset \beta_i'$ that starts at $z_0$ and ends at one of these basepoints, associate the vector $v(\gamma)= (\alpha_1 \cdot \gamma, \alpha_2 \cdot \gamma)$.  The collection of these vectors gives a sequence $S_i$ beginning with $(0,0)$ and ending with $v_i = (\alpha_1 \cdot \beta_i, \alpha_2 \cdot \beta_i)$, in which each pair of consecutive terms differs by $(\pm 1, 0)$ or $(0, \pm 1)$.

We claim that $S_1 - S_2 = \{s_1 - s_2 \mid s_i \in S_i\}$ contains a vector in each equivalence class of $\bZ^2 / \langle v_1, v_2 \rangle$.  Assuming this, choose $x \in \bZ^2 / \gen{v_1, v_2} \cong H_1(Y;\bZ)$ and represent $-x$ by  $s_1 - s_2 \in S_1 - S_2$.  Choose $\gamma_i \subset \beta_i'$ with $v(\gamma_i) = s_i$ and write $\del v_i = z_i - z_0$, $i=1,2$.  Then the concatenation of $-\gamma_1$ and $\gamma_2$ is an arc $\gamma \subset S \minus \beta$ from $z_1$ to $z_2$ with $v(\gamma) = -s_1 + s_2$.  It follows that the knot represented by the doubly-pointed Heegaard diagram $(S,\alpha,\beta,z_1,z_2)$ represents the class $x$.  Since $x$ was arbitrary, the Theorem follows from the asserted claim.

Now we establish the claim.  Let $\Gamma_i \subset \bR^2$ denote the rectilinear curve from $(0,0)$ to $v_i$ that interpolates the sequence $S_i$.  Regard $\Gamma_i$ as the image of a map $F_i\co I \to \bR^2$.  It follows that $F_i$ is homotopic rel endpoints to the linear map from $(0,0)$ to $v_i$, and it descends to a based map $f_i\co S^1 \to \bR^2 / \gen{v_1, v_2}$.  Since $Y$ is a rational homology sphere, it follows that $v_1$ and $v_2$ are linearly independent, $\bR^2 / \gen{v_1, v_2} \cong T^2$, and $\{[f_1], [f_2]\}$ is a basis for $H_1(T^2)$.  It follows that the map $f\co S^1 \times S^1 \to T^2$, $f(x,y) = f_1(x)-f_2(y)$, induces an isomorphism on $H_1$.  In particular, $f$ has degree one, so it surjects.  On the other hand, the image of $f$ is the image of the set $\Gamma_1- \Gamma_2 = \{ \gamma_1 - \gamma_2 \mid \gamma_i \in \Gamma_i \}$ in $\bR^2 / \gen{v_1, v_2}$.  We conclude that $\Gamma_1 - \Gamma_2$ contains a vector in each equivalence class of $\bR^2/ \gen{v_1, v_2}$ and hence each class of $\bZ^2 / \gen{v_1, v_2}$.

To complete the argument, we show that $S_1 - S_2 = (\Gamma_1 - \Gamma_2) \cap \bZ^2$.  Thus, suppose that $\gamma_1 - \gamma_2 \in \bZ^2$, $\gamma_i \in \Gamma_i$.  Then $\gamma_i$ lies on the unit segment (vertical or horizontal) between two consecutive points in $S_i$.  If $\gamma_1$ lies on a horizontal segment, then its $y$-coordinate is integral.  Since $\gamma_1-\gamma_2 \in \bZ^2$, $\gamma_2$ has an integral $y$-coordinate, so it lies on a horizontal segment, too.  In this case, we take $s_i \in S_i$ to be the left endpoint of the horizontal segment containing $\gamma_i$, and $s_1 - s_2 = \gamma_1 - \gamma_2$.  Similarly, if the $\gamma_i$ both lie on vertical segments, then the top endpoints furnish points $s_i \in S_i$ such that $s_1 - s_2 = \gamma_1 - \gamma_2$.  In summary, we recover the claim asserting that $S_1 - S_2$ contains a vector in each equivalence class of $\bZ^2 / \gen{v_1, v_2}$, and the Theorem follows as described.
\end{proof}

\begin{remark}
With more effort, it appears possible to show that homologous simple knots with respect to a given genus-2 strong Heegaard diagram are in fact isotopic.  This is trivially the case with genus-1 strong diagrams.  We do not know whether there exist two strong Heegaard diagrams $H_1$, $H_2$ for a space, and knots $K_1$, $K_2$ that are simple with respect to $H_1$, $H_2$, respectively, such that $[K_1]=[K_2]$ but $K_1 \ne K_2$.
\end{remark}

Proposition \ref{p: simple} has an interesting application concerning non-orientable surfaces in $4$-manifolds. If $Y$ is a rational homology sphere with $H_2(Y;\Z/2) \ne 0$, then any class $x \in H_2(Y;\Z/2)$ can be represented by an closed, embedded, non-orientable surface of some genus.  The minimal genus of such a surface is a measure of the topological complexity of $Y$, first studied by Bredon and Wood \cite{BredonWood}. It is natural to ask whether the genus can be lowered by adding a dimension: i.e., whether the minimal genus of a surface in $Y$ representing $x$ is the same as the minimal genus of a surface in $Y \times I$ representing the corresponding homology class. For the strong L-spaces considered in Section \ref{sec: genus2}, the answer to this question is negative:

\begin{corollary} \label{c: nonorientable}
Let $Y$ be a strong L-space admitting a strong Heegaard diagram of genus $2$, and let $W$ be any smooth homology cobordism from $Y$ to itself (e.g. $W = Y \times I$). For any nonzero homology class $x \in H_2(W;\Z/2)$, the minimal genus of a smoothly embedded, closed, connected, non-orientable surface in $W$ representing $x$ is the same as the minimal genus of such a surface in $Y$ itself representing the corresponding element $\bar x \in H_2(Y;\Z/2)$. Furthermore, this minimal genus is equal to
\begin{equation} \label{eq: genusbound}
\max_{\spincs \in \Spin^c(Y)} 2 \left( d(Y,\spincs) - d(Y, \spincs + \PD(\beta(\bar x))) \right),
\end{equation}
where $\beta$ denotes the Bockstein homomorphism $H_2(Y;\Z/2) \xrightarrow{\beta} H_1(Y;\Z)$.
\end{corollary}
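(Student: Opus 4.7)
The plan is to sandwich the minimum non-orientable genus between a general Heegaard Floer lower bound, valid for any non-orientable surface in $W$, and an explicit construction in $Y$ itself that realizes this bound via a Floer simple knot produced by Proposition~\ref{p: simple}.

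For the lower bound, suppose $F \subset W$ is a smoothly embedded, closed, connected, non-orientable surface of genus $g$ representing $x$. Since $W$ is a homology cobordism from $Y$ to $Y$, inclusion of either boundary component induces isomorphisms on integral and mod $2$ homology compatible with the Bockstein, and the standard $d$-invariant obstruction for embedded non-orientable surfaces in rational homology cobordisms, developed in~\cite{LevineRubermanStrleNonorientable}, yields
\[
g \;\ge\; 2\bigl(d(Y,\spincs) - d(Y, \spincs + \PD(\beta(\bar x)))\bigr)
\]
for every $\spincs \in \Spin^c(Y)$. Maximizing over $\spincs$ bounds the minimal non-orientable genus in $W$ from below by the quantity in~\eqref{eq: genusbound}, and since any surface in $Y$ representing $\bar x$ pushes into $W$ and represents $x$, this also bounds the minimal non-orientable genus in $Y$ from below.

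For the upper bound, it suffices to build a non-orientable surface in $Y$ representing $\bar x$ whose genus exactly matches~\eqref{eq: genusbound}. Exactness of the Bockstein sequence forces $y := \beta(\bar x) \in H_1(Y;\Z)$ to be $2$-torsion, so Proposition~\ref{p: simple} produces a knot $K \subset Y$ that is simple with respect to a genus-$2$ strong Heegaard diagram of $Y$ (and therefore Floer simple) with $[K]=y$. Since $2[K]=0$, the boundary of a M\"obius-band neighborhood $\nu K$ is null-homologous and cobounds an orientable surface $\Sigma \subset Y \setminus \nu K$; capping $\Sigma$ together with the core M\"obius band yields a closed, connected, non-orientable surface $F \subset Y$ representing $\bar x$ whose non-orientable genus equals $1-\chi(\Sigma)$. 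Because $K$ is Floer simple, it minimizes rational genus in its homology class by~\cite{NiWuRational}, and for such knots the minimal value of $1-\chi(\Sigma)$ is computed via a formula in the $d$-invariants of $Y$; a routine identification matches this with the right-hand side of~\eqref{eq: genusbound}.

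Combining these bounds forces the quantity in~\eqref{eq: genusbound}, the minimal non-orientable genus in $W$, and the minimal non-orientable genus in $Y$ all to agree. The main obstacle is the matching step in the upper-bound paragraph: one must verify that the non-orientable surface obtained from the M\"obius-band construction on a Floer simple knot in the class $\beta(\bar x)$ realizes~\eqref{eq: genusbound} exactly, rather than merely providing some upper bound. This requires combining the rational-genus formula of Ni--Wu with the precise relationship between rational Seifert surfaces of $2$-torsion knots and non-orientable surfaces representing their Bockstein preimages, showing that the correction terms on the two sides agree.
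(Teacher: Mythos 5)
Your proposal follows essentially the same route as the paper: the $d$-invariant obstruction of \cite{LevineRubermanStrleNonorientable} gives the lower bound \eqref{eq: genusbound} for surfaces in $W$ (hence in $Y$, since surfaces in $Y$ push into $W$), and the bound is realized by a Floer simple knot in the class $\beta(\bar x)$, whose existence is supplied by Proposition~\ref{p: simple}. The ``matching step'' you flag as the main obstacle does not need to be redone: the sharpness of the bound when $Y$ is an L-space and $\beta(\bar x)$ is represented by a Floer simple knot is precisely what the combination of \cite{NiWuRational} and \cite{LevineRubermanStrleNonorientable} establishes, and the paper's proof simply cites this rather than reconstructing the surface by hand.
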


Here $\Spin^c(Y)$ is the set of spin$^c$ structures on $Y$
and $d(Y,\spincs)$ denotes the \emph{correction term} of the spin$^c$ structure $\spincs$, a rational number derived from the Heegaard Floer homology of $Y$ \cite{OSzAbsolute}. When $Y$ is an L-space, $d(Y,\spincs)$ equals the absolute rational grading of the summand $\HF(Y,\spincs) \subset \HF(Y)$. Additionally, the correction terms of a strong L-space can be readily computed (up to an overall shift) from a strong Heegaard diagram $H$ using a formula of Lee and Lipshitz \cite{LeeLipshitz} for the grading difference between two generators of $\CF(H)$.  This information is enough to determine the quantity \eqref{eq: genusbound}.

\begin{proof}[Proof of Corollary \ref{c: nonorientable}]
Work of Ni and Wu \cite{NiWuRational} and of Ruberman, Strle, and the second author \cite{LevineRubermanStrleNonorientable} shows that \eqref{eq: genusbound} is a lower bound on the minimal genus in both $Y$ and $W$ (for any rational homology sphere $Y$), and that both of these bounds are sharp when $Y$ is an L-space and $\beta(\bar x)$ is represented by a Floer simple knot.
\end{proof}

\begin{question} \label{q: simpleknot}
If $Y$ is a strong L-space, is every homology class of order 2 represented by a simple knot with respect to some strong Heegaard diagram for $Y$?
\end{question}

An affirmative answer would imply that the conclusion of Corollary \ref{c: nonorientable} applies to every strong L-space.

\section{Waves, anti-waves, and weak reducibility}
\label{sec: waves}

\subsection{Waves}

A {\em wave} in a Heegaard diagram $H$ is an arc $\gamma$ that is properly embedded in a region $R$ of $H$ and whose endpoints lie on the interiors of distinct arcs of $\del R$ and on the same $\alpha$ or $\beta$ curve, such that the local signs of intersection at the two endpoints of $\gamma$ are opposite. Waves were introduced by Volodin, Kuznetsov, and Fomenko \cite{VolodinKuznetsovFomenko}, who used them to formulate an algorithm that they conjectured would detect whether or not a given Heegaard diagram presents $S^3$. Their key observation was the following \cite[Theorem 4.3.1]{VolodinKuznetsovFomenko}:


\begin{figure}
\labellist
 \pinlabel {{\color{red} $\alpha_i$}} [br] at 40 228
 \pinlabel {{\color{red} $\alpha_i$}} [tr] at 40 36
 \pinlabel {{\color{purple} $\alpha_i^{1}$}} [Br] at 91 170
 \pinlabel {{\color{green} $\alpha_i^{2}$}} [Bl] at 101 170
 \pinlabel $\gamma$ [tr] at 95 18
 \pinlabel {{\color{red} $\alpha_i$}} [tr] at 348 228
 \pinlabel {{\color{red} $\alpha_i$}} [tr] at 348 36
 \pinlabel {{\color{purple} $\alpha_i^{1}$}} [Br] at 399 170
 \pinlabel {{\color{green} $\alpha_i^{2}$}} [Bl] at 409 170
 \pinlabel $\gamma$ [tr] at 403 18
\endlabellist
\includegraphics[width=5in]{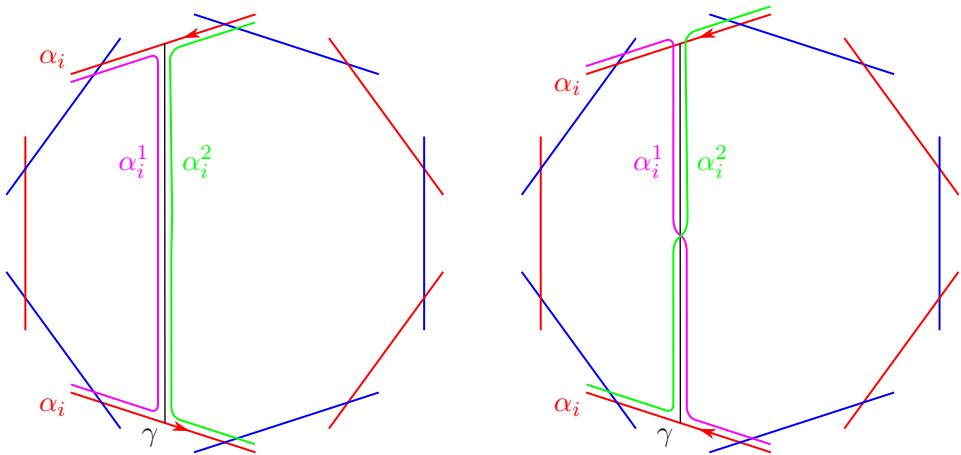}
\caption{A wave (left) and an anti-wave (right), along with the curves $\alpha_i^{1}$ and $\alpha_i^{2}$, perturbed to be transverse to $\alpha_i$. For the wave, the three curves $\alpha_i$, $\alpha_i^{1}$, $\alpha_i^{2}$ are pairwise disjoint and cobound a pair of pants; for the antiwave, each pair of curves meets in a point.}
\end{figure}

\begin{lemma}\label{l: wave swap}
Suppose that $\gamma$ is a wave in a Heegaard diagram $H = (S,\alpha,\beta)$ with endpoints on the curve $\alpha_i$. Let $a_1$ and $a_2$ denote the two arcs into which $\del \gamma$ splits $\alpha_i$, and set $\alpha_i^{1} = a_1 \cup \gamma$ and $\alpha_i^{2} = a_2 \cup \gamma$.\footnote{Observe that the curves $\alpha_i^{1}$ and $\alpha_i^{2}$ can be isotoped slightly to cobound a pair of pants in $S$ with $\alpha_i$.} Then it is possible to replace $\alpha_i$ with exactly one of $\alpha_i^{1}$ and $\alpha_i^{2}$ to obtain a Heegaard diagram $H' = (S, \alpha', \beta)$ which presents the same manifold as $H$. An analogous statement holds with the roles of $\alpha$ and $\beta$ exchanged. \qed
\end{lemma}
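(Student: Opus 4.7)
The plan is to interpret the wave surgery as a modification of the cut system for the $\alpha$-handlebody $H_\alpha$, and to show that replacing the compressing disk bounded by $\alpha_i$ by one bounded by $\alpha_i^{k}$, for an appropriate choice $k \in \{1,2\}$, yields a new cut system for $H_\alpha$ without altering the handlebody or the rest of the diagram. Since the $\beta$-curves will be unchanged and the $\alpha$-handlebody will only have its cut system replaced, the resulting diagram will automatically present the same $3$-manifold.

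First I would verify the claim in the footnote: the opposite-sign condition at the two endpoints of $\gamma$ is what permits each of $a_1 \cup \gamma$ and $a_2 \cup \gamma$ to be smoothed at the two corners into an embedded simple closed curve (rather than an immersed curve with a self-crossing, as would occur for an anti-wave). After a slight further perturbation, the three curves $\alpha_i, \alpha_i^{1}, \alpha_i^{2}$ are pairwise disjoint and cobound a pair of pants $P \subset S$.

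Next, let $D_i \subset H_\alpha$ be the compressing disk with $\partial D_i = \alpha_i$, and push $\gamma$ slightly into the interior of $H_\alpha$ to obtain an arc $\tilde\gamma$ with endpoints on $\alpha_i \subset \partial H_\alpha$. Because $\gamma$ lies in a region of the Heegaard diagram, it is disjoint from the attaching curves $\alpha_j$ for $j \neq i$, so the push-in may be chosen to avoid the other compressing disks $D_j$ (by routing it through the $3$-ball $H_\alpha \setminus \bigsqcup_j D_j$ if necessary). Let $N \subset H_\alpha$ be a regular neighborhood of $D_i \cup \tilde\gamma$, disjoint from $D_j$ for $j \neq i$. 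The key claim, which I would establish by direct model analysis, is that $N$ is a solid torus whose boundary torus contains the pair of pants $P$, and that $\alpha_i, \alpha_i^{1}, \alpha_i^{2}$ are three pairwise isotopic meridian curves on $\partial N$. In particular, $\alpha_i^{1}$ and $\alpha_i^{2}$ each bound compressing disks $D_i^{1}, D_i^{2} \subset N$ in $H_\alpha$, all disjoint from the other $D_j$.

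To finish, I would observe that at most one of $\alpha_i^{1}, \alpha_i^{2}$ can bound a disk on $S$: if both did, gluing those disks to $P$ would produce a disk on $S$ bounded by $\alpha_i$, contradicting essentiality of the attaching curve. Choose $k \in \{1,2\}$ so that $\alpha_i^{k}$ is essential on $S$; then $\{\alpha_j\}_{j \ne i} \cup \{\alpha_i^{k}\}$ is a complete system of attaching curves for $H_\alpha$, because cutting $H_\alpha$ along $\{D_j\}_{j \ne i} \cup \{D_i^{k}\}$ is homeomorphic to cutting along $\{D_j\}_{j \ne i} \cup \{D_i\}$ (the modification takes place inside the solid torus $N$), and hence yields a $3$-ball. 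The resulting $H'$ is a Heegaard diagram for the same $3$-manifold. The analogous argument with the roles of $\alpha$ and $\beta$ exchanged is identical.

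I expect the main obstacle to be establishing that $N$ is a solid torus with $\alpha_i, \alpha_i^{1}, \alpha_i^{2}$ as parallel meridians on its boundary. This is where the wave condition is used essentially: for an anti-wave, the corresponding regular neighborhood would be a genus-$2$ handlebody, and the curves $\alpha_i^{1}, \alpha_i^{2}$ would not in general be meridians, so the construction would fail to produce a valid Heegaard diagram.
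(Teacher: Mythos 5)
Your high-level strategy (keep the $\beta$-side fixed and show that the replacement curve is part of a complete meridian system for the same handlebody $H_\alpha$) is the right one --- and for what it is worth, the paper does not prove this lemma itself but quotes it from Volodin--Kuznetsov--Fomenko --- but your key claim about $N$ is false. A regular neighborhood $N$ of $D_i \cup \tilde\gamma$ is indeed a solid torus, yet $\alpha_i^{1}$ and $\alpha_i^{2}$ are not meridians of it and bound no disks in $N$: each of them traverses the $1$-handle around $\tilde\gamma$ exactly once, so each represents a generator of $H_1(N) \cong \Z$ (it is isotopic to the core of $N$), whereas any curve bounding a disk in $N$ must be nullhomologous there. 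In fact the roles are the reverse of what you assert: since both feet of the handle attach on one side of $D_i$, the disk $D_i$ is separating, hence boundary-parallel in $N$, so on the torus $\partial N$ it is $\alpha_i$ that is inessential while $\alpha_i^{1}$ and $\alpha_i^{2}$ are essential and mutually isotopic. This is the real content of the lemma: the compressing disk bounded by the good $\alpha_i^{k}$ is a global disk in $H_\alpha$, never visible in a neighborhood of $D_i \cup \gamma$. Your closing dichotomy is also not the right one: showing that at most one of $\alpha_i^{1},\alpha_i^{2}$ bounds a disk on $S$ does not select the correct curve, because both can be essential in $S$ while one of them is homologous to a combination of the $\alpha_j$, $j\ne i$, and therefore gives a degenerate attaching system; and the assertion that cutting along $\{D_j\}_{j\ne i}\cup\{D_i^{k}\}$ yields a ball ``because the modification happens inside $N$'' is not a proof even granting the existence of $D_i^{k}$ (a boundary-parallel disk also lies inside $N$, and cutting along it certainly does not give a ball).

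The standard repair is to cut globally rather than locally. Cut $H_\alpha$ along the disks $D_j$, $j\ne i$; the result is a solid torus $V$ whose boundary torus contains both $\alpha_i$ and $\gamma$ intact (here you use that $\gamma$ lies in a single region of $H$, hence misses every $\alpha_j$). Cutting $\partial V$ along $\alpha_i$ gives an annulus in which $\gamma$ is an embedded arc with both endpoints on the same boundary circle --- this is precisely where the opposite-sign (same-side) condition enters --- and such an arc is boundary-parallel. Hence exactly one of $\alpha_i^{1},\alpha_i^{2}$ is isotopic to $\alpha_i$ in $\partial V$, so it bounds a meridian disk of $V$ disjoint from the $D_j$, and replacing $D_i$ by that disk gives a complete disk system for the same handlebody; since the $\beta$-side is untouched, the new diagram presents the same manifold (this last reduction is the part of your write-up that is correct). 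The other curve is inessential in $\partial V$, hence its class lies in the span of the $[\alpha_j]$, $j\ne i$, in $H_1(S)$, so it cannot be used --- which is exactly the ``exactly one'' in the statement.
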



Given an arbitrary Heegaard diagram $H$, iterating the {\em wave move} described in Lemma \ref{l: wave swap} yields a Heegaard diagram for $Y$ containing no waves. Volodin, Kuznetsov, and Fomenko conjectured that the only waveless diagram for $S^3$ in any given genus $g$ is the standard one.  This conjecture was subsequently shown to be true for Heegaard diagrams of genus $2$ \cite{HommaOchiaiTakahashi} but false for diagrams of higher genus \cite{MorikawaCounterexample, OchiaiCounterexample, ViroKopelski}.

\begin{proposition}
\label{p: wave}
A 1-extendible, strong Heegaard diagram $H$ for a strong L-space $Y$ does not contain a wave.
\end{proposition}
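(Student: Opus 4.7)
The plan is to argue by contradiction: suppose $H$ contains a wave $\gamma$, perform the wave swap of Lemma \ref{l: wave swap} to produce a second Heegaard diagram $H'$ for $Y$, and show that $|\SS(H')| < |\SS(H)|$, which would contradict $|\SS(H')| \ge \det(Y) = |\SS(H)|$. By symmetry I take the wave to lie in some region $R$ with both endpoints $p, q$ on $\alpha_i$. Let $a_1, a_2$ be the two arcs of $\alpha_i$ cut off by $\{p, q\}$, and form $\alpha_i^{1}, \alpha_i^{2}$ as in Lemma \ref{l: wave swap}; after possibly relabeling, $\alpha_i^{1}$ is the curve that produces a valid Heegaard diagram $H' = (S, \alpha', \beta)$ for $Y$.

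\textbf{Comparing intersection graphs.} Next, I would show that $G(H')$ is a subgraph of $G(H)$. Since $\gamma$ is properly embedded in $R$, its interior is disjoint from $\alpha \cup \beta$; and since $p, q$ lie in the interiors of arcs of $\partial R$, they are not corners of $R$ and hence not in $\alpha \cap \beta$. After smoothing the two corners of $\alpha_i^{1}$, its intersections with $\beta$ coincide precisely with those of the interior of $a_1$ with $\beta$. Because the remaining curves of $H'$ agree with those of $H$, the graph $G(H')$ is obtained from $G(H)$ by deleting only the edges at $a_i$ that correspond to points of (the interior of $a_2$) $\cap\, \beta$; in particular $G(H') \subseteq G(H)$.

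\textbf{The key geometric claim.} The hard step is to show that (the interior of $a_2$) $\cap\, \beta \neq \emptyset$, i.e.~that at least one edge has actually been deleted. For this, $p$ lies in the interior of some arc $A$ of $\partial R$ that is a sub-arc of $\alpha_i$, bounded by two corners $c_1, c_2$ of $R$, each of which is a point of $\alpha_i \cap \beta$. Travelling along $\alpha_i$ away from $p$ in the direction of $a_2$, one immediately exits $A$ through one of these corners $c$. Since $q$ is itself in the interior of an arc of $\partial R$, $q \notin \alpha \cap \beta$, so $c \neq q$ and $c$ lies in the interior of $a_2$. This is the only place where any genuine care is needed; the rest of the proof is formal bookkeeping.

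\textbf{Concluding the contradiction.} Finally, pick any $x \in $ (interior of $a_2$) $\cap\, \beta_l$; the corresponding edge $e_x$ belongs to $G(H)$ but not $G(H')$. By 1-extendibility of $H$, this edge extends to some perfect matching of $G(H)$, which cannot be a matching of the subgraph $G(H')$. Since every matching of $G(H')$ is automatically a matching of $G(H)$, we conclude $|\SS(H')| < |\SS(H)| = \det(Y)$. But $M(H')$ is a square presentation matrix for the finite group $H_1(Y;\bZ)$, so $|\det M(H')| = \det(Y)$, and the permutation expansion $\det M(H') = \sum_{\x \in \SS(H')} \eta(\x)$ gives $|\SS(H')| \ge |\det M(H')| = \det(Y)$, contradicting the strict inequality above.
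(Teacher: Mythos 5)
Your argument is correct and is essentially the paper's proof: perform the wave swap of Lemma \ref{l: wave swap}, observe that the discarded arc of $\alpha_i$ still meets $\beta$ (via the corner adjacent to the wave endpoint), and use 1-extendibility to conclude that $H'$ has strictly fewer generators than $H$, contradicting $\abs{\SS(H')} \ge \det(Y) = \abs{\SS(H)}$. The paper packages the count as a nontrivial splitting $\SS(H) = \SS_1 \sqcup \SS_2$ and uses that $H'$ is again strong, whereas you phrase it via perfect matchings of the subgraph $G(H') \subseteq G(H)$ and need only the general bound $\abs{\SS(H')} \ge \det(Y)$; the content is the same.
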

\begin{proof}
Without loss of generality, suppose to the contrary that $H$ contains a wave $\gamma$ with endpoints on the curve $\alpha_i$. Consider the curves $\alpha_i^{1}$, $\alpha_i^{2}$ defined in Lemma \ref{l: wave swap}. Observe that $\alpha_i \cap \beta$ is the disjoint union of the nonempty sets $\alpha_i^{1} \cap \beta$ and $\alpha_i^{2} \cap \beta$. Since $H$ is $1$-extendible, $\SS(H)$ is a nontrivial disjoint union $\SS_1 \sqcup \SS_2$, where $\SS_j$ consists of the generators that use a point in $\alpha_i^{j}$.  Let $H'$ denote the Heegaard diagram for $Y$ guaranteed by Lemma \ref{l: wave swap} with, say, $\alpha'_i = \alpha_i^{j}$. Note that $H'$ is strong, and $\SS(H') = \SS_j$. Then
\[
\det(Y) = \abs{\SS(H')}  = \abs{\SS_j} < \abs{\SS(H)} = \det(Y),
\]
a contradiction.
\end{proof}

\subsection{Anti-waves and formal L-spaces}

The set $\cS$ of strong L-spaces constitutes a class of 3-manifolds for which it is easy to certify the property that they are L-spaces.  The set of formal L-spaces constitutes another such class.  This family is well-known to experts but has not previously appeared in the literature.  To define it, recall that a {\em triad} is a triple of closed, oriented 3-manifolds that are obtained by Dehn filling a compact manifold with torus boundary along a triple of curves at pairwise distance (i.e., geometric intersection number) one.  A simple and useful way to recognize a triad is from a triple of Heegaard diagrams $H_i=(S,\alpha_0 \cup \alpha_i,\beta)$, $i=1,2,3$, where $\alpha_1, \alpha_2, \alpha_3$ are three curves on $S$ at pairwise distance one.  If $Y_i$ denotes the manifold presented by $H_i$, then $(Y_1,Y_2,Y_3)$ forms a triad.  In general, if $(Y_1,Y_2,Y_3)$ denotes a triad of rational homology spheres, then an elementary calculation in homology shows that they may be permuted so that $\det(Y_1) + \det(Y_2) = \det(Y_3)$.
\begin{definition}
\label{def: formal}
The set of {\em formal L-spaces} $\cF$ is the smallest set of manifolds such that
\begin{itemize}
\item
$S^3 \in \cF$, and
\item
if $(Y_1,Y_2,Y_3)$ is a triad with $Y_1,Y_2 \in \cF$ and $\det(Y_1)+\det(Y_2) = \det(Y_3)$, then $Y_3 \in \cF$.
\end{itemize}
\end{definition}
An elementary application of the surgery triangle in Heegaard Floer homology shows that every manifold in $\cF$ is an L-space.  Compare Definition \ref{def: formal} to the definition of a quasi-alternating link \cite[Section 2]{OSzDouble}.  In particular, the branched double cover of a quasi-alternating link is a formal L-space. Both $\cS$ and $\cF$ are proper subsets of the set of all L-spaces, as the Poincar\'e homology sphere is in the complement of both.

\begin{proposition} \label{p: formal-strong}
There exists a formal L-space that is not a strong L-space; thus, $\cF \not\subset \cS$.
\end{proposition}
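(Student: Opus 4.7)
The plan is to exhibit a concrete element of $\cF \setminus \cS$. The branched double cover of any quasi-alternating link in $S^3$ lies in $\cF$, as noted in the paragraph preceding the proposition---this follows from the inductive definition of QA together with the surgery exact triangle in Heegaard Floer homology. On the other hand, Theorem \ref{t: le8} provides a sharp obstruction to being strong: any strong L-space with determinant at most $8$ is a connected sum of lens spaces. So it suffices to locate a quasi-alternating link $L \subset S^3$ with $\det(L) \le 8$ whose branched double cover $\Sigma(L)$ is not a connected sum of lens spaces; then $\Sigma(L) \in \cF \setminus \cS$.

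To produce such an $L$, I would search among non-alternating Montesinos or pretzel links of mixed sign. Quasi-alternating-ness is verified by the standard inductive crossing-resolution argument---exhibit a crossing whose $0$- and $\infty$-resolutions yield quasi-alternating links whose determinants add to that of $L$---and the explicit criteria of Champanerkar--Kofman for Montesinos links streamline this check. Given a candidate $L$, the branched double cover $\Sigma(L)$ is a Seifert fibered space whose invariants are read off directly from the rational tangle parameters of $L$; verifying that $\Sigma(L)$ has base $S^2$, at least three exceptional fibers, and irreducible JSJ decomposition rules out its being a connected sum of lens spaces.

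The main obstacle is locating an $L$ that meets both constraints simultaneously: the smallest non-alternating quasi-alternating knot in the standard tables has determinant slightly above the bound of $8$, so the search is likely pushed to multi-component links, or to less commonly tabulated families, in order to realize a sufficiently small determinant. Once $L$ is in hand, the remaining verifications---quasi-alternating-ness of $L$ and the identification of $\Sigma(L)$ as a non-lens Seifert fibered space---are routine computations, and the conclusion follows from Theorem \ref{t: le8}.
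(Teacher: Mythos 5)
Your reduction is exactly the one the paper uses: by Theorem \ref{t: le8}, any quasi-alternating link $L$ with $\det(L) \le 8$ whose branched double cover is not a connected sum of lens spaces gives an element of $\cF \setminus \cS$. But the entire content of the proposition is the existence of such an example, and your write-up stops short of producing one: you describe a search among mixed-sign pretzel and Montesinos links, correctly observe that no non-alternating quasi-alternating \emph{knot} in the tables has determinant $\le 8$, and then defer the decisive step (``once $L$ is in hand\dots'') without ever putting an $L$ in hand. As it stands this is a proof strategy, not a proof; the existence claim remains unestablished.

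The gap is filled by a single explicit link, and it is the one the paper takes: the pretzel link $L = P(2,2,-3)$. It is a non-alternating, quasi-alternating link with $\det(L) = 8$, so $\Sigma(L) \in \cF$ by the remark preceding the proposition; and $\Sigma(L)$ is the small Seifert fibered space $\Sigma(2,2,-3)$, which is irreducible and not a lens space, hence not a connected sum of lens spaces. Theorem \ref{t: le8} then shows $\Sigma(L) \notin \cS$. So your instinct to look at two-component pretzel links of mixed sign was right; the argument is complete only once you name $P(2,2,-3)$ (or a comparable example), verify its quasi-alternating-ness by the inductive resolution at one of the crossings in the $-3$ band (or by citing the known classification for pretzel links), compute $\det = 8$, and identify the double cover as a non-lens Seifert fibered space.
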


%

\begin{proof}
The pretzel link $L = P(2,2,-3)$ is a non-alternating, quasi-alternating link with determinant $8$.  Its branched double cover $Y=\Sigma(L)$ is the small Seifert fibered space $\Sigma(2,2,-3)$, and it belongs to $\cF$ as remarked above.  Since $\det(Y)=8$ and $Y$ is not a connected sum of lens spaces, Theorem \ref{t: le8} shows that $Y$ is not a strong L-space. Thus, $Y \in \cF - \cS$.
\end{proof}

\begin{question} \label{q: strong-formal}
Is every strong L-space a formal L-space? That is, do we have $\cS \subset \cF$?
\end{question}

Note that an affirmative answer to Question \ref{q: alternating} would imply an affirmative answer to Question \ref{q: strong-formal}. To approach Question \ref{q: strong-formal}, we introduce the notion of an \emph{anti-wave} in a Heegaard diagram $H$.  This is an arc $\gamma$ that is properly embedded in a region $R$ of $H$ and whose endpoints lie on the interiors of distinct arcs of $\del R$ and on the same $\alpha$ or $\beta$ curve, such that the local signs of intersection at the two endpoints of $\gamma$ are the same. The relevance of anti-waves is given by the following:

\begin{proposition}
\label{p: anti-wave}
If a 1-extendible strong Heegaard diagram $H$ for a strong L-space $Y$ contains an anti-wave, then $Y$ fits into a triad with strong L-spaces $Y_1$ and $Y_2$ with $\det(Y) = \det(Y_1) + \det(Y_2)$.
\end{proposition}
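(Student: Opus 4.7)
The plan is to adapt Lemma \ref{l: wave swap} to the anti-wave setting. I would first form the curves $\alpha_i^1 = a_1 \cup \gamma$ and $\alpha_i^2 = a_2 \cup \gamma$, where $a_1, a_2$ are the two sub-arcs of $\alpha_i$ cut off by $\partial\gamma$. The figure preceding Lemma \ref{l: wave swap} records the key local distinction: since $\gamma$ is an anti-wave, each pair among the three curves $\alpha_i, \alpha_i^1, \alpha_i^2$ meets in exactly one point on $S$ (as opposed to being pairwise disjoint in the wave case). Setting $\alpha_0 = \alpha \setminus \alpha_i$ and $H_j = (S, \alpha_0 \cup \alpha_i^j, \beta)$ for $j = 1, 2$, the triad-recognition criterion stated just before Definition \ref{def: formal} applies, so if $Y_j$ denotes the $3$-manifold presented by $H_j$, the triple $(Y, Y_1, Y_2)$ is a triad.

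Next I will show that each $H_j$ is itself a strong Heegaard diagram. Because $\gamma$ is properly embedded in a region of $H$, it is disjoint from $\beta$, so $\alpha_i^j \cap \beta = a_j \cap \beta$ with signs of intersection inherited from $\alpha_i$. Hence $G(H_j)$ is obtained from $G(H)$ by retaining only the edges at the vertex $a_i$ corresponding to points of $a_j \cap \beta$. This yields a sign-preserving bijection between $\SS(H_j)$ and the subset $\SS_j \subset \SS(H)$ of generators that use a point of $a_j$. Since $H$ is strong, all of $\SS(H)$ shares a common sign, and therefore so does $\SS_j$, confirming that $H_j$ is strong.

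Finally, both $\SS_j$ are non-empty: the endpoints $p, q$ of $\gamma$ lie on distinct arcs of $\partial R \cap \alpha_i$, so each of $a_1, a_2$ contains at least one point of $\alpha_i \cap \beta$, and by $1$-extendibility every such point participates in some generator. Consequently $\det(Y_j) = \abs{\SS(H_j)} = \abs{\SS_j} > 0$, so $Y_j$ is a strong L-space, and $\det(Y_1) + \det(Y_2) = \abs{\SS_1} + \abs{\SS_2} = \abs{\SS(H)} = \det(Y)$. The principal subtlety is the initial triad step, specifically confirming from the local picture near the anti-wave that the three curves are genuinely at pairwise distance one on $S$; the remaining strong-diagram argument is a close variant of the proof of Proposition \ref{p: wave}, which already partitions $\SS(H)$ according to which sub-arc of $\alpha_i$ is used.
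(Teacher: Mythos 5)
Your proposal follows essentially the same route as the paper's proof: split $\alpha_i$ along the anti-wave into $\alpha_i^{1},\alpha_i^{2}$ (each pair of the three curves meeting in one point after perturbation, which gives the triad), identify $\alpha_i \cap \beta$ with $(\alpha_i^{1}\cap\beta)\sqcup(\alpha_i^{2}\cap\beta)$ to get a sign-preserving decomposition $\SS(H)=\SS_1\sqcup\SS_2$, use $1$-extendibility together with the fact that each sub-arc $a_j$ meets $\beta$ to see that each $\SS_j\neq\emptyset$, and add up determinants. The one step the paper checks that you pass over is that each $H_j$ is in fact a valid Heegaard diagram, i.e.\ that $\alpha_i^{j}$ together with the remaining $\alpha$ curves is linearly independent in $H_1(S;\Z)$; this follows at once from what you already established, since all generators of $\SS(H_j)\cong\SS_j$ carry the same sign and $\SS_j\neq\emptyset$, whence $\abs{\det M(H_j)}=\abs{\SS_j}\neq 0$.
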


Thus, one might try to answer Question \ref{q: strong-formal} by showing that every strong L-space admits a strong, 1-extendible Heegaard diagram that contains an anti-wave.

\begin{proof}[Proof of Proposition \ref{p: anti-wave}]
Without loss of generality, assume that $H$ contains an anti-wave $\gamma$ with endpoints on $\alpha_i$. As in the case of a wave, let $a_1$, $a_2$ denote the two arcs of $\alpha_i \minus \partial \gamma$, and let $\alpha_i^{j} = a_j \cup \gamma$ for $j=1,2$, with orientation induced from that of $\alpha_i$. We may perturb $\alpha_i^{1}$ and $\alpha_i^{2}$ so that they meet $\alpha_i$ and each other transversally, with each pair of curves meeting in a single point.
Just as in the proof of Proposition \ref{p: wave}, there is a natural identification of $\alpha_i \cap \beta$ with the disjoint union $(\alpha_i^{1} \cap \beta) \sqcup (\alpha_i^{2} \cap \beta)$, leading to a nontrivial decomposition
$\SS(H) = \SS_1 \sqcup \SS_2$.

For $j=1,2$, let $H_j=(S,\alpha^{j}, \beta)$ be the diagram obtained by replacing $\alpha_i$ with the perturbed copy of $\alpha_i^{j}$. Note that $\SS(H_j)$ is identified with $\SS_j$, preserving all signs of intersection. We must verify that $H_j$ is a Heegaard diagram, i.e., that the curves in $\alpha^{j}$ are linearly independent in $H_1(S;\Z)$. Since all points in $\SS(H)$ have the same sign, the intersection matrix $M(H_j)$ has
\[
\abs{\det(M(H_j))} = \abs{\SS(H_j)} = \abs{\SS_j} \ne 0.
\]
Therefore, the curves in $\alpha^{j}$ must be linearly independent, as required. Let $Y_j$ be the $3$-manifold presented by $H_j$; then $Y_j$ is a strong L-space. By construction, $(Y_1,Y_2,Y)$ forms a triad.  Moreover,
\[
\det(Y) = \abs{\SS(H)} = \abs{\SS_1} + \abs{\SS_2} = \abs{\SS(H_1)} + \abs{\SS(H_2)} = \det(Y_1) + \det(Y_2),
\]
as required.
\end{proof}


\subsection{Weak reducibility}
\label{subsec: reducibility}

A Heegaard splitting is called {\em weakly reducible} if there exist disjoint compressing disks in the two handlebodies into which the Heegaard surface decomposes the 3-manifold. This notion was introduced by Casson and Gordon, who showed that given a weakly reducible Heegaard splitting of a manifold, either the splitting is reducible or else the manifold contains an incompressible surface of positive genus \cite[Theorem 3]{cg:heegaard}.  The following result is elementary.

\begin{proposition} \label{p: weakly reducible}
A strong Heegaard diagram of genus $g \ge 3$ describes a weakly reducible Heegaard splitting.
\end{proposition}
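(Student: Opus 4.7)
The plan is to locate a pair of disjoint attaching curves $\alpha_i, \beta_j$; they then bound disjoint compressing disks in the two handlebodies, witnessing weak reducibility.

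First, by Proposition \ref{p: 1-extendible} I replace $H$ with a strong, $1$-extendible Heegaard diagram $H'$ for the same Heegaard splitting (handleslides and isotopies preserve the splitting). By Lemma \ref{l: all coherent}, $H'$ is coherent, so $|m_{ij}| = |\alpha_i \cap \beta_j|$ for every $i,j$. Thus a zero entry of the intersection matrix $M(H')$ corresponds precisely to a geometrically disjoint pair of attaching curves, and the problem reduces to producing a zero entry of $M(H')$ when $g \ge 3$.

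By Proposition \ref{p: polya}, $M(H')$ is a P\'olya matrix. Suppose toward a contradiction that every $m_{ij}$ is nonzero. Then the common sign $\epsilon = \sgn(\sigma)\prod_{i} \sgn(m_{i\sigma(i)})$ must be independent of the permutation $\sigma$, since every term in the permutation expansion of $\det M(H')$ shares the same sign. Writing $S = (\sgn m_{ij}) \in \{\pm 1\}^{g\times g}$, this says $\det(S) = \epsilon \cdot g!$, so $|\det(S)| = g!$.

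The main obstacle is ruling this out when $g \ge 3$. The cleanest self-contained route is Hadamard's inequality: each row of $S$ has Euclidean norm $\sqrt g$, so $|\det(S)| \le g^{g/2}$. An elementary estimate gives $g! > g^{g/2}$ for $g \ge 3$: indeed, $(g!)^2 = \prod_{k=1}^g k(g+1-k)$, where each factor is at least $g$ and the factor at $k=2$ equals $2(g-1) > g$ for $g \ge 3$, so $(g!)^2 > g^g$. This contradicts $|\det(S)| = g!$. (Equivalently, this is P\'olya's 1913 theorem that no $\pm 1$ sign matrix of order $\ge 3$ is P\'olya.) Hence some entry of $M(H')$ vanishes, producing the required disjoint pair of attaching curves.
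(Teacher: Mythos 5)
Your proof is correct, and its overall skeleton matches the paper's: pass to a strong, $1$-extendible diagram via Proposition \ref{p: 1-extendible}, use coherence (Lemma \ref{l: all coherent}) so that a zero entry of the P\'olya matrix $M(H')$ gives a genuinely disjoint pair $\alpha_i, \beta_j$, and hence disjoint compressing disks. Where you diverge is in the combinatorial core, namely the proof that a $g\times g$ P\'olya matrix with $g \ge 3$ must have a zero entry. You argue globally: if every entry were nonzero, all $g!$ terms of the permutation expansion would be nonzero and of one sign, forcing the $\pm 1$ sign matrix $S$ to satisfy $\abs{\det S} = g!$, which contradicts Hadamard's bound $\abs{\det S} \le g^{g/2}$ once $g \ge 3$ (your elementary estimate $(g!)^2 = \prod_k k(g+1-k) > g^g$ is fine). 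The paper instead argues locally: it inspects the top-left $3\times 3$ minor and the product of the remaining diagonal entries, and a direct sign comparison of the three positive and three negative terms of the $3\times 3$ determinant (whose products of entries coincide) forces a zero entry. The paper's argument is more elementary in that it needs no norm inequality and only ever looks at a $3\times 3$ block, while yours is the classical global statement (P\'olya's theorem that no order-$\ge 3$ all-nonzero sign matrix is sign-determined) and is equally valid; both deliver exactly the zero entry needed for weak reducibility.
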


\begin{proof}
Given a strong Heegaard diagram, convert it into a 1-extendible strong Heegaard diagram $H$ for the same Heegaard splitting by Proposition \ref{p: 1-extendible}.  Then $M=M(H)$ is a P\'olya matrix and $H$ is coherent.  The proposition then follows from the assertion that a $g \times g$ P\'olya matrix $M$ contains a zero entry for $g \ge 3$, which we now establish.  Let
\[
N = \begin{pmatrix}
b&o&t \\
c&u&d \\
r&a&g
\end{pmatrix}
\]
denote the top-left $3 \times 3$ minor of $M$ and $m$ the product of its remaining diagonal entries.  Since $M$ is a P\'olya matrix, either $m=0$ or else all nonzero terms in the expansion
\[
\det(N) = bug + cat + rod - bad - cog - rut
\]
have the same sign, since each of these terms' products with $m$ contributes with the same sign to $\det(M)$. It follows that $0 \le dogcartbum \le 0$, so $M$ has at least one $0$ entry.
\end{proof}

In fact, for a strong Heegaard diagram of genus greater than $3$, \cite[Corollary 7.8]{rst:polya} immediately implies a much stronger result:

\begin{proposition} \label{p: atmost3}
A strong, 1-extendible Heegaard diagram contains an $\alpha$ curve that meets at most three $\beta$ curves, and vice versa. \qed
\end{proposition}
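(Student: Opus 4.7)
The plan is to derive the proposition from the McCuaig--Robertson--Seymour--Thomas classification of P\'olya matrices, specifically Corollary 7.8 of \cite{rst:polya}, which guarantees that every P\'olya matrix has a row (and, by transposing, a column) with at most three nonzero entries. Almost all of the work has already been assembled in Section \ref{sec: preliminaries}; the remaining task is simply to translate between the combinatorics of the intersection matrix and the geometry of the diagram.

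Concretely, I would begin with a strong, $1$-extendible Heegaard diagram $H = (S, \alpha, \beta)$ and invoke Lemma \ref{l: all coherent} to conclude that $H$ is coherent. Coherence means that for each pair $(i,j)$ the entry $m_{ij} = \alpha_i \cdot \beta_j$ of $M(H)$ satisfies $|m_{ij}| = |\alpha_i \cap \beta_j|$, and in particular $m_{ij} = 0$ exactly when $\alpha_i \cap \beta_j = \emptyset$. Hence the number of $\beta$ curves met by $\alpha_i$ equals the number of nonzero entries in the $i$-th row of $M(H)$, and symmetrically for $\beta_j$ and columns. Next, since $H$ is strong and coherent, Proposition \ref{p: polya} yields that $M(H)$ is a P\'olya matrix; note also that this property is preserved under transpose, since both the determinant and the permanent of a matrix are invariant under transpose.

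It then suffices to apply Corollary 7.8 of \cite{rst:polya} to $M(H)$ and to $M(H)^T$ in turn to obtain a row and a column of $M(H)$ with at most three nonzero entries. Unpacking the coherence dictionary above, these correspond respectively to an $\alpha$ curve meeting at most three $\beta$ curves and a $\beta$ curve meeting at most three $\alpha$ curves, as claimed.

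The only substantive step is the appeal to the McCuaig--Robertson--Seymour--Thomas structure theorem; all of the depth of the argument is imported from \cite{rst:polya}. The rest is the dictionary between curves on $S$ and entries of $M(H)$ supplied by coherence, which itself rests on the fact (Proposition \ref{p: 1-extendible} and Lemma \ref{l: all coherent}) that $1$-extendibility forces all intersections to be coherent. No further geometric input or case analysis is needed.
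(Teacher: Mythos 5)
Your proposal is correct and matches the paper's approach: the paper offers no written argument beyond asserting that \cite[Corollary 7.8]{rst:polya} immediately implies the statement, and your write-up simply makes explicit the intended dictionary (Lemma \ref{l: all coherent} for coherence, Proposition \ref{p: polya} to see $M(H)$ is a P\'olya matrix, transpose invariance, and the translation between nonzero entries of $M(H)$ and pairs of intersecting curves). Nothing is done differently in substance, so there is nothing further to compare.
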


It is possible that Propositions \ref{p: weakly reducible} and \ref{p: atmost3} could be useful towards the classification of strong, 1-extendible Heegaard diagrams of higher genus.

\section{Questions for future research} \label{sec: questions}

We conclude with a compilation of questions that may interest other researchers, particularly those with expertise in the theory of Heegaard splittings. (See also Questions \ref{q: stronggenus}, \ref{q: graph}, \ref{q: simpleknot}, and \ref{q: strong-formal}, above.)

\begin{question} \label{q: strongmoves1}
\textup{What can be said about the set of strong Heegaard diagrams for a given strong L-space? Is there a finite collection of moves on strong diagrams that can interpolate between any two given strong diagrams for the same strong L-space, and if so, can we arrange that the genus change monotonically in such sequences? As a specific example, the branched double cover of a two-bridge link is a lens space, and one can try to relate the large-genus strong diagram produced by the first author \cite{GreeneSpanning} to the standard genus-$1$ diagram.}
\end{question}

\begin{question} \label{q: strongmoves2}
\textup{More generally, is there a finite collection of moves on Heegaard diagrams that can interpolate between any two given diagrams for the same 3-manifold such that $\abs{\SS(H)}$ remains bounded in some way? Such a collection might yield an algorithm that can detect whether a given Heegaard diagram presents a strong L-space. Past efforts to study 3-manifolds algorithmically via Heegaard diagrams (e.g. \cite{VolodinKuznetsovFomenko}) have focused on genus and the total number of intersection points in $\alpha \cap \beta$ as the measures of complexity that should be bounded.  We wonder if $\abs{\SS(H)}$ might be more useful for this purpose.}
\end{question}

\begin{question} \label{q: summands}
\textup{Is the simultaneous trajectory number multiplicative under connected sum?  A theorem of Haken implies that any Heegaard diagram for a reducible manifold can be transformed into a reducible diagram using isotopies and handleslides \cite{HakenSurfaces}.  Can this be accomplished without increasing the number of generators?  As a derivative of that question, are all summands of a strong L-space themselves strong L-spaces?  The K\"unneth formula for Heegaard Floer homology implies that the summands of a reducible L-space are themselves L-spaces.
}
\end{question}

\begin{question} \label{q: foliation}
\textup{
Is there an elementary proof that a strong L-space does not admit a co-orientable taut foliation?  Compare the discussion following Definition \ref{d: SLS}.
}
\end{question}


\begin{question}
\textup{If $H$ is a Heegaard diagram for a rational homology sphere and the differential vanishes on $\CF(H)$, does it follow that $H$ is a strong Heegaard diagram? Note that if the differential vanishes for some choice of analytic data, then it must vanish for all such choices.  However, the fact that it vanishes may rely on some non-trivial analysis of moduli spaces. A related question is whether a Heegaard diagram in which there are no non-negative domains of Whitney disks must be strong.  This assumption is stronger but purely combinatorial.}
\end{question}

\bibliographystyle{myalpha}
\bibliography{bibliography}

\end{document}